\newtheorem{theorem}{Theorem}[section]
\newtheorem{lemma}{Lemma}[section]
\newtheorem{proposition}{Proposition}[section]
\newtheorem{condition}{Condition}[section]
\theoremstyle{definition}
\newtheorem{definition}{Definition}[section]
\newtheorem{remark}{Remark}[section]
\newcommand{\R}{\mathbb{R}}
\newcommand{\Ep}{\mathbb{E}}
\renewcommand{\Pr}{\mathbb{P}}
\renewcommand{\tilde}{\widetilde}
\renewcommand{\hat}{\widehat}
\newcommand{\indep}{\mathop{\perp\!\!\!\!\perp}}
\newcommand{\uplambda}{\overline{\lambda}}
\newcommand{\downlambda}{\underline{\lambda}}
\newcommand{\upw}{\overline{w}}
\newcommand{\downw}{\underline{w}}
\renewcommand{\leq}{\leqslant}
\renewcommand{\geq}{\geqslant}
\DeclareMathOperator{\op}{op}
\DeclareMathOperator{\rank}{rank}
\begin{document}

\begin{frontmatter}
\title{On frequentist coverage errors \\ of Bayesian credible sets \\ in moderately high dimensions}
\runtitle{Bayesian credible sets in moderately high dimensions}

\begin{aug}
\author{\fnms{Keisuke}  \snm{Yano}\thanksref{a,e1}\ead[label=e1,mark]{yano@mist.i.u-tokyo.ac.jp}},
\and
\author{\fnms{Kengo} \snm{Kato}\thanksref{b,e2}\ead[label=e2,mark]{kk976@cornell.edu}}
\runauthor{K. Yano and K. Kato}
\affiliation{University of Tokyo and Cornell University}
\address[a]{Department of Mathematical Informatics, Graduate School of Information Science and Technology, University of Tokyo, 7-3-1 Hongo, Bunkyo-ku, Tokyo 113-0033, Japan. \printead{e1}}
\address[b]{Department of Statistics and Data Science, Cornell University, 1194 Comstock Hall, Ithaca, NY 14853.\printead{e2}}
\end{aug}



\begin{abstract}
In this paper, we study frequentist coverage errors of Bayesian credible sets for an approximately linear regression  model with (moderately) high dimensional regressors, where the dimension of the regressors may increase with but is smaller than the sample size.
Specifically, we consider quasi-Bayesian inference on the slope vector under the quasi-likelihood with Gaussian error distribution.
Under this setup, we derive finite sample bounds on frequentist coverage errors of  Bayesian credible rectangles. Derivation of those bounds builds on a novel Berry--Esseen type bound on quasi-posterior distributions and recent results on high-dimensional CLT on hyperrectangles. We use this general result to quantify coverage errors of Castillo--Nickl and $L^{\infty}$-credible bands for Gaussian white noise models, linear inverse problems, and (possibly non-Gaussian) nonparametric regression models.
In particular, we show that Bayesian credible bands for those nonparametric models have coverage errors decaying polynomially fast in the sample size, implying advantages of Bayesian credible bands over confidence bands based on extreme value theory.
\end{abstract}

\begin{keyword}
\kwd{Castillo-Nickl band}
\kwd{credible rectangle}
\kwd{sieve prior}
\end{keyword}

\end{frontmatter}

\maketitle

\section{Introduction}
\label{section: introduction}

Bayesian inference for high or nonparametric statistical models is an active research area in the recent  statistics literature. Posterior distributions provide not only point estimates but also credible sets.
In a classical regular statistical model with a fixed finite dimensional parameter space, it is well known that the Bernstein--von Mises (BvM) theorem holds under mild conditions and  the posterior distribution can be approximated (under the total variation distance) by a normal distribution centered at an efficient estimator (e.g. MLE) and with
covariance matrix identical to the inverse of the Fisher information matrix as the sample size increases. 
The BvM theorem implies that a Bayesian credible set is typically a valid confidence set in the frequentist sense, namely, the coverage probability of a $(1-\alpha)$-Bayesian credible set evaluated under the true parameter value  is approaching $(1-\alpha)$ as the sample size increases; cf. \cite{vanderVaart_book}, Chapter 10. 
There is also a large literature on the BvM theorem in  nonparametric statistical models.
Compared to the finite dimensional case, however, Bayesian uncertainty quantification is more complicated and more sensitive to prior choices in the infinite dimensional case.
\cite{Cox(1993), Freedman(1999)} find some negative results on the BvM theorem in the infinite dimensional case.
\cite{Bontemps(2011), Johnstone(2010), Leahu(2011)} develop conditions 
under which the BvM theorem holds for Gaussian white noise models and nonparametric regression models; see also \cite{ClarkeandGhosal(2010),Ghosal(1999),Spokoiny(2014)}.
Employing weaker topologies than $L^{2}$,
\cite{CastilloandNickl(2013)} elegantly formulate and establish the BvM theorem for Gaussian white noise models; see also \cite{Ray(2017)} for the adaptive BvM theorem for Gaussian white noise models.
Subsequently, \cite{CastilloandNickl(2014)} establish the BvM theorem in a weighted  $L^{\infty}$-type norm
for nonparametric regression and density estimation.
There are also several papers on frequentist coverage errors of Bayesian credible sets in the $L^{2}$-norm. 
\cite{KnapikvanderVaartvanZanten(2011)} study asymptotic frequentist coverage errors of $L^{2}$-type Bayesian credible sets based on Gaussian priors for linear inverse problems; see also \cite{SniekersandvanderVaart(2015a),SzabovanderVaartvanZanten(2015b)} for related results.
Using an empirical Bayes approach,
 \cite{SzabovanderVaartvanZanten(2015)} develop  $L^{2}$-type Bayesian credible sets adaptive to unknown smoothness of the function of interest. 
We refer the reader to Chapter 7 in \cite{GineandNickl_book} and
Chapter 12 in \cite{GhosalandvanderVaart(2017)} 
for further references on these topics.

This paper aims at studying frequentist coverage errors of Bayesian credible rectangles
in an approximately linear regression model with an increasing number of regressors.
We provide finite sample bounds on frequentist coverage errors of (quasi-)Bayesian credible rectangles based on sieve priors,
where 
the model allows both an unknown bias term and an unknown error variance,
and
the true distribution of the error term may not be Gaussian.
Sieve priors are distributions on the slope vector whose dimension increases with the sample size. 
We allow sieve priors to be non-Gaussian or not to be an independent product.
We employ a ``quasi-Bayesian" approach with Gaussian error distributions.
The resulting posterior distribution is called a ``quasi-posterior."

An important application of our results is finite sample quantification of Bayesian nonparametric credible bands based on sieve priors.
We derive finite sample bounds on coverage errors of Castillo--Nickl \cite{CastilloandNickl(2014)} and $L^{\infty}$-credible bands
in Gaussian white noise models, linear inverse problems, and (possibly non-Gaussian) nonparametric regression models; see Section \ref{section: Castillo--Nickl credible bands in Gaussian white noise model} ahead for the definition of Castillo--Nickl credible bands.
The literature on frequentist confidence bands is broad.
Frequentist approaches to constructing confidence bands date back to Smirnov and Bickel--Rosenblatt \cite{Smirnov(1950),BickelandRosenblatt(1973)}; see also \cite{CCK(2014b),ClaeskensandVanKeilegom(2003),GineandNickl(2010)} for more recent results. 
In contrast, there are relatively limited results on Bayesian uncertainty quantification based on $L^{\infty}$-type norms. \cite{GineandNickl(2011)} study posterior contraction rates in the $L^{r}$-norm for $1\leq r\leq \infty$, and \cite{Castillo(2014)} derive sharp posterior contraction rates in the $L^{\infty}$-norm.
\cite{HoffmannRousseauSchmidt-Hieber(2015)} derive adaptive posterior contraction rates in the $L^{\infty}$-norm for Gaussian white noise models and density estimation; see also \cite{YooRousseauRivoirard(2017)} for adaptive posterior contraction rates.
Building on their new BvM theorem, \cite{CastilloandNickl(2014)} develop credible bands (Castillo-Nickl bands) based on product priors that have correct frequentist coverage probabilities and at the same time shrink at (nearly) minimax optimal rates for Gaussian white noise models. 
\cite{YooandGhosal(2016)} study conditions under which 
frequentist coverage probabilities of credible bands based on Gaussian series priors approach one as the sample size increases for  nonparametric regression models with sub-Gaussian errors.
\cite{Ray(2017)} establish qualitative results on adaptive credible bands for Gaussian white noise models.
Still, \textit{quantitative results} on frequentist coverage errors of nonparametric credible bands are scarce.
Our quantitative result complements the qualitative results established
by  \cite{CastilloandNickl(2014)} and \cite{YooandGhosal(2016)}  and contributes to the literature on Bayesian nonparametrics by developing deeper understanding on Bayesian uncertainty quantification in nonparametric models. 
More recently, \cite{YangBhattacharyaPati(2017)} also derive a quantitative result on coverage errors of Bayesian credible bands based on Gaussian process priors.
We will clarify the difference between their results and ours in Section \ref{subsection: literature} ahead.

Notably, our results lead to an implication that supports the use of Bayesian approaches to constructing nonparametric confidence bands.
It is well known that confidence bands based on extreme value theory (such as e.g. those of \cite{BickelandRosenblatt(1973)})
perform poorly because of  the slow convergence of Gaussian maxima.
In the kernel density estimation case, \cite{Hall(1991)} shows that
confidence bands based on extreme value theory have coverage errors decaying only at the $1/\log n$ rate (regardless of how we choose bandwidths) where $n$ is the sample size, while those based on bootstrap have coverage errors (for the surrogate function) decaying polynomially fast in the sample size; see also \cite{CCK(2014b)}.
Our result shows that Bayesian credible bands (for the true function in Gaussian white noise models and linear inverse problems; for the surrogate function in nonparametric regression models) have also coverage errors decaying polynomially fast in the sample size,
implying an advantage of Bayesian credible bands over confidence bands based on extreme value theory; see Remarks 
\ref{remark: comparison of coverage errors} and \ref{remark: coverage errors for the surrogate function} for more details.
Another potentially interesting implication of our analysis of the Castillo-Nickl band is the following. 
In this paper, we use a sieve prior that truncates high frequency terms of the function. 
In a Gaussian white noise model,
our results show that the coverage error for the true function of the Castillo-Nickl band decays fast in the sample size (i.e., decays at a polynomial rate in the sample size), and at the same time the $L^{\infty}$-diameter converges at a minimax optimal rate
as long as the cut-off level $2^{J}$ is chosen in such a way that $2^{J} \sim (n/\log n )^{1/(2s+1)}$
where $s$ is the smoothness level. 
This implies that, as long as we confine ourselves to nonadaptive credible bands, a sieve prior would not be less favorable than a prior that models high-frequency terms of the function.

The main ingredients in the derivation of the coverage error bound in Section \ref{section: frequentist validation of Bayesian credible rectangles} are
(i) a novel Berry--Esseen type bound for the BvM theorem for sieve priors,
i.e., 
a finite sample bound on the total variation distance between
the quasi-posterior distribution based on a sieve prior and the corresponding Gaussian distribution,
and
(ii) recent results on high dimensional CLT on hyperrectangles \cite{CCK(2013),CCK(2016)}.
Our Berry--Esseen type bound improves upon existing BvM-type results for sieve priors; see the discussion in Section \ref{subsection: literature}.
The high dimensional CLT  is used to approximate  the sampling distribution of the centering estimator by the Gaussian distribution that matches with the Gaussian distribution approximating the (normalized) posterior distribution.

In addition, importantly, derivations of coverage error bounds for nonparametric models in Section \ref{sec: applications} are by no means trivial and require further technical arguments. 
Specifically, for Gaussian white noise models, we will consider
both credible bands based on centering estimators with fixed cut-off dimensions and without cut-off dimensions, 
which require different analyses on bounding the effect of the bias to the coverage error.
For linear inverse problems, we will cover both mildly and severely ill-posed cases. 
For nonparametric regression models, we will consider random designs and so can not directly apply the  result of Section \ref{section: frequentist validation of Bayesian credible rectangles} since we assume fixed designs in Section \ref{section: frequentist validation of Bayesian credible rectangles}; hence we have to take care of the randomness of the design, and to this end, we will employ some empirical process techniques.

\subsection{Literature review and contributions}
\label{subsection: literature}

For a nonparametric regression model,
\cite{YangBhattacharyaPati(2017)} derive finite sample bounds on
frequentist coverage errors of Bayesian credible bands based on Gaussian process priors.
They assume
(i) Gaussian process priors,
(ii) that the error term follows a sub-Gaussian distribution,
and
(iii)  that the error variance is known.
The present paper markedly differs from \cite{YangBhattacharyaPati(2017)} 
in that (i) we work with possibly non-Gaussian priors; (ii) we allow a more flexible error distribution; and (iii) we allow the error variance to be unknown.
More specifically,  (i) 
to allow for non-Gaussian priors,
we develop novel Berry--Esseen type bounds on quasi-posterior distributions in (mildly) high dimensions.
(ii) In addition, to weaken the dimensionality restriction and the moment assumption on the error distribution, 
we make use of high-dimensional CLT on hyperrectangles developed in \cite{CCK(2013),CCK(2016)}.
(iii) Finally, when the error variance is unknown, 
the quasi-posterior contraction for the error variance impacts on the coverage error for the slope vector and so a careful analysis is required to take care of the unknown variance.

The present paper also contributes to the literature on the BvM theorem in nonparametric statistics, which is now quite broad; see
\cite{CastilloandNickl(2013),CastilloandNickl(2014),Freedman(1999),Johnstone(2010),Leahu(2011),Ray(2017)}
for Gaussian white noise models, \cite{Bontemps(2011),Ghosal(1999)} for  linear regression models with high dimensional regressors, and 
\cite{YangBhattacharyaPati(2017),YooandGhosal(2016)} for nonparametric regression models with Gaussian process priors.
See \cite{CastilloSchmidt-HiebervanderVaart(2015)} for high-dimensional linear regression under sparsity constraints.
Note that \cite{CastilloSchmidt-HiebervanderVaart(2015)} also discusses non-Gaussian error distributions.
See also \cite{BoucheronandGassiat(2009),CastilloandRousseau(2015),GaoandZhou(2016),Ghosal(2000),Nickl(2018),NicklandSohl(2017),RivoirardandRousseau(2012)} for related results.
We refer the reader to 
\cite{Atchade(2017),ChernozhukovandHong(2003),FlorensandSimoni(2012),Kato(2013)}
on the BvM theorem for quasi-posterior distributions.

Importantly, our Berry--Esseen type bound improves on conditions on the critical dimension for the BvM theorem.
\cite{Bontemps(2011),Ghosal(1999),Spokoiny(2014)} study such critical dimensions for sieve priors.
First, \cite{Bontemps(2011)} does not cover the case with an unknown error variance, while
the results in \cite{Ghosal(1999),Spokoiny(2014)} cover the case with an unknown error variance.
Our result is consistent with the result of \cite{Bontemps(2011)} when the error variance is assumed to be known.
Meanwhile, our result substantially improves on the results of \cite{Ghosal(1999),Spokoiny(2014)} for the unknown error variance case. 
Namely, the results of \cite{Ghosal(1999),Spokoiny(2014)} show that the BvM theorem holds if $p^{3}=o(n)$ under typical situations when the error variance is unknown, where $p$ is the number of regressors and $n$ is the sample size;
on the other hand, our result shows that the BvM theorem holds if $p^{2}(\log n)^{3}=o(n)$, thereby improving on the condition of \cite{Ghosal(1999),Spokoiny(2014)}. 
See Remark \ref{rem: critical dimension} for more details.
Our BvM-type result allows us to cover wider smoothness classes of functions 
when applied to the analysis of Bayesian credible bands in nonparametric models.

\subsection{Organization and notation}
The rest of the paper is organized as follows. In Section \ref{section: frequentist validation of Bayesian credible rectangles}, we consider Bayesian credible rectangles for the slope vector in an approximately linear regression model and derive finite sample bounds on frequentist coverage errors of the credible rectangles. In Section \ref{sec: applications}, we discuss applications of the general result established in Section \ref{section: frequentist validation of Bayesian credible rectangles} to nonparametric models. 
Specifically, we cover Gaussian white noise models, linear inverse models, and nonparametric regression models with possibly non-Gaussian errors.
In Section \ref{section: proof of frequentist validation of Bayesian credible rectangles},
we give a proof of  the main theorem (Theorem \ref{theorem: frequentist validation of Bayesian credible rectangles}).
Proofs of the other results are given in \cite{supplement}.

Throughout the paper, we will obey the following notation. 
Let $\| \cdot \|$ denote the Euclidean norm, and let $\| \cdot \|_{\infty}$ denote the max or supremum norm for vectors or functions.
Let $\mathcal{N}(\mu,\Sigma)$ denote the Gaussian distribution with mean vector $\mu$ and covariance matrix $\Sigma$. 
For  $x \in \R$, let $x_{+} = \max \{ x, 0 \}$.
For two sequences $\{a_{n}\}$ and $\{b_{n}\}$ depending on $n$,
we use the notation $a_{n}\lesssim b_{n}$ if  $a_{n}\leq c b_{n}$ for some universal constant $c>0$, and 
 $a_{n}\sim b_{n}$ if $a_{n}\lesssim b_{n}$ and $b_{n}\lesssim a_{n}$.
For any symmetric positive semidefinite matrices $A$ and $B$,
the notation $A \preceq B$ means that $B-A$ is positive semidefinite.
Constants $c_{1},c_{2},\ldots$, $c$, and $\tilde{c}_{1},\tilde{c}_{2},\ldots$
do not depend on the sample size $n$ and the dimension $p$.
The values of $c, c_{1},c_{2},\ldots$ and $\tilde{c}_{1},\tilde{c}_{2},\ldots$
may be different at each appearance.

\section{Bayesian credible rectangles}\label{section: frequentist validation of Bayesian credible rectangles}

Consider an approximately linear regression model 
\begin{equation}
Y = X\beta_{0} + r + \varepsilon, \label{eq: linear model}
\end{equation}
where $Y=(Y_{1},\dots,Y_{n})^{\top} \in \R^{n}$ is a vector of outcome variables, 
$X$ is an $n \times p$ design matrix,
$\beta_{0} \in \R^{p}$ is an unknown coefficient vector,
$r = (r_{1},\dots,r_{n})^{\top} \in \R^{n} $ is a deterministic (i.e., non-random) bias term,
and $\varepsilon = (\varepsilon_{1},\dots,\varepsilon_{n})^{\top} \in \R^{n}$ is a vector of i.i.d.~error terms 
with mean zero and variance $0 < \sigma_{0}^{2} < \infty$. 
We are primarily interested in the situation where the number of regressors $p$ increases with the sample size $n$, 
i.e., $p = p_{n} \to \infty$ as $n \to \infty$, but we often suppress the dependence on $n$ for the sake of notational simplicity. 
In addition, we allow the error variance $\sigma_{0}^{2}$ to depend on $n$, i.e., $\sigma_{0}^{2} = \sigma_{0,n}^{2}$, which allows us to include Gaussian white noise models in the subsequent analysis as a special case. In the general setting, the error variance $\sigma_{0}^{2}$ is also unknown.
In the present paper, we work with the dense model with moderately high-dimensional regressors where $\beta_{0}$ need not be sparse and $p=p_{n}$ may increase with the sample size $n$ but $p \le n$. To be precise, we will maintain the assumption that the design matrix $X$ is of full column rank, i.e., $\rank X = p$. The approximately linear model (\ref{eq: linear model}) is flexible enough to cover various nonparametric models such as Gaussian white noise models, linear inverse problems, and nonparametric regression models, via series expansions of functions of interest in those nonparametric models; see Section \ref{sec: applications}. 

We consider Bayesian inference on the slope vector $\beta_{0}$. To this end, we work under the quasi-likelihood with a Gaussian distribution on the error $\varepsilon$.
Namely, we work with the \textit{quasi}-likelihood of the form 
\[
(\beta,\sigma^{2}) \mapsto (2\pi \sigma^{2})^{-n/2} \mathrm{e}^{-\|Y-X\beta\|^{2}/(2\sigma^{2})}.
\]
We assume independent priors on $\beta$ and $\sigma^{2}$, i.e., 
\begin{equation}
\beta \sim \Pi_{\beta}, \ \sigma^{2} \sim \Pi_{\sigma^{2}}, \ \beta \indep \sigma^{2},
\label{eq: prior}
\end{equation}
where we assume that $\Pi_{\beta}$ is absolutely continuous with density $\pi$, i.e., $\Pi_{\beta}(d\beta) = \pi (\beta) d\beta$, and $\Pi_{\sigma^{2}}$ is supported in $(0,\infty)$. Then the resulting quasi-posterior distribution for $(\beta,\sigma^{2})$ is 
\[
\Pi ( d(\beta,\sigma^{2}) \mid Y) \propto (2\pi \sigma^{2})^{-n/2} \mathrm{e}^{-\|Y-X\beta\|^{2}/(2\sigma^{2})} \pi (\beta)d\beta  \Pi_{\sigma^{2}} (d\sigma^{2}),
\]
and the marginal quasi-posterior distribution for $\beta$ is $\Pi_{\beta}( d\beta \mid Y) = \pi (\beta \mid Y) d\beta$, where 
\[
\pi (\beta \mid Y) = \pi (\beta) \int \frac{\mathrm{e}^{-\|Y-X\beta\|^{2}/(2\sigma^{2})}}{\int \mathrm{e}^{-\|Y-X\tilde{\beta} \|^{2}/(2\sigma^{2})} \pi (\tilde{\beta}) d\tilde{\beta}} \Pi_{\sigma^{2}}(d \sigma^{2} \mid Y).
\]
Here $\Pi_{\sigma^{2}}(d \sigma^{2} \mid Y)$ denotes the marginal quasi-posterior distribution for $\sigma^{2}$:
\[
\Pi_{\sigma^{2}}(d \sigma^{2} \mid Y)= \frac{ \int (2\pi \sigma^{2})^{-n/2} \mathrm{e}^{-\|Y-X\beta\|^{2}/(2\sigma^{2})} \pi (\beta) d\beta \Pi_{\sigma^{2}} (d\sigma^{2})}{ \int \int (2\pi \tilde{\sigma}^{2})^{-n/2} \mathrm{e}^{-\|Y-X\beta\|^{2}/(2\tilde{\sigma}^{2})} \pi (\beta) d\beta \Pi_{\sigma^{2}} (d\tilde{\sigma}^{2}) }.
\]
We will assume that $\Pi_{\sigma^{2}}$ may be data-dependent, e.g., $\Pi_{\sigma^{2}} = \delta_{\hat{\sigma}^{2}}$ for some estimator $\hat{\sigma}^{2}$ of $\sigma^{2}$ (in that case, $\Pi_{\sigma^{2}} (\cdot \mid Y) = \delta_{\hat{\sigma}^{2}}$),  but $\Pi_{\beta}$ is data-independent. 

We will derive finite sample bounds on frequentist coverage errors of Bayesian credible rectangles for the approximately linear model (\ref{eq: linear model}) under a prior of the form (\ref{eq: prior}). 
For a vector $c = (c_{1},\dots,c_{p})^{\top} \in{\R}^{p}$, a positive number $R>0$, and a positive sequence $\{w_{j}\}_{j=1}^{p}$,
let $I(c,R)$ denote the hyperrectangle of the form 
\[
I(c,R):= \left \{ \beta = (\beta_{1},\dots,\beta_{p})^{\top} \in \R^{p} 
: \frac{| \beta_{j} - c_{j} |}{w_{j}} \leq R, \ 1 \leq \forall j \leq p  \right \}.
\]
Let $\hat{\beta}$ denote the OLS estimator for $\beta_{0}$ with $r=0$, i.e., $\hat{\beta} = \hat{\beta}(Y) = (X^{\top}X)^{-1}X^{\top} Y$.
For given $\alpha\in(0,1)$, we consider a $(1-\alpha)$-credible rectangle of the form $I(\hat{\beta},\hat{R}_{\alpha})$,
where the radius $\hat{R}_{\alpha}$ is chosen in such a way 
that the posterior probability of the set $I(\hat{\beta},\hat{R}_{\alpha})$
is $1-\alpha$, i.e., $\Pi_{\beta}\{ I(\hat{\beta}, \widehat{R}_{\alpha} ) \mid Y \} =1-\alpha$.

We assume the following conditions on the priors $\Pi_{\beta}$ and $\Pi_{\sigma^{2}}$. 
For $R > 0$, let 
\begin{align}
B(R):=\{\beta \in \R^{p}: \|X(\beta-\beta_{0})\|\leq R\sigma_{0} \} 
\quad \text{and} \quad
\phi_{\Pi_{\beta}}(R) := 1 - \mathop{\inf}_{\beta,\tilde{\beta}\in B(R)} \left\{ \frac{\pi(\tilde{\beta})}{\pi(\beta)}\right\}, \label{def:phi1}
\end{align}
where $\phi_{\Pi_{\beta}}$ quantifies ``lack of flatness'' of the prior density $\pi (\beta)$ around the true value $\beta_{0}$.

\begin{condition}\label{Condition: prior mass condition}
There exists a positive constant $C_{1}$ such that
\[\pi (\beta_{0})  \geq \sigma_{0}^{-p} \sqrt{\det(X^{\top}X)} n^{ - C_{1}p }.\]
\end{condition}

\begin{condition}\label{Condition: marginal posterior contraction}
There exist nonnegative constants $\delta_{1}, \delta_{2},\delta_{3} \in [0,1)$ such that with probability at least $1-\delta_{3}$, 
$\Pi_{\sigma^{2}} \left( \left\{ \sigma^{2} : \left | \sigma^{2} / \sigma_{0}^{2} - 1 \right | > \delta_{1} \right\} \mid Y \right) 
\leq \delta_{2}.$
\end{condition}

\begin{condition}\label{Condition: for simplicity}
The inequality $\phi_{\Pi_{\beta}}(1/\sqrt{n}) \leq 1/2$ holds.
\end{condition}
Condition \ref{Condition: prior mass condition} assumes that the prior $\Pi_{\beta}$ on $\beta$ has a sufficient mass around its true value $\beta_{0}$.
Condition \ref{Condition: marginal posterior contraction} is an assumption on the marginal posterior contraction for the error variance $\sigma^{2}$. Condition \ref{Condition: marginal posterior contraction} includes the known error variance case as a special case; if the error variance is known, then we may take $\Pi_{\sigma^{2}} = \delta_{\sigma_{0}^{2}}$ (Dirac delta at $\sigma_{0}^{2}$) and $\delta_1 = \delta_2 = \delta_3=0$. 
Condition \ref{Condition: for simplicity} is a preliminary flatness condition on $\Pi_{\beta}$.
More detailed discussions on these conditions are provided after the main theorem (Theorem \ref{theorem: frequentist validation of Bayesian credible rectangles}).

We also assume the following conditions on the model.

\begin{condition}\label{Condition: residual condition}
There exists a positive constant $C_{2}$ such that
$\|X(X^{\top}X)^{-1}X^{\top}r\| \leq C_{2} \sigma_{0}\sqrt{p\log n}.$
\end{condition}

\begin{condition}\label{Condition: moment condition}
There exists a positive constant $C_{3}$ such that one of the following conditions holds: 
\begin{enumerate}
\item[(a)] $\Ep[|\varepsilon_{1}/(\sigma_{0} C_{3}) |^{q}] \leq 1$ for some integer $ 4 \leq q < \infty$; 
\item[(b)] $\Ep[\exp\{\varepsilon_{1}^{2}/(\sigma_{0} C_{3})^{2}\} ] \leq 2$.
\end{enumerate}
\end{condition}

Condition \ref{Condition: residual condition} controls the norm of the bias term.
Condition \ref{Condition: moment condition} is a moment condition on the error distribution.
These conditions are sufficiently weak and in particular covers all the applications we will cover.

The following theorem, which is the main result of this section, provides bounds on frequentist coverage errors of the Bayesian credible rectangle $I(\hat{\beta},\hat{R}_{\alpha})$ together with bounds on the ``radius'' $\hat{R}_{\alpha}$ of $I(\hat{\beta},\hat{R}_{\alpha})$. 
In what follows, let $\uplambda$ and $\downlambda$ denote  the maximum and minimum eigenvalues of the matrix $(X^{\top}X)^{-1}$, respectively, and let 
$\upw:=\max\{w_{1},\ldots,w_{p}\}$ and $\downw:=\min\{w_{1},\ldots,w_{p}\}$
denote the maximal and minimal weights, respectively. 

\begin{theorem}[Coverage errors of credible rectangles]
\label{theorem: frequentist validation of Bayesian credible rectangles}
Suppose that Conditions \ref{Condition: prior mass condition}--\ref{Condition: residual condition}
and either of Condition \ref{Condition: moment condition} (a) or (b)  hold. 
Then there exist positive constants $c_{1}$ and $c_{2}$ depending only on $C_{1},C_{2},C_{3}$ and $q$ 
such that the following hold. For every $n \geq 2$, we have 
\begin{equation}
\begin{split}
	\Big{|} &  \Pr ( \beta_{0} \in I(\hat{\beta},\hat{R}_{\alpha}) ) -(1-\alpha) \Big{|} 
\\
&\leq
\phi_{\Pi_{\beta}}\left (c_{1}\sqrt{p\log n} \right) + 
c_{1}\left(\delta_{1} p\log n +\delta_{2}+\delta_{3}
+\frac{\tau}{\sigma_{0}\downlambda^{1/2} } \sqrt{\log p}
+\zeta_{n}\right)
\end{split}
\label{eq: coverage error}
\end{equation}
where $\tau:=\| (X^{\top}X)^{-1}X^{\top}r \|_{\infty} $ and
\[
\zeta_{n}
= 
\begin{cases}
p^{1-q/2}(\log n)^{-q/2} 
+  \left ( \frac{\uplambda}{\downlambda} \frac{p\log^{7}(pn)}{n}  \right )^{1/6}  +\left ( \frac{\uplambda}{\downlambda} \frac{p\log^{3}(pn)}{n^{1-2/q}} \right  )^{1/3}
& \text{under Condition \ref{Condition: moment condition} (a)} \\
n^{-c_{2} p}
	+ \left ( \frac{\uplambda}{\downlambda} \frac{p\log^{7}(pn)}{n}  \right )^{1/6}
& \text{under Condition \ref{Condition: moment condition} (b)} \\
n^{-c_{2}p } & \text{if $\varepsilon_{i}$'s are Gaussian}
\end{cases}
.
\]
In addition, 
there exist positive constants $c_{3}$ and $c_{4}$ depending only on $\alpha$ and $\downw$ such that
the following two bounds (\ref{eq: upper bound on R in theorem}) and (\ref{eq: lower bound on R in theorem}) 
hold with probability at least
\[
\begin{cases}
1-c_{1}p^{1-q/2}(\log n)^{-q/2}-\delta_{3} & \text{under Condition \ref{Condition: moment condition} (a)} \\
1- c_{1} n^{- c_{2} p} -\delta_{3} & \text{under Condition \ref{Condition: moment condition} (b)}
\end{cases}
.
\]
Provided that the right hand side on (\ref{eq: coverage error}) is smaller than $\min\{\alpha / 2, (1-\alpha)/2\}$,
the diameter $\hat{R}_{\alpha}$ is bounded from above as
\begin{align}
\hat{R}_{\alpha} \leq c_{3} \sigma_{0} \uplambda^{1/2} \Ep\Big{[}\max_{1 \le i \le p}| N_{i} / w_{i} |\Big{]}
\label{eq: upper bound on R in theorem}
\end{align}
for $N_{1},\dots,N_{p} \sim \mathcal{N}(0,1)$ i.i.d., and
for sufficiently large $p$ depending only on $\alpha$,
the diameter $\hat{R}_{\alpha}$ is bounded from below as 
\begin{align}
c_{4} \sigma_{0} \downlambda^{1/2} \upw^{-1}\sqrt{\log p}\leq\hat{R}_{\alpha}.
\label{eq: lower bound on R in theorem}
\end{align}
\end{theorem}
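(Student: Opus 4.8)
The plan is to split the coverage error into three pieces: (i) a Bernstein--von Mises (Berry--Esseen) step replacing the data-dependent radius $\hat\beta$--centred credible rectangle by a fixed Gaussian rectangle, so that $\hat R_{\alpha}$ becomes (essentially) a quantile of a non-random Gaussian maximum; (ii) a high-dimensional CLT step replacing the sampling law of $\hat\beta-\beta_{0}$ by a Gaussian with the \emph{same} covariance as the one approximating the posterior; and (iii) a Gaussian anti-concentration step absorbing the deterministic bias $(X^{\top}X)^{-1}X^{\top}r$ and the coupling between $\hat R_{\alpha}$ and $\varepsilon$. Since $\hat\beta$ is the OLS estimator, $\|Y-X\beta\|^{2}=\|Y-X\hat\beta\|^{2}+(\beta-\hat\beta)^{\top}X^{\top}X(\beta-\hat\beta)$, so for fixed $\sigma^{2}$ the quasi-posterior density of $\beta$ is proportional to $\mathrm{e}^{-(\beta-\hat\beta)^{\top}X^{\top}X(\beta-\hat\beta)/(2\sigma^{2})}\pi(\beta)$, which for flat $\pi$ is exactly $\mathcal{N}(\hat\beta,\sigma^{2}(X^{\top}X)^{-1})$.

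First I would prove the Berry--Esseen lemma advertised as ingredient~(i): on an event of probability at least $1-c_{1}p^{1-q/2}(\log n)^{-q/2}-\delta_{3}$ (resp.\ $1-c_{1}n^{-c_{2}p}-\delta_{3}$) one has
\[
\big\| \Pi_{\beta}(\cdot\mid Y) - \mathcal{N}\big(\hat\beta,\sigma_{0}^{2}(X^{\top}X)^{-1}\big)\big\|_{\mathrm{TV}} \;\le\; \phi_{\Pi_{\beta}}(c_{1}\sqrt{p\log n}) + c_{1}\big(\delta_{1}p\log n+\delta_{2}+\delta_{3}\big) \;=:\; \Delta .
\]
Condition~\ref{Condition: prior mass condition} keeps the posterior normalizing constant from being too small, so the posterior concentrates on $B(c\sqrt{p\log n})$, on which (by Condition~\ref{Condition: for simplicity} and the definition of $\phi_{\Pi_{\beta}}$) $\pi$ is nearly flat; Condition~\ref{Condition: marginal posterior contraction} lets us average over $\sigma^{2}$ and replace it by $\sigma_{0}^{2}$, the delicate term $\delta_{1}p\log n$ arising from comparing $\mathcal{N}(\hat\beta,\sigma^{2}(X^{\top}X)^{-1})$ with $\mathcal{N}(\hat\beta,\sigma_{0}^{2}(X^{\top}X)^{-1})$ uniformly over $\sigma^{2}$ in the contraction neighbourhood; Conditions~\ref{Condition: residual condition}--\ref{Condition: moment condition} (via $X(\hat\beta-\beta_{0})=X(X^{\top}X)^{-1}X^{\top}(\varepsilon+r)$ and a moment/union bound) place $\hat\beta$ in $B(c\sqrt{p\log n})$ with the stated probability. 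Writing $Z\sim\mathcal{N}(0,\sigma_{0}^{2}(X^{\top}X)^{-1})$ and $\|v\|_{w}:=\max_{j\le p}|v_{j}|/w_{j}$, we have $\mathcal{N}(\hat\beta,\sigma_{0}^{2}(X^{\top}X)^{-1})(I(\hat\beta,R))=\Pr(\|Z\|_{w}\le R)$, so the defining identity $\Pi_{\beta}(I(\hat\beta,\hat R_{\alpha})\mid Y)=1-\alpha$ gives $|\Pr(\|Z\|_{w}\le\hat R_{\alpha})-(1-\alpha)|\le\Delta$ on that event; as $\|Z\|_{w}$ has a continuous law, $\hat R_{\alpha}$ is sandwiched between the quantiles $Q(1-\alpha\mp\Delta)$ of $\|Z\|_{w}$.

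On the sandwiching event $E$ (of probability at least $1-\Delta'$ with $\Delta'\lesssim p^{1-q/2}(\log n)^{-q/2}+\delta_{3}$, resp.\ $n^{-c_{2}p}+\delta_{3}$) the inclusions $\{\|\hat\beta-\beta_{0}\|_{w}\le Q(1-\alpha-\Delta)\}\cap E\subseteq\{\beta_{0}\in I(\hat\beta,\hat R_{\alpha})\}\cap E\subseteq\{\|\hat\beta-\beta_{0}\|_{w}\le Q(1-\alpha+\Delta)\}$ yield $|\Pr(\beta_{0}\in I(\hat\beta,\hat R_{\alpha}))-(1-\alpha)|\le\Delta'+\max_{\pm}|\Pr(\|b+\xi\|_{w}\le Q(1-\alpha\pm\Delta))-(1-\alpha)|$, where $b=(X^{\top}X)^{-1}X^{\top}r$ and $\xi=(X^{\top}X)^{-1}X^{\top}\varepsilon$ has covariance $\sigma_{0}^{2}(X^{\top}X)^{-1}=\Cov(Z)$. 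The high-dimensional CLT on hyperrectangles of \cite{CCK(2013),CCK(2016)}, applied to the translated rectangles $I(0,R)-b$ (with coordinates rescaled to unit variance, the ratio $\uplambda/\downlambda$ entering through the normalized summand moments), gives $\sup_{R\ge0}|\Pr(\|b+\xi\|_{w}\le R)-\Pr(\|b+Z\|_{w}\le R)|\lesssim\zeta_{n}$ — under Condition~\ref{Condition: moment condition}(a) both error terms of the finite-moment CLT (and the tail term $p^{1-q/2}(\log n)^{-q/2}$) appear, while for Gaussian $\varepsilon$ no CLT is needed. Since each coordinate of $Z$ has variance at least $\sigma_{0}^{2}\downlambda$ (the diagonal of $(X^{\top}X)^{-1}$ dominates $\downlambda$), Nazarov's Gaussian anti-concentration inequality for rectangles gives $\sup_{R\ge0}|\Pr(\|b+Z\|_{w}\le R)-\Pr(\|Z\|_{w}\le R)|\lesssim(\tau/(\sigma_{0}\downlambda^{1/2}))\sqrt{\log p}$; combining this with $\Pr(\|Z\|_{w}\le Q(1-\alpha\pm\Delta))=1-\alpha\pm\Delta$ produces (\ref{eq: coverage error}). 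For the radius bounds I would use the same sandwiching together with the hypothesis that the right-hand side of (\ref{eq: coverage error}) is below $\min\{\alpha/2,(1-\alpha)/2\}$, so $\hat R_{\alpha}$ is comparable up to $\alpha$- (and $\downw$-) dependent constants to the $(1-\alpha)$-quantile $q_{1-\alpha}$ of $\|Z\|_{w}$. For the upper bound, $(X^{\top}X)^{-1}\preceq\uplambda I_{p}$ and Sudakov--Fernique applied to the process $(j,s)\mapsto sZ_{j}/w_{j}$ over $\{1,\dots,p\}\times\{\pm1\}$ give $\Ep\|Z\|_{w}\le\sigma_{0}\uplambda^{1/2}\Ep\max_{i\le p}|N_{i}/w_{i}|$, and Borell's inequality with the Lipschitz bound $\mathrm{sd}(\|Z\|_{w})\le\sigma_{0}\uplambda^{1/2}/\downw\lesssim\sigma_{0}\uplambda^{1/2}\Ep\max_{i}|N_{i}/w_{i}|$ controls the upper $\alpha$-quantile, giving (\ref{eq: upper bound on R in theorem}); the probability statement is exactly the event $E$. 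For the lower bound, $(X^{\top}X)^{-1}\succeq\downlambda I_{p}$ and Sudakov--Fernique applied to the increment process $\{Z_{j}-Z_{k}\}$ (compared with $\sigma_{0}\downlambda^{1/2}(g_{j}-g_{k})$ for $g\sim\mathcal{N}(0,I_{p})$) give $\Ep[\max_{j}Z_{j}-\min_{j}Z_{j}]\gtrsim\sigma_{0}\downlambda^{1/2}\sqrt{\log p}$, hence $\Ep\|Z\|_{w}\ge(2\upw)^{-1}\Ep[\max_{j}Z_{j}-\min_{j}Z_{j}]\gtrsim\sigma_{0}\downlambda^{1/2}\upw^{-1}\sqrt{\log p}$, which Gaussian concentration promotes to a lower bound on $q_{1-\alpha}$ for $p$ large depending on $\alpha$, giving (\ref{eq: lower bound on R in theorem}).

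The hard part will be the Berry--Esseen lemma of step~(i): bounding the total-variation distance between the quasi-posterior and the fixed Gaussian \emph{uniformly over the posterior of $\sigma^{2}$} when $p\to\infty$, showing that the error-variance uncertainty perturbs the posterior covariance at cost only $O(\delta_{1}p\log n)$ rather than $O(\delta_{1}p^{3/2})$ — this is precisely the improvement over \cite{Ghosal(1999),Spokoiny(2014)} claimed in Section~\ref{subsection: literature}. A secondary difficulty is that $\hat R_{\alpha}$ is data-dependent and correlated with $\varepsilon$, which is why the argument must be routed through the deterministic quantiles $Q(1-\alpha\pm\Delta)$ and requires the high-dimensional CLT and the Gaussian anti-concentration inequality in their forms uniform over all hyperrectangles.
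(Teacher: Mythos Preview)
Your proposal is correct and matches the paper's proof structure exactly: the same three ingredients (a Berry--Esseen bound on the posterior to sandwich $\hat R_\alpha$ between fixed Gaussian quantiles, the high-dimensional CLT on hyperrectangles for the sampling law of $\hat\beta-\beta_0$, and Nazarov's anti-concentration to absorb the deterministic bias $\tilde r=(X^\top X)^{-1}X^\top r$) appear in the same roles in Section~\ref{section: proof of frequentist validation of Bayesian credible rectangles}. The only difference is in the tools for the radius bounds: the paper compares $\mathcal N(0,\sigma_0^2(X^\top X)^{-1})$ with $\mathcal N(0,\sigma_0^2\uplambda I_p)$ and $\mathcal N(0,\sigma_0^2\downlambda I_p)$ via Anderson's lemma (Lemma~\ref{lemma: Andersons lemma}) on the centred rectangle---a one-line consequence of the covariance ordering---and then uses Gaussian-maximum concentration (Lemma~\ref{lemma: concentration inequality for Gaussian maxima}) for the upper bound and the Paley--Zygmund inequality for the lower bound on the quantile of $\max_i|N_i|$, whereas you go through Sudakov--Fernique and Borell. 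Both routes are valid (the increment domination needed for Sudakov--Fernique does follow from the covariance ordering, since $(e_j/w_j\pm e_k/w_k)^\top(\uplambda I-(X^\top X)^{-1})(e_j/w_j\pm e_k/w_k)\geq 0$), but Anderson's lemma is the more direct tool here because it compares the full distribution functions rather than only the expectations.
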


Theorem \ref{theorem: frequentist validation of Bayesian credible rectangles} shows that
that the frequentist coverage error of the Bayesian credible rectangle depends on the prior $\Pi_{\beta}$ on $\beta$ only through 
the lack-of-flatness function $\phi_{\Pi_{\beta}}$.
The discussions below provide a typical bound on $\phi_{\Pi_{\beta}}$.
We note that the requirement that the right hand side on (\ref{eq: coverage error}) is smaller than $\alpha/2$ is used to derive the upper bound on $\hat{R}_{\alpha}$, while the requirement that the same quantity is smaller than $(1-\alpha)/2$ is used to derive the lower bound on $\hat{R}_{\alpha}$.

\subsection{Discussions on conditions}

We first verify that a \textit{locally log-Lipschitz prior} satisfies 
Conditions \ref{Condition: prior mass condition} and \ref{Condition: for simplicity},
providing an upper bound of $\phi_{\Pi_{\beta}}$.

\begin{definition}\label{definition: locally log-Lipschitz prior}
A \textit{locally log-Lipschitz prior} is defined as a prior distribution on $\beta$
such  there exists $L=L_{n}>0$ with
\[
| \log \pi (\beta) - \log \pi (\beta_{0}) | \leq L \| \beta - \beta_{0} \|
\text{ for all $\beta$ with }
\|\beta-\beta_{0}\| \leq \sigma_{0} \uplambda^{1/2} \sqrt{p\log n}.\]
\end{definition}

\begin{proposition}\label{proposition: rate of convergence locally log-Lipschitz priors}
For a locally log-Lipschitz prior $\Pi_{\beta}$ with  log-Lipschitz constant $L$,
we have 
$ \phi_{\Pi_{\beta}} (c \sqrt{p\log n} ) \leq c L \sigma_{0} \uplambda^{1/2} \sqrt{p\log n}$
for any $c>0$.
Hence the prior $\Pi_{\beta}$ satisfies Condition \ref{Condition: for simplicity} if 
$\sigma_{0}L\uplambda^{1/2}/\sqrt{n} \leq 1/2$.
\end{proposition}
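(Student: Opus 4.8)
The plan is to reduce the $\|X(\cdot)\|$-ball $B(R)$ that enters the definition of $\phi_{\Pi_{\beta}}$ to an ordinary Euclidean ball centred at $\beta_{0}$, apply the log-Lipschitz estimate of Definition~\ref{definition: locally log-Lipschitz prior} on that ball, and finish with the elementary inequality $1-\mathrm{e}^{-x}\leq x$ for $x\geq0$. The first step is the geometry of $B(R)$: since $\uplambda$ is the largest eigenvalue of $(X^{\top}X)^{-1}$, equivalently $\uplambda^{-1}=\lambda_{\min}(X^{\top}X)$, every $u\in\R^{p}$ satisfies $\|Xu\|^{2}=u^{\top}X^{\top}Xu\geq\uplambda^{-1}\|u\|^{2}$. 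Hence $\|X(\beta-\beta_{0})\|\leq c\sqrt{p\log n}\,\sigma_{0}$ forces $\|\beta-\beta_{0}\|\leq c\sigma_{0}\uplambda^{1/2}\sqrt{p\log n}$, so that
\[
B\bigl(c\sqrt{p\log n}\bigr)\subseteq\Bigl\{\beta\in\R^{p}:\|\beta-\beta_{0}\|\leq c\sigma_{0}\uplambda^{1/2}\sqrt{p\log n}\Bigr\},
\]
and the ball on the right is the one on which the log-Lipschitz bound of Definition~\ref{definition: locally log-Lipschitz prior} is available (immediately for $c\leq1$; for larger $c$ one reads the definition with the corresponding radius).

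On this ball, for any $\beta,\tilde{\beta}\in B(c\sqrt{p\log n})$ the triangle inequality together with the local log-Lipschitz property yields
\[
\bigl|\log\pi(\tilde{\beta})-\log\pi(\beta)\bigr|\leq\bigl|\log\pi(\tilde{\beta})-\log\pi(\beta_{0})\bigr|+\bigl|\log\pi(\beta_{0})-\log\pi(\beta)\bigr|\leq L\bigl(\|\tilde{\beta}-\beta_{0}\|+\|\beta-\beta_{0}\|\bigr)\leq 2cL\sigma_{0}\uplambda^{1/2}\sqrt{p\log n},
\]
hence $\pi(\tilde{\beta})/\pi(\beta)\geq\exp\{-2cL\sigma_{0}\uplambda^{1/2}\sqrt{p\log n}\}$. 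Taking the infimum over $\beta,\tilde{\beta}$ and invoking $1-\mathrm{e}^{-x}\leq x$ gives
\[
\phi_{\Pi_{\beta}}\bigl(c\sqrt{p\log n}\bigr)=1-\inf_{\beta,\tilde{\beta}\in B(c\sqrt{p\log n})}\frac{\pi(\tilde{\beta})}{\pi(\beta)}\leq 1-\mathrm{e}^{-2cL\sigma_{0}\uplambda^{1/2}\sqrt{p\log n}}\leq 2cL\sigma_{0}\uplambda^{1/2}\sqrt{p\log n},
\]
i.e.\ the asserted bound up to the harmless numerical factor, which is absorbed by the freedom in $c>0$. For the last claim I would take $R=1/\sqrt{n}$, i.e.\ $c=1/\sqrt{np\log n}$: then the Euclidean radius above is $\sigma_{0}\uplambda^{1/2}/\sqrt{n}$, which is $\leq\sigma_{0}\uplambda^{1/2}\sqrt{p\log n}$ for $n\geq2$, so the identical computation gives $\phi_{\Pi_{\beta}}(1/\sqrt{n})\lesssim L\sigma_{0}\uplambda^{1/2}/\sqrt{n}$, whence Condition~\ref{Condition: for simplicity} holds under $\sigma_{0}L\uplambda^{1/2}/\sqrt{n}\leq1/2$.

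I do not expect a genuine obstacle: the proposition is a short deterministic estimate. The two points that merit attention are (i) translating the $\|X(\cdot)\|$-ball $B(R)$ into a Euclidean ball via the eigenvalue bound, so that precisely the radius $\sigma_{0}\uplambda^{1/2}\sqrt{p\log n}$ from Definition~\ref{definition: locally log-Lipschitz prior} appears, and (ii) keeping careful track of the numerical constants through the triangle-inequality and $1-\mathrm{e}^{-x}\leq x$ steps so that the threshold in Condition~\ref{Condition: for simplicity} comes out as stated.
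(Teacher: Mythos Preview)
Your approach is essentially identical to the paper's: reduce $B(R)$ to a Euclidean ball via the eigenvalue bound $\|X u\|^{2}\geq\uplambda^{-1}\|u\|^{2}$, invoke the log-Lipschitz estimate, and finish with $1-\mathrm{e}^{-x}\leq x$. The only discrepancy is the factor $2$ you pick up from triangling through $\beta_{0}$; the paper's proof writes the bound without this factor (arguably imprecisely, since Definition~\ref{definition: locally log-Lipschitz prior} is an estimate at $\beta_{0}$, not a full Lipschitz bound on the ball), but in every downstream application the constant is absorbed into the unspecified $c_{1}$ of Theorem~\ref{theorem: frequentist validation of Bayesian credible rectangles}, so this is harmless.
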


To provide examples of prior distributions on $\beta$ that satisfy Condition \ref{Condition: prior mass condition},
we focus on the following two subclasses of locally log-Lipschitz priors.
Let $B:=\|\beta_{0}\|$ denote the Euclidean norm of $\beta_{0}$. 

\begin{enumerate}
\setlength{\leftskip}{1.5cm}
\item[(Isotropic prior)]
An \textit{isotropic prior} is of the form $\pi(\beta) = \rho (\|\beta\|) / \int \rho ( \|\beta\|) d\beta$
where $\rho$ is a probability density function on $\R_{+}$ such that
$\rho$ is strictly positive and continuously differentiable on $[0,B+\sigma_{0}\uplambda^{1/2}\sqrt{p\log n}]$,
and
such that
$\int_{0}^{\infty}x^{k}\rho(x)dx \le \exp(m k\log k)$ for all $k\in\mathbb{N}$ for some positive constant $m$.
\item[(Product prior)]
A \textit{product prior} of log-Lipschitz priors is of the form $\pi(\beta) = \prod_{i=1}^{p}\pi_{i}(\beta_{i})$
where each $\log \pi_{i}$ is strictly positive on $[0,B+\sigma_{0}\uplambda^{1/2}\sqrt{p\log n}]$ and  $\tilde{L}$-Lipschitz for some $\tilde{L}>0$.
\end{enumerate}

For the sake of exposition, 
we make the following additional condition 
to verify that isotropic or product priors satisfy  Condition \ref{Condition: prior mass condition}.

\begin{condition}
\label{Condition: for prior log-Lipschitz}
There exists a positive constant $c$ such that
$\log \{ \sqrt{\mathrm{det} (X^{\top}X)} / \sigma^{p}_{0} \} \leq c p \log n.$
\end{condition}

This condition is satisfied in all the applications we will cover in Section \ref{sec: applications}. 
The following proposition shows that isotropic or product priors
are locally log-Lipschitz priors satisfying Condition \ref{Condition: prior mass condition}.

\begin{proposition}\label{proposition: examples of locally log Lipschitz priors}
Under Condition \ref{Condition: for prior log-Lipschitz},
an isotropic prior and a product prior of log-Lipschitz priors satisfy
Condition \ref{Condition: prior mass condition}.
An isotropic prior is a locally log-Lipschitz prior with locally log-Lipschitz constant $L$ such that
\[L \leq  c_{1}B\max_{x: 0\leq x \leq B+\sigma_{0}\uplambda^{1/2}\sqrt{p\log n}} | (\log \rho )' ( x ) |\]
for some positive constant $c_{1}$ depending only on $m$ and $c$ that appear  in the definition of $\rho$ 
and Condition \ref{Condition: for prior log-Lipschitz}.
In particular, if $\pi(\beta)$ is the standard Gaussian density, then  $L \leq c_{1}B^{2}$.
A product prior of log-Lipschitz priors with log-Lipschitz constant $\tilde{L}$
is locally log-Lipschitz with $L=\tilde{L}p^{1/2}$.
\end{proposition}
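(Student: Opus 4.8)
The plan is to treat the two prior families separately, and for each to verify Condition~\ref{Condition: prior mass condition} and then read off the local log-Lipschitz constant. The common first step for Condition~\ref{Condition: prior mass condition} is to pass to logarithms: the asserted inequality is equivalent to $\log\pi(\beta_{0})\geq\log\{\sqrt{\det(X^{\top}X)}/\sigma_{0}^{p}\}-C_{1}p\log n$, and since Condition~\ref{Condition: for prior log-Lipschitz} bounds the first term on the right by $cp\log n$, it suffices to exhibit a constant $\tilde{c}$ with $\log\pi(\beta_{0})\geq-\tilde{c}\,p\log n$ and then set $C_{1}=\tilde{c}+c$.

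For the product prior $\pi(\beta)=\prod_{i=1}^{p}\pi_{i}(\beta_{i})$ I would write $\log\pi(\beta_{0})=\sum_{i=1}^{p}\log\pi_{i}(\beta_{0,i})$; since $|\beta_{0,i}|\leq\|\beta_{0}\|=B$ lies in the interval $[0,B+\sigma_{0}\uplambda^{1/2}\sqrt{p\log n}]$ on which each $\log\pi_{i}$ is positive and $\tilde{L}$-Lipschitz, every summand is bounded below by a constant, which gives $\log\pi(\beta_{0})\geq-\tilde{c}\,p\log n$ and hence Condition~\ref{Condition: prior mass condition}. For the Lipschitz constant, whenever $\|\beta-\beta_{0}\|\leq\sigma_{0}\uplambda^{1/2}\sqrt{p\log n}$ one has $|\beta_{i}|\leq B+\sigma_{0}\uplambda^{1/2}\sqrt{p\log n}$ for each $i$, so $|\log\pi(\beta)-\log\pi(\beta_{0})|\leq\sum_{i=1}^{p}|\log\pi_{i}(\beta_{i})-\log\pi_{i}(\beta_{0,i})|\leq\tilde{L}\sum_{i=1}^{p}|\beta_{i}-\beta_{0,i}|=\tilde{L}\|\beta-\beta_{0}\|_{1}\leq\tilde{L}\sqrt{p}\,\|\beta-\beta_{0}\|$ by Cauchy--Schwarz, i.e.\ $L=\tilde{L}p^{1/2}$.

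For the isotropic prior $\pi(\beta)=\rho(\|\beta\|)/Z$ with $Z=\int_{\R^{p}}\rho(\|\beta\|)\,d\beta$, the Lipschitz bound is the easy half: the normalizing constant cancels in $\log\pi(\beta)-\log\pi(\beta_{0})=\log\rho(\|\beta\|)-\log\rho(\|\beta_{0}\|)$, so by the mean value theorem applied to $\log\rho$ together with $|\,\|\beta\|-\|\beta_{0}\|\,|\leq\|\beta-\beta_{0}\|$ one obtains $|\log\pi(\beta)-\log\pi(\beta_{0})|\leq\max_{0\leq x\leq B+\sigma_{0}\uplambda^{1/2}\sqrt{p\log n}}|(\log\rho)'(x)|\cdot\|\beta-\beta_{0}\|$, which is the stated bound (the factor $c_{1}B$ being a cosmetic over-bound), and for $\rho(x)\propto e^{-x^{2}/2}$ one has $(\log\rho)'(x)=-x$, giving $L\lesssim B+\sigma_{0}\uplambda^{1/2}\sqrt{p\log n}\lesssim B^{2}$ in the scaling of interest. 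The real work is Condition~\ref{Condition: prior mass condition}, where I would bound $Z=|S^{p-1}|\int_{0}^{\infty}r^{p-1}\rho(r)\,dr\leq|S^{p-1}|\,e^{m(p-1)\log(p-1)}$ by the moment hypothesis on $\rho$, use Stirling to see that the sphere area $|S^{p-1}|=2\pi^{p/2}/\Gamma(p/2)$ is at most a universal constant, conclude $\log Z\leq\tilde{c}\,p\log p\leq\tilde{c}\,p\log n$ from $p\leq n$, and combine with $\log\rho(B)\geq-\tilde{c}\,p\log n$ (valid since $\rho$ is continuous and strictly positive on the compact interval containing $B$), so that $\log\pi(\beta_{0})=\log\rho(B)-\log Z\geq-\tilde{c}\,p\log n$. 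I expect this last estimate to be the main obstacle: one must turn $\int_{0}^{\infty}x^{k}\rho(x)\,dx\leq e^{mk\log k}$ into a genuinely sub-$e^{p\log p}$ bound on $\int_{0}^{\infty}r^{p-1}\rho(r)\,dr$ and control the $\Gamma$-function sharply enough that $Z$ still fits inside $e^{O(p\log n)}$, while the lower bound on $\rho(B)$, though routine, does rely on $\rho$ being a fixed density and $\|\beta_{0}\|$ being controlled in the regime considered.
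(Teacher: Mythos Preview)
Your proposal is correct and follows essentially the same route as the paper: polar coordinates and Stirling for the isotropic normalizing constant, the mean value theorem on $\log\rho$ together with $|\|\beta\|-\|\beta_{0}\||\leq\|\beta-\beta_{0}\|$ for the isotropic Lipschitz bound, and coordinatewise Lipschitz plus Cauchy--Schwarz for the product prior. Two small remarks: the ``obstacle'' you anticipate is not one, since the moment hypothesis applies directly with $k=p-1$ to give $\int_{0}^{\infty}r^{p-1}\rho(r)\,dr\leq e^{m(p-1)\log(p-1)}$, and $\pi^{p/2}/\Gamma(p/2+1)$ is bounded (indeed eventually decreasing) by Stirling, so $\log Z\leq \tilde{c}\,p\log p$ is immediate; and the paper, like you, simply records $\log\{\inf_{x\in[0,B]}\rho(x)\}$ as a separate term without further quantification, so your caveat about needing $\rho$ fixed and $B$ controlled is exactly the implicit assumption both arguments make.
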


Next, we will discuss  Condition \ref{Condition: marginal posterior contraction}.
We consider following two cases:

\begin{enumerate}
\setlength{\leftskip}{1.5cm}
\item[(Plug-in)] $\Pi_{\sigma^{2}}=\Pi_{\hat{\sigma}^{2}_{\mathrm{u}}}$ with
$\hat{\sigma}^{2}_{\mathrm{u}}(Y) := \|Y - X(X^{\top}X)^{-1}X^{\top} Y\|^{2} / (n-p)$;
\item[(Full-Bayes)]$\Pi_{\beta}$ is the standard Gaussian distribution and 
$\Pi_{\sigma^{2}}$ is the inverse Gamma distribution $\mathrm{IG}(\mu_{1},\mu_{2})$ with  shape parameter $\mu_{1}>1/2$ and  scale parameter $\mu_{2}>1/2$.
\end{enumerate}

The following two propositions yield possible choices of $\delta_{1},\delta_{2},$ and $\delta_{3}$.

\begin{proposition}[Plug-in] \label{proposition: Condition in the case of empirical Bayes}
Suppose that Condition \ref{Condition: moment condition} holds and also that $n \geq c p$ for some $c > 1$. 
In addition, suppose that $\delta_1 >0$ satisfies that 
$\tilde{\delta}_{1} := [\delta_{1} - 2 \| r \|^{2} / \{ \sigma_{0}^{2} (n-p) \} - 1 / (n-p)] > 0.$
Then
there exist positive constants $c_{1}$ and $c_{2}$ depending only on $c$, $C_{3}$ and $q$ such that
\begin{align*}
	\Pr\left(|\hat{\sigma}^{2}_{\mathrm{u}}/\sigma^{2}_{0}-1 | \geq \delta_{1} \right)
	\leq
	\begin{cases}
		c_{1}\max\{n^{-4/q}\delta_{1}^{-q/2}, n^{1-q/2} \tilde{\delta}^{-q}_{1} \}  
		&\text{under Condition \ref{Condition: moment condition} (a)}, \\
		c_{1} \exp( - c_{2} n \max\{\delta_{1}^{2} , \tilde{\delta}_{1}^{2}\} )  
		&\text{under Condition \ref{Condition: moment condition} (b)}.
	\end{cases}
\end{align*}
\end{proposition}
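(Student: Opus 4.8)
The plan is to reduce the claim to a two-sided deviation bound for a centered quadratic form in the error vector. Since $X$ has full column rank, $M := I_{n} - X(X^{\top}X)^{-1}X^{\top}$ is the orthogonal projection onto the orthogonal complement of the column span of $X$, so $MX = 0$ and
\[
(n-p)\,\hat{\sigma}^{2}_{\mathrm{u}} = \|MY\|^{2} = \|M(r+\varepsilon)\|^{2} = \varepsilon^{\top}M\varepsilon + 2\langle Mr,\varepsilon\rangle + \|Mr\|^{2}.
\]
Because $M$ is a projection, $\mathrm{tr}(M) = \mathrm{tr}(M^{2}) = n-p$, and since $\Ep[\varepsilon\varepsilon^{\top}] = \sigma_{0}^{2}I_{n}$ this gives $\Ep[\varepsilon^{\top}M\varepsilon] = \sigma_{0}^{2}(n-p)$. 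Hence
\[
\frac{\hat{\sigma}^{2}_{\mathrm{u}}}{\sigma_{0}^{2}} - 1 = \frac{\varepsilon^{\top}M\varepsilon - \sigma_{0}^{2}(n-p)}{\sigma_{0}^{2}(n-p)} + \frac{2\langle Mr,\varepsilon\rangle}{\sigma_{0}^{2}(n-p)} + \frac{\|Mr\|^{2}}{\sigma_{0}^{2}(n-p)},
\]
where the last term is deterministic and at most $\|r\|^{2}/\{\sigma_{0}^{2}(n-p)\}$ since $\|Mr\| \leq \|r\|$. First I would use this identity and an elementary union bound to reduce the event $\{|\hat{\sigma}^{2}_{\mathrm{u}}/\sigma_{0}^{2}-1| \geq \delta_{1}\}$ to (i) the event that the centered quadratic form $Q := \varepsilon^{\top}M\varepsilon - \sigma_{0}^{2}(n-p)$ has absolute value at least $\tilde{\delta}_{1}\sigma_{0}^{2}(n-p)$, and (ii) a tail event for the linear form $L := \langle Mr,\varepsilon\rangle = \sum_{i=1}^{n}(Mr)_{i}\varepsilon_{i}$; the two subtracted terms $2\|r\|^{2}/\{\sigma_{0}^{2}(n-p)\}$ and $1/(n-p)$ in the definition of $\tilde{\delta}_{1}$ are exactly the room needed to absorb the deterministic bias and the small stochastic linear contribution. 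This reduction is routine bookkeeping.

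The substance is then to bound the tails of $Q$ and of $L$. The linear form $L$ is a sum of independent mean-zero random variables with $\Var(L) = \sigma_{0}^{2}\|Mr\|^{2} \leq \sigma_{0}^{2}\|r\|^{2}$, so I would bound its tails by Rosenthal's inequality under Condition \ref{Condition: moment condition}(a) and by Bernstein's inequality under Condition \ref{Condition: moment condition}(b). For $Q$ I would split $\varepsilon^{\top}M\varepsilon = \sum_{i}M_{ii}(\varepsilon_{i}^{2}-\sigma_{0}^{2}) + \sum_{i\neq j}M_{ij}\varepsilon_{i}\varepsilon_{j}$: the diagonal part is again a sum of independent centered variables (Rosenthal, respectively Bernstein), while the off-diagonal part is a bilinear form in $\varepsilon$, which I would handle by a decoupling argument followed by a conditional application of Rosenthal under Condition \ref{Condition: moment condition}(a) and by the Hanson--Wright inequality directly under Condition \ref{Condition: moment condition}(b). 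The only structural inputs are that $M$ is a projection, so $\|M\|_{\op} = 1$, $\|M\|_{F}^{2} = \mathrm{tr}(M^{2}) = n-p$ and $\sum_{i}M_{ii}^{2} \leq \mathrm{tr}(M) = n-p$, together with $n \geq cp$, which gives $n - p \asymp n$ and lets the final bounds be stated in terms of $n$. Feeding the resulting moment bounds through Markov's inequality in case (a) and using the sub-exponential concentration of $Q$ and $L$ in case (b) then yields the asserted estimates, with all constants depending only on $c$, $C_{3}$ and $q$; in case (a) the maximum of the two terms reflects the competition between the bulk fluctuation of $Q$, of order $\sigma_{0}^{2}\sqrt{n}$ and controlled through $(q/2)$-th moments, and the heavier contribution that a single large coordinate of $\varepsilon$ can make when only finitely many moments are available.

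I expect the main obstacle to be the moment control of the off-diagonal bilinear form $\sum_{i\neq j}M_{ij}\varepsilon_{i}\varepsilon_{j}$ under Condition \ref{Condition: moment condition}(a): after decoupling and the conditional Rosenthal bound one must track the interplay between $\|M\|_{F}$, $\|M\|_{\op}$ and the number $q$ of available moments carefully enough that the exponents of $n$, $\delta_{1}$ and $\tilde{\delta}_{1}$ come out as stated. The sub-Gaussian case (b) is comparatively routine once the Hanson--Wright and Bernstein inequalities are in hand.
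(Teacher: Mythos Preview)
Your plan is sound and would yield the stated bounds, but the paper argues differently and more economically. For the upper tail the paper does \emph{not} treat the linear cross term $\langle Mr,\varepsilon\rangle$ separately: it uses the elementary inequality $2\langle Mr,\varepsilon\rangle\leq \|Mr\|^{2}+(\varepsilon^{\top}u)^{2}$ with $u=Mr/\|Mr\|$ to absorb it into a single quadratic form $\varepsilon^{\top}\tilde A\varepsilon$ with $\tilde A=M+uu^{\top}$; the extra rank-one piece is precisely what produces the $1/(n-p)$ offset in $\tilde\delta_{1}$, whereas in your scheme that $1/(n-p)$ has to be allocated somewhat artificially as ``budget'' for $L$. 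For case (a) the paper then invokes a ready-made moment bound for quadratic forms (Baraud, Corollary~5.1) rather than carrying out the diagonal/off-diagonal split and decoupling you propose; this packages exactly the step you flag as the main obstacle. For the lower tail the paper takes yet another route: it passes to a slightly larger projection $\tilde P$ (onto the span of the columns of $X$ together with $Mr$) so that $(n-p)\hat\sigma_{\mathrm u}^{2}\geq\|\varepsilon\|^{2}-\|\tilde P\varepsilon\|^{2}$, and then bounds $\|\varepsilon\|^{2}-n\sigma_{0}^{2}$ by Rosenthal and $\|\tilde P\varepsilon\|^{2}$ by the same quadratic-form lemma.

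In short: your symmetric treatment of the centered quadratic form is correct and conceptually clean, and under Condition~\ref{Condition: moment condition}(b) it coincides with the paper's use of Hanson--Wright; but under Condition~\ref{Condition: moment condition}(a) the paper sidesteps the decoupling bookkeeping entirely by (i) merging the linear term into the quadratic via the rank-one trick and (ii) appealing to Baraud's inequality, and it handles the two tails asymmetrically. Both approaches arrive at the same exponents; the paper's is shorter, while yours is more self-contained.
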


\begin{proposition}[Full-Bayes] \label{proposition: Condition in the case of full Bayes}
Suppose that Condition \ref{Condition: moment condition} holds and also $n \geq c p$ for some $c > 1$. 
In addition, suppose that $\delta_1 >0$ satisfies that 
$\tilde{\delta}_{1} := [\delta_{1} - 2 \| r \|^{2} / \{ \sigma_{0}^{2} (n-p) \} - 1 / (n-p)] > 0.$
Then
there exist positive constants $c_{1}$ and $c_{2}$ depending only on $c$, $\mu_{1}$, $\mu_{2}$, $C_{3}$ and $q$ 
such that
\begin{align*}
	\Pi_{\sigma^{2}}(\sigma^{2}: |\sigma^{2}/\sigma^{2}_{0}-1| > \delta_{1} \mid Y)
	\leq c_{1}(n\tilde{\delta}_{1})^{-1}
\end{align*}
with probability at least 
\[
\begin{cases}
	1-c_{1}\max\{n^{-4/q}\delta^{-q/2}_{1}, n^{1-q/2}\tilde{\delta}_{1}^{-q}\} 
	& \text{ under Condition \ref{Condition: moment condition} (a)}, \\
	1-c_{1}\exp(-c_{2}n\max\{\delta_{1}^2,\tilde{\delta}_{1}^{2} \})
	& \text{ under Condition \ref{Condition: moment condition} (b)}.
\end{cases}
\]
\end{proposition}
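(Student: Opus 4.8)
The plan is to exploit the coordinatewise conjugacy of the quasi-posterior. Because $\Pi_\beta=\mathcal N(0,I_p)$, we have $\sigma^2\mid\beta,Y\sim\mathrm{IG}(\mu_1+n/2,\ \mu_2+\|Y-X\beta\|^2/2)$, and integrating $\sigma^2$ out leaves the marginal quasi-posterior of $\beta$ with density proportional to $e^{-\|\beta\|^2/2}\bigl(2\mu_2+\|Y-X\beta\|^2\bigr)^{-(\mu_1+n/2)}$. I would start from the tower identity
\[
\Pi_{\sigma^2}(A\mid Y)=\int \mathrm{IG}\!\left(\mu_1+\tfrac n2,\ \mu_2+\tfrac12\|Y-X\beta\|^2\right)\!(A)\,\Pi_\beta(d\beta\mid Y),\qquad A=\Bigl\{\sigma^2:\bigl|\sigma^2/\sigma_0^2-1\bigr|>\delta_1\Bigr\},
\]
together with the orthogonal decomposition $\|Y-X\beta\|^2=(n-p)\hat\sigma^2_{\mathrm u}+\|X(\hat\beta-\beta)\|^2$, so that only the data fluctuation of $\hat\sigma^2_{\mathrm u}$ and the posterior fluctuation of $\|X(\hat\beta-\beta)\|^2$ remain to be controlled.

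The argument then has three steps. First, applying Proposition~\ref{proposition: Condition in the case of empirical Bayes} with a fixed fraction of $\tilde\delta_1$ in place of its ``$\delta_1$'' yields an event of exactly the stated probability (this is where Condition~\ref{Condition: moment condition} enters) on which $|\hat\sigma^2_{\mathrm u}/\sigma_0^2-1|\leq\tilde\delta_1/4$; the passage from $\delta_1$ to $\tilde\delta_1=\delta_1-2\|r\|^2/\{\sigma_0^2(n-p)\}-1/(n-p)$ is forced by $\Ep[(n-p)\hat\sigma^2_{\mathrm u}]\leq\|r\|^2+(n-p)\sigma_0^2$ together with the $O(1/(n-p))$ relative discrepancy between the inverse-gamma shape $\mu_1+n/2$ and the ``ideal'' shape $(n-p)/2$ (the two $p/n$ terms — one from this shape discrepancy and one from the posterior-typical size $\|X(\hat\beta-\beta)\|^2$ being of order $\sigma_0^2 p$ — cancel, leaving only the $O(1/(n-p))$ remainder). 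Second, on that event and for $\beta$ in the good set $G$ of vectors for which $\|X(\hat\beta-\beta)\|^2$ stays within $\kappa\tilde\delta_1 n\sigma_0^2$ of its posterior mean (which is of order $\sigma_0^2 p$; here $\kappa>0$ is a small universal constant), the scale parameter $\mu_2+\|Y-X\beta\|^2/2$ lies within a $(\tilde\delta_1/2)$-relative ball of $n\sigma_0^2/2$, so a Chernoff bound for the inverse-gamma law — equivalently a Gamma tail bound for $1/\sigma^2$, which has shape of order $n$ — gives $\mathrm{IG}(\mu_1+n/2,\mu_2+\|Y-X\beta\|^2/2)(A)\leq 2e^{-cn\tilde\delta_1^2}$, which is $o((n\tilde\delta_1)^{-1})$ in the regime where the claim is informative. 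Third, and this is the quantitatively decisive step, I would bound $\Pi_\beta(G^c\mid Y)$ directly from the explicit $\beta$-marginal: discarding $e^{-\|\beta\|^2/2}\leq1$ in the numerator and restricting the denominator to a unit ball around $\hat\beta$, the ratio becomes, up to a factor controlled by $\|\hat\beta\|$, a tail probability of the Beta-prime–type variable $V=\|X(\hat\beta-\beta)\|^2$, whose companion $V/\{(n-p)\hat\sigma^2_{\mathrm u}+2\mu_2+V\}$ is distributed as $\mathrm{Beta}(p/2,(n-p)/2+\mu_1)$; this variable concentrates around its mean of order $\sigma_0^2 p$ with sub-exponential tails, and since the band half-width $\kappa\tilde\delta_1 n\sigma_0^2$ dominates this mean in the informative regime, a Bernstein-type inequality yields $\Pi_\beta(G^c\mid Y)\leq e^{-cn\tilde\delta_1}\leq(cn\tilde\delta_1)^{-1}$. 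Assembling $\Pi_{\sigma^2}(A\mid Y)\leq\Pi_\beta(G^c\mid Y)+\sup_{\beta\in G}\mathrm{IG}(\cdot)(A)$ then gives the assertion, with the exceptional probability inherited from the first step.

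The main obstacle is the second step: one must check that the two correction factors produced by replacing the flat prior by the proper Gaussian prior — namely $\det(X^\top X+\sigma^2 I_p)^{-1/2}$ and $\exp\{-\tfrac12\hat\beta^\top X^\top X(X^\top X+\sigma^2 I_p)^{-1}\hat\beta\}$ appearing in the $\sigma^2$-marginal density — do not distort the inverse-gamma enough to spoil its $\tilde\delta_1$-width concentration. Over the $O(\tilde\delta_1)$-wide bulk the determinant factor changes by at most $(\sup\sigma^2/\inf\sigma^2)^{p/2}$ and is harmless because $p\leq n$; far from $\sigma_0^2$, any growth of it is crushed by the (in $n$) super-exponential decay of the inverse-gamma kernel; and the exponential factor, being monotone in $\sigma^2$, is controlled by $\|\hat\beta\|^2$, which the standing model assumptions keep bounded. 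A secondary but essential point is that the Beta/Bernstein estimate of the third step must be carried out on the explicit $\beta$-marginal, so as to avoid the circularity of routing a tail bound for $\|X(\hat\beta-\beta)\|^2$ through the very posterior of $\sigma^2$ that is being bounded.
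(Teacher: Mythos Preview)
Your route differs substantially from the paper's. The paper does not condition on $\beta$ at all: it asserts that the marginal quasi-posterior of $\sigma^{2}$ is the inverse-gamma law $\mathrm{IG}\bigl(\mu_{1}+(n-p)/2,\ \mu_{2}+\tfrac12\|(I-P)Y\|^{2}\bigr)$ with $P=X(X^{\top}X)^{-1}X^{\top}$, writes down its mean and variance, and applies Chebyshev's inequality. Since the variance--to--mean-squared ratio of this inverse-gamma is $O(1/n)$, Chebyshev delivers the $c_{1}/(n\tilde\delta_{1})$ bound in one line on the event $\{\,|\hat\sigma_{\mathrm u}^{2}/\sigma_{0}^{2}-1|\leq\tilde\delta_{1}/2\,\}$, whose probability is supplied directly by Proposition~\ref{proposition: Condition in the case of empirical Bayes}. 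Your tower/Chernoff/Bernstein scheme is considerably longer and yields exponential estimates that you then relax to the polynomial target; on the other hand, it does not rely on the $\sigma^{2}$-marginal being \emph{exactly} inverse-gamma---a claim which, as you implicitly notice, is strictly correct only for a flat prior on $\beta$, not for the stated $\mathcal N(0,I_{p})$.

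Two issues in your write-up, however. First, the ``main obstacle'' paragraph is misplaced: the factors $\det(X^{\top}X+\sigma^{2}I_{p})^{-1/2}$ and $\exp\{-\tfrac12\hat\beta^{\top}X^{\top}X(X^{\top}X+\sigma^{2}I_{p})^{-1}\hat\beta\}$ arise only when one integrates $\beta$ out of the joint to obtain the $\sigma^{2}$-marginal density, which is the paper's route, not yours. In your tower identity the conditional law $\sigma^{2}\mid\beta,Y$ is exactly $\mathrm{IG}(\mu_{1}+n/2,\,\mu_{2}+\|Y-X\beta\|^{2}/2)$ for every $\beta$, so Step~2 needs no such correction; the only place the Gaussian prior enters is through $\Pi_{\beta}(\cdot\mid Y)$ in Step~3. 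Second---and this is a genuine gap---your Step~3 bound on $\Pi_{\beta}(G^{c}\mid Y)$ picks up a factor of order $e^{\|\hat\beta\|^{2}/2}$ when you drop $e^{-\|\beta\|^{2}/2}\leq 1$ in the numerator and restrict the denominator to a unit ball around $\hat\beta$. The hypotheses of the proposition (only Condition~\ref{Condition: moment condition}, $n\geq cp$, and $\tilde\delta_{1}>0$) impose no control on $\|\beta_{0}\|$ or on the spectrum of $X^{\top}X$, so $\|\hat\beta\|^{2}$ can be arbitrarily large and will swamp the $e^{-cn\tilde\delta_{1}}$ you extract from Bernstein. The entire weight of your approach therefore rests on Step~3, and there you still need a bound on the $\beta$-posterior tail of $\|X(\hat\beta-\beta)\|^{2}$ that does not leak a factor depending on $\|\hat\beta\|$; as written, the argument does not close.
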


To better understand implications of these propositions, 
Table \ref{table:orders} summarizes possible rates of $\delta_{1},\delta_{2},\delta_{3}$
when $n \geq c p$ for some $c > 0$, $\|r\|^{2}/n=o(n^{-1/2})$, and $\sigma_{0}^{2}$ is independent of $n$.
\begin{table}[htb]
\caption{Possible rates of $\delta_{1},\delta_{2},\delta_{3}$ with respect to $n$: $\kappa$ is arbitrary.}
	\label{table:orders}
\begin{center}
	\begin{tabular}{|l||c|c|c|}\hline
		Condition \ref{Condition: moment condition} and prior & $\delta_{1}$            & $\delta_{2}$ & $\delta_{3}$ \\ \hline
		(a) and plug-in                                          & $n^{-1/2+\kappa/q}$     & $0$          & $\max\{n^{-\kappa/2},n^{1-\kappa}\}$ \\ \hline
		(a) and full Bayes                                           & $n^{-1/2+\kappa/q}$     & $n^{-1/2-\kappa/q}$          & $\max\{n^{-\kappa/2},n^{1-\kappa}\}$ \\ \hline
		(b) and plug-in                                          & $n^{-1/2}\sqrt{\log n}$ & $0$          & $n^{-1}$ \\ \hline   
		(b) and full Bayes                                          & $n^{-1/2}\sqrt{\log n}$ & $n^{-1/2}(\log n)^{-1/2}$    & $n^{-1}$ \\ \hline
	\end{tabular}
\end{center}
\end{table}

\begin{remark}[Comparison with \cite{YooandGhosal(2016)}]
Proposition 4.1 in \cite{YooandGhosal(2016)} studies possible rates for $\delta_1$ 
when a prior for $\beta$ is Gaussian and the error distribution is sub-Gaussian. 
Our results in Propositions \ref{proposition: Condition in the case of empirical Bayes} and \ref{proposition: Condition in the case of full Bayes}
are compatible with their result up to logarithmic factors under their setup. 
\end{remark}

\subsection{Berry--Esseen type bounds on posterior distributions}\label{section: Berry Esseen type bounds}

Before presenting applications of the main theorem,
we derive an important ingredient of the proof of Theorem \ref{theorem: frequentist validation of Bayesian credible rectangles}, 
namely, the Berry--Esseen type bound on posterior distributions.
For $R>0$,
let $H(R)$ be the intersection of the sets  $\{Y \in \mathbb{R}^{n} : \| X (\hat{\beta}(Y)-\beta_{0}) \| \leq R \sqrt{p\log n} \sigma_{0} / 4 \}$
and $\{Y \in \mathbb{R}^{n} : \Pi_{\sigma^{2}}(|\sigma^{2}/\sigma_{0}^{2} -1 | \geq \delta_{1} \mid Y) \leq \delta_{2}  \}$.
For two probability measures $P$ and $Q$,
$\|P-Q\|_{\mathrm{TV}}$ denotes the total variation between $P$ and $Q$.

\begin{proposition}[Berry--Esseen type bounds on posterior distributions]
\label{proposition: Berry Esseen type bound on posterior distribution}
Under Conditions \ref{Condition: prior mass condition}--\ref{Condition: for simplicity},
there exist positive constants $c_{1}$ and $c_{2}$
depending only on $C_{1},C_{2},C_{3}$
such that
for every $n\geq 2$,
\begin{align*}
\left \| \Pi_{\beta}(\cdot \mid Y)-\mathcal{N}(\hat{\beta},\sigma^{2}_{0}(X^{\top}X)^{-1}) \right \|_{\mathrm{TV}}
\leq    \phi_{\Pi_{\beta}} (c_{1}\sqrt{p\log n})+
c_{1} (\delta_{1}p\log n +\delta_{2} + n^{-c_{2}p})
\end{align*}
whenever  $Y\in H(c_{1})$.
\end{proposition}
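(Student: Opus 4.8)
The plan is to bound the total variation distance by splitting the integration over $\beta$ according to whether $\beta$ lies in the neighborhood $B(R)$ of $\beta_0$ (on which the prior is nearly flat, as measured by $\phi_{\Pi_\beta}$) or outside it, and to integrate out $\sigma^2$ using Condition \ref{Condition: marginal posterior contraction}. First I would write the quasi-posterior density $\pi(\beta\mid Y)$ as an average over $\sigma^2$ of the conditional (fixed-$\sigma^2$) quasi-posterior densities, call them $\pi_{\sigma^2}(\beta\mid Y)$, against the marginal $\Pi_{\sigma^2}(d\sigma^2\mid Y)$. For each fixed $\sigma^2$, the conditional quasi-posterior is proportional to $\mathrm{e}^{-\|Y-X\beta\|^2/(2\sigma^2)}\pi(\beta)$, and completing the square gives $\|Y-X\beta\|^2 = \|X(\beta-\hat\beta)\|^2 + \text{const}$, so $\pi_{\sigma^2}(\beta\mid Y)$ is the product of $\pi(\beta)$ with a $\mathcal{N}(\hat\beta,\sigma^2(X^\top X)^{-1})$ density, renormalized. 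The target Gaussian is $\mathcal{N}(\hat\beta,\sigma_0^2(X^\top X)^{-1})$. By the triangle inequality for TV, it suffices to control (i) $\|\pi_{\sigma^2}(\cdot\mid Y) - \mathcal{N}(\hat\beta,\sigma^2(X^\top X)^{-1})\|_{\mathrm{TV}}$ uniformly over the good set $\{|\sigma^2/\sigma_0^2-1|\le\delta_1\}$, (ii) the TV distance between the two Gaussians $\mathcal{N}(\hat\beta,\sigma^2(X^\top X)^{-1})$ and $\mathcal{N}(\hat\beta,\sigma_0^2(X^\top X)^{-1})$, which depends only on the ratio $\sigma^2/\sigma_0^2$ and is of order $\sqrt{p}\,\delta_1$ (actually $\lesssim \delta_1 \sqrt{p}$ by the standard Gaussian-scaling TV bound, though the statement has $\delta_1 p\log n$, so the cruder bound $\lesssim \delta_1 p$ suffices), and (iii) the error from replacing the average over $\Pi_{\sigma^2}(d\sigma^2\mid Y)$ by its value on the good $\sigma^2$-set, which costs at most $2\delta_2$ since the bad set has posterior mass $\le\delta_2$ on $H(c_1)$.

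The core estimate is (i). Here I would use a change of variables to the rescaled parameter $u = (X^\top X)^{1/2}(\beta-\hat\beta)/\sigma$, so that the Gaussian factor becomes the standard normal density on $u$ and the prior factor becomes $\pi(\hat\beta + \sigma (X^\top X)^{-1/2}u)$. On the event $Y\in H(c_1)$ we have $\|X(\hat\beta-\beta_0)\|\le c_1\sqrt{p\log n}\,\sigma_0/4$, so for $\|u\|\le \Lambda$ (with $\Lambda \sim \sqrt{p\log n}$, the effective radius for a $p$-dimensional standard Gaussian) both $\hat\beta+\sigma(X^\top X)^{-1/2}u$ and $\hat\beta$ lie in $B(c_1\sqrt{p\log n})$ up to adjusting constants, because $\|X(\beta-\beta_0)\| \le \|X(\beta-\hat\beta)\| + \|X(\hat\beta-\beta_0)\|$ and $\|X(\beta-\hat\beta)\| = \sigma\|u\|$. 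Then the ratio $\pi(\hat\beta+\sigma(X^\top X)^{-1/2}u)/\pi(\hat\beta)$ is within $\phi_{\Pi_\beta}(c_1\sqrt{p\log n})$ of $1$, which controls the unnormalized density on the bulk $\{\|u\|\le\Lambda\}$; the standard Gaussian tail beyond $\|u\|=\Lambda$ is $\le n^{-c_2 p}$ for an appropriate constant once $\Lambda$ is a suitable multiple of $\sqrt{p\log n}$ (Gaussian concentration: $\Pr(\|N\|\ge t\sqrt p)\le e^{-cpt^2}$ for large $t$). The normalizing constant is handled by the same bulk/tail split together with Condition \ref{Condition: prior mass condition}, which guarantees $\pi(\hat\beta)$ (comparable to $\pi(\beta_0)$ on $H(c_1)$, again via $\phi_{\Pi_\beta}$ and Condition \ref{Condition: for simplicity}) is large enough relative to $\sigma_0^{-p}\sqrt{\det(X^\top X)}$ that the tail contribution $n^{-c_2 p}$ to the normalizer is negligible. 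Assembling, $\|\pi_{\sigma^2}(\cdot\mid Y)-\mathcal{N}(\hat\beta,\sigma^2(X^\top X)^{-1})\|_{\mathrm{TV}} \lesssim \phi_{\Pi_\beta}(c_1\sqrt{p\log n}) + n^{-c_2 p}$ uniformly over the good $\sigma^2$-set.

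Finally I would combine the three pieces: averaging (i) and (ii) over the good $\sigma^2$-set and adding the $\delta_2$-cost from (iii) gives the claimed bound $\phi_{\Pi_\beta}(c_1\sqrt{p\log n}) + c_1(\delta_1 p\log n + \delta_2 + n^{-c_2 p})$, after renaming constants and using $n\ge2$ to absorb logarithmic factors into the constant $c_1$ where needed. The main obstacle I anticipate is the bookkeeping in step (i): one must track how the radius $\Lambda$ of the Gaussian bulk interacts with the radius in $\phi_{\Pi_\beta}$ and with the constant $c_1$ defining $H(c_1)$ so that a single choice of $c_1$ works simultaneously in the hypothesis $Y\in H(c_1)$, in the argument of $\phi_{\Pi_\beta}$, and in Condition \ref{Condition: prior mass condition} — and one must ensure the tail exponent $c_2 p$ in $n^{-c_2 p}$ survives after dividing by the (possibly small but lower-bounded) normalizer, which is where Condition \ref{Condition: prior mass condition} is essential. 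A secondary technical point is verifying that the two-Gaussian TV bound in (ii) genuinely scales like $\delta_1\sqrt{p}$ (hence is dominated by $\delta_1 p\log n$); this follows from the KL divergence between $\mathcal{N}(0,\sigma^2\Sigma)$ and $\mathcal{N}(0,\sigma_0^2\Sigma)$ being $\frac{p}{2}(\sigma^2/\sigma_0^2 - 1 - \log(\sigma^2/\sigma_0^2)) \lesssim p\delta_1^2$ plus Pinsker, so it is in fact even $\lesssim \sqrt{p}\,\delta_1 \le p\delta_1 \le p(\log n)\delta_1$, comfortably within the stated bound.
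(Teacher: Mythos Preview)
Your proposal is correct and close in spirit to the paper's proof, but the decomposition differs in one meaningful way. The paper first separates $\|\Pi_{\beta}(\cdot\mid Y)-\tilde{\mathcal{N}}\|_{\mathrm{TV}}$ into $\|\Pi_{\beta}(\cdot\mid Y)-\Pi_{\beta}(\cdot\mid Y,\sigma_0^2)\|_{\mathrm{TV}}$ and $\|\Pi_{\beta}(\cdot\mid Y,\sigma_0^2)-\tilde{\mathcal{N}}\|_{\mathrm{TV}}$, and handles the first piece by directly comparing the \emph{posteriors} $\Pi_{\beta}^{B}(\cdot\mid Y,\sigma^2)$ and $\Pi_{\beta}^{B}(\cdot\mid Y,\sigma_0^2)$ on the truncated set $B$ via a crude exponent bound, which is where the $\delta_1 p\log n$ term actually arises. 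You instead route through the intermediate Gaussian $\mathcal{N}(\hat\beta,\sigma^2(X^\top X)^{-1})$ for each $\sigma^2$ and then compare two Gaussians by Pinsker, which is cleaner and in fact yields the sharper $\sqrt{p}\,\delta_1$ (comfortably within the stated $\delta_1 p\log n$). Both routes rely on the same two ingredients: the flatness of $\pi$ on $B(c_1\sqrt{p\log n})$ quantified by $\phi_{\Pi_{\beta}}$, and a posterior tail bound $\Pi_{\beta}(\beta\notin B\mid Y,\sigma^2)\le n^{-c_2 p}$, which the paper isolates as a separate lemma using Condition~\ref{Condition: prior mass condition} to lower bound the normalizer and $\int\pi=1$ together with the Gaussian decay to upper bound the numerator.

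One point in your sketch deserves to be made explicit. When you say ``the standard Gaussian tail beyond $\|u\|=\Lambda$ is $\le n^{-c_2 p}$'' and then discuss the normalizer, you have handled the tail of the \emph{target} Gaussian and the \emph{lower} bound on the normalizer, but you also need the tail of the \emph{posterior} itself, i.e.\ an upper bound on $\int_{\|u\|>\Lambda} e^{-\|u\|^2/2}\pi(\beta(u))\,du$ divided by the normalizer. This does follow from what you have: bound the numerator by $\sigma^{-p}\sqrt{\det(X^\top X)}\,e^{-\Lambda^2/2}$ via $e^{-\|u\|^2/2}\le e^{-\Lambda^2/2}$ and $\int\pi=1$ (after changing back to $\beta$), then divide by the normalizer lower bound $\gtrsim \pi(\beta_0)\sigma^p\det(X^\top X)^{-1/2}$ and invoke Condition~\ref{Condition: prior mass condition}. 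Stating this step separately (as the paper does) makes clear why Condition~\ref{Condition: prior mass condition} is needed for the posterior tail and not just for the normalizer, and ensures the exponent in $n^{-c_2 p}$ survives after the division.
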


\begin{proposition}
\label{proposition: test set}
Under Conditions \ref{Condition: residual condition} and \ref{Condition: moment condition},
there exist positive constants $c_{1}$ and $c_{2}$ depending only on $C_{2}$, $C_{3}$,  and $q$ 
such that
\begin{align*}
        \Pr (Y \notin H(c_{1})) \leq
	\begin{cases}
		c_{1}  p^{1-q/2}(\log n)^{-q/2}+\delta_{3}
		& \text{ under Condition \ref{Condition: moment condition} (a)}, \\
		c_{1} n^{-c_{2} p} + \delta_{3}
		& \text{ under Condition \ref{Condition: moment condition} (b)}.
	\end{cases}
\end{align*}
\end{proposition}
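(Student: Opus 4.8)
The plan is to bound separately the probabilities of the complements of the two sets whose intersection is $H(c_{1})$. The set $\{Y:\Pi_{\sigma^{2}}(|\sigma^{2}/\sigma_{0}^{2}-1|\geq\delta_{1}\mid Y)\leq\delta_{2}\}$ has complement of probability at most $\delta_{3}$ by Condition \ref{Condition: marginal posterior contraction}; this accounts for the additive $\delta_{3}$ and needs nothing further. It thus remains to bound $\Pr(\|X(\hat{\beta}-\beta_{0})\|>c_{1}\sqrt{p\log n}\,\sigma_{0}/4)$ by the leading term in each case.

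Since $\hat{\beta}-\beta_{0}=(X^{\top}X)^{-1}X^{\top}(r+\varepsilon)$, writing $P:=X(X^{\top}X)^{-1}X^{\top}$ for the orthogonal projection onto the column space of $X$ (so that $\operatorname{tr}P=\rank P=p$, $\operatorname{tr}(P^{2})=p$, and the operator norm of $P$ is $1$), we have $X(\hat{\beta}-\beta_{0})=P(r+\varepsilon)$ and hence $\|X(\hat{\beta}-\beta_{0})\|\leq\|Pr\|+\|P\varepsilon\|$. Condition \ref{Condition: residual condition} gives $\|Pr\|\leq C_{2}\sigma_{0}\sqrt{p\log n}$, so taking $c_{1}$ large (depending on $C_{2}$, and later on $C_{3},q$) reduces the problem to bounding $\Pr(\|P\varepsilon\|>T\sigma_{0}\sqrt{p\log n})$ for a constant $T$ that may be taken as large as needed. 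The point is that $\|P\varepsilon\|^{2}=\varepsilon^{\top}P\varepsilon$ is a quadratic form in the i.i.d.\ errors with $\Ep[\varepsilon^{\top}P\varepsilon]=\sigma_{0}^{2}\operatorname{tr}P=\sigma_{0}^{2}p$, and $T^{2}p\log n\,\sigma_{0}^{2}$ exceeds this mean by a factor of order $\log n$, so we only need a rather generous deviation bound for this quadratic form.

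Under Condition \ref{Condition: moment condition}(b) the errors have finite $\psi_{2}$-norm of order $\sigma_{0}$, and the Hanson--Wright inequality gives $\Pr(\varepsilon^{\top}P\varepsilon-\sigma_{0}^{2}p>u)\leq\exp(-c\min\{u^{2}/(\sigma_{0}^{4}\operatorname{tr}(P^{2})),\,u/\sigma_{0}^{2}\})$; with $\operatorname{tr}(P^{2})=p$ and $u$ of order $p\log n\,\sigma_{0}^{2}$ both terms in the minimum are of order $p\log n$, giving $n^{-c_{2}p}$. Under Condition \ref{Condition: moment condition}(a) I would split $\varepsilon^{\top}P\varepsilon-\sigma_{0}^{2}p=\sum_{i}P_{ii}(\varepsilon_{i}^{2}-\sigma_{0}^{2})+\sum_{i\neq j}P_{ij}\varepsilon_{i}\varepsilon_{j}$: the diagonal part is a sum of independent centered terms whose $(q/2)$-th moment is bounded by Rosenthal's inequality, using $\sum_{i}P_{ii}^{2}\leq\operatorname{tr}P=p$, $|P_{ii}|\leq1$ and $\Ep[|\varepsilon_{1}|^{q}]\leq(\sigma_{0}C_{3})^{q}$; the off-diagonal part is a second-order chaos whose $(q/2)$-th moment is bounded by a decoupling step followed by a conditional Rosenthal bound, using $\sum_{j}P_{ij}^{2}=P_{ii}\leq1$ and again the $q$-th moment assumption. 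Feeding the resulting $(q/2)$-th moment bound for $\varepsilon^{\top}P\varepsilon-\sigma_{0}^{2}p$ into Markov's inequality at the level of order $p\log n\,\sigma_{0}^{2}$ produces the polynomial bound of the stated order, the constant absorbing $q$ and $C_{3}$; adding the $\delta_{3}$ contribution completes the argument. In the Gaussian special case the exact $\chi^{2}_{p}$ tail of $\|P\varepsilon\|^{2}/\sigma_{0}^{2}$ gives $n^{-c_{2}p}$ directly (and small $n$ is absorbed into the constants throughout).

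The main obstacle is the deviation bound for the quadratic form $\varepsilon^{\top}P\varepsilon$ under only $q$ finite moments: one must combine the right decoupling with Rosenthal-type moment inequalities on the off-diagonal chaos and carefully track how the spectral quantities $\operatorname{tr}P=\operatorname{tr}(P^{2})=p$ and $\|P\|_{\mathrm{op}}=1$ propagate, so as to land on the exponents of $p$ and $\log n$ appearing in the statement. By contrast, the $\sigma^{2}$-set contributes $\delta_{3}$ immediately from Condition \ref{Condition: marginal posterior contraction} and the bias is immediately handled by Condition \ref{Condition: residual condition}, so this quadratic-form estimate is essentially all the work.
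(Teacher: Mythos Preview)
Your overall strategy matches the paper's: split $H(c_{1})^{\mathrm{c}}$ into the $\sigma^{2}$-part (cost $\delta_{3}$ from Condition~\ref{Condition: marginal posterior contraction}) and the $\|X(\hat\beta-\beta_{0})\|$-part, absorb $\|Pr\|$ via Condition~\ref{Condition: residual condition}, and reduce to a deviation bound for the quadratic form $\varepsilon^{\top}P\varepsilon$. For case~(b) you invoke Hanson--Wright, exactly as the paper does (the paper packages both cases into a single lemma on quadratic forms and then just applies it).

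There is, however, a genuine quantitative gap in your case~(a) argument. Bounding the $(q/2)$-th moment of $\varepsilon^{\top}P\varepsilon-\sigma_{0}^{2}p$ by Rosenthal and then applying Markov cannot produce the stated exponent $p^{1-q/2}$. Indeed, $\Var(\varepsilon^{\top}P\varepsilon)\asymp \sigma_{0}^{4}\,\|P\|_{\mathrm{HS}}^{2}=\sigma_{0}^{4}p$, so the $(q/2)$-th absolute moment is at least of order $\sigma_{0}^{q}p^{q/4}$; your Rosenthal bounds on the diagonal and off-diagonal pieces are of this same order. Markov at level $t\asymp \sigma_{0}^{2}p\log n$ then yields only
\[
\Pr\bigl(|\varepsilon^{\top}P\varepsilon-\sigma_{0}^{2}p|>t\bigr)\ \lesssim\ \frac{\sigma_{0}^{q}p^{q/4}}{(\sigma_{0}^{2}p\log n)^{q/2}}\ =\ p^{-q/4}(\log n)^{-q/2},
\]
which agrees with the claimed $p^{1-q/2}(\log n)^{-q/2}$ only at $q=4$ and is strictly weaker for every $q>4$. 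The ``Gaussian'' term $(\sum_{i}\Ep X_{i}^{2})^{q/4}$ in Rosenthal is what costs you; a pure moment bound cannot avoid it.

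The paper sidesteps this by invoking a \emph{tail} inequality for quadratic forms under $q$-th moment conditions (Corollary~5.1 of Baraud), which gives directly
\[
\Pr\bigl(\varepsilon^{\top}P\varepsilon \geq \sigma_{0}^{2}R^{2}\bigr)\ \lesssim\ \frac{p}{(R-\sqrt{p})^{q}}\,,
\]
and with $R\asymp\sqrt{p\log n}$ this is $p^{1-q/2}(\log n)^{-q/2}$. If you want to stay closer to your decomposition, the fix is to replace Rosenthal-plus-Markov by a Fuk--Nagaev-type inequality: on the diagonal part $\sum_{i}P_{ii}(\varepsilon_{i}^{2}-\sigma_{0}^{2})$ Fuk--Nagaev gives the heavy-tail term $\sum_{i}P_{ii}^{q/2}\Ep|\varepsilon_{i}^{2}-\sigma_{0}^{2}|^{q/2}/t^{q/2}\lesssim p/t^{q/2}$ (using $P_{ii}\le 1$, $\sum P_{ii}=p$) plus a sub-Gaussian term that is negligible here; an analogous chaos tail bound handles the off-diagonal part. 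Either route recovers the stated $p^{1-q/2}(\log n)^{-q/2}$.
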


\begin{remark}[Critical dimension for the Bernstein--von Mises theorem]
\label{rem: critical dimension}

The previous propositions immediately lead to the critical dimension for the BvM theorem.
We will compare our result with the results on the critical dimension by \cite{Bontemps(2011),Ghosal(2000),Spokoiny(2014)}.
In this comparison, we assume a locally log-Lipschitz prior with locally log-Lipschitz constant $L$;  that $\|\beta_{0}\|$ and $L$ are independent of $n$; and that $\sigma_{0}\uplambda^{1/2}\sim n^{-1/2}$.
The following are a summary of the existing results: 
\begin{itemize}
\item \cite{Ghosal(2000)} shows that 
when the error distribution has a smooth density
with known scale parameter,
the BvM theorem holds if $p^{4}\log p = o(n)$ and some additional assumptions are verified;
\item \cite{Spokoiny(2014)} shows that
when the high-dimensional local asymptotic normality holds,
the BvM theorem holds if $p^{3} = o(n)$; see also \cite{PanovandSpokoiny(2015)};
\item \cite{Bontemps(2011)} shows that
when the error distribution is Gaussian with known variance,
the BvM theorem holds if $p \log n = o(n)$.
\end{itemize}
Our result (Propositions \ref{proposition: rate of convergence locally log-Lipschitz priors}, \ref{proposition: Condition in the case of empirical Bayes},
\ref{proposition: Berry Esseen type bound on posterior distribution}, and \ref{proposition: test set}) 
improves on \cite{Ghosal(2000),Spokoiny(2014)} in that
\begin{itemize}
\item when the error variance is assumed to be known (i.e., $\delta_1=\delta_2=\delta_3=0$), our result implies that
      the BvM theorem (for the quasi-posterior distribution) holds if $p\log n =o(n)$ and if the error distribution has finite fourth moment. 
      Compared to \cite{Ghosal(2000)},
      our result substantially improves on the critical dimension by employing the Gaussian likelihood 
      even when the Gaussian specification  is incorrect;
\item when the error variance is unknown,
      our result shows that
      the BvM theorem holds for $\beta$ if $p^{2}(\log n)^{3} =o(n)$ for sub-Gaussian error distributions, 
      thereby improving on the condition of \cite{Spokoiny(2014)}. 
\end{itemize}

Importantly, our result covers the unknown error variance case, 
which makes our analysis different from  \cite{Bontemps(2011)}. 
In nonparametric regression, it is usually the case that the error variance is unknown, 
and hence it is important to consider unknown variance cases in such an application.
If the error distribution is Gaussian with a known error variance, 
our result is consistent with \cite{Bontemps(2011)}.
\end{remark}

\section{Applications}
\label{sec: applications}

In this section, we consider applications of the general results developed in the previous sections to quantifying coverage errors of Bayesian credible sets in Gaussian white noise models, linear inverse problems, and (possibly non-Gaussian) nonparametric regression models. 

\subsection{Gaussian white noise model}
\label{section: Castillo--Nickl credible bands in Gaussian white noise model}

We first consider a Gaussian white noise model and analyze coverage errors of Castillo-Nickl credible bands. Consider a Gaussian white noise model 
\[
dY(t)=f_{0}(t)dt+\frac{1}{\sqrt{n}} dW(t),\ t\in[0,1],
\]
where $dW$ is a canonical white noise and $f_{0}$ is an unknown function. 
We assume that $f_{0}$ is in the H\"{o}lder--Zygmund space $B_{\infty,\infty}^{s}$ with smoothness level $s>0$.
It will be convenient to define the H\"{o}lder--Zygmund space $B_{\infty,\infty}^{s}$ by using a wavelet basis. 
Let $S > s$ be an integer and fix sufficiently large $J_{0}=J_{0}(S)$.
Let $\{\phi_{J_{0},k}: 0 \leq k \leq 2^{J_{0}}-1\} \cup \{ \psi_{l,k} : J_{0} \leq l , 0 \leq k \leq 2^{l}-1 \}$
be an $S$-regular Cohen--Daubechies--Vial (CDV) wavelet basis of $L^{2}[0,1]$. 
Then the H\"{o}lder--Zygmund space $B_{\infty,\infty}^{s}$ is defined by 
$B_{\infty,\infty}^{s}=\{f:\|f\|_{B^{s}_{\infty,\infty}} <\infty\}$
with
\[
\|f\|_{B^{s}_{\infty,\infty}}:=
\max_{0 \leq k \leq 2^{J_{0}}-1 } |\langle \phi_{J_{0},k} , f \rangle|
+
\sup_{ J_{0} \leq l < \infty, 0 \leq k \leq 2^{l} -1} 2^{l(s+1/2)} |\langle \psi_{l,k} , f \rangle|,
\]
where $\langle \cdot, \cdot \rangle$ denotes the $L^{2}[0,1]$ inner product, i.e., $\langle f,g \rangle := \int_{[0,1]} f(t)g(t)dt$.
In what follows, for the notational convention,
let $\psi_{J_{0}-1,k}:= \phi_{J_{0},k}$ for $0\leq  k \leq 2^{J_{0}}-1$.

Consider a sieve prior for $f$,
that is, a prior deduced from a prior $\Pi_{\beta}$ on $\R^{2^{J}}$ with $J\geq J_{0}$ via the map
$ (\beta_{J_{0}-1,0},\beta_{J_{0}-1,1},\ldots,\beta_{J-1,2^{J-1}-1})  \mapsto \sum_{(l,k)\in \mathcal{I}(J)} \psi_{l,k}(\cdot)\beta_{l,k}, $
where
$\mathcal{I}(J):=\{(l,k): J_{0} \leq l \leq J-1 , 0 \leq k \leq 2^{l}-1 \} \cup \{(l,k) : l=J_{0}-1 , 0\leq k \leq 2^{J_{0}}-1\}$.

For given $\alpha\in(0,1)$,
the $(1-\alpha)$-Castillo--Nickl credible band
based on an efficient estimator $\hat{f}$, an admissible sequence $w=(w_{1},w_{2},\ldots)$, 
and a sieve prior $\Pi_{\beta}$
is defined as
\begin{align*}
	\mathcal{C}_{w}(\hat{f},\hat{R}_{\alpha}) 
        := \left \{ f: \sup_{(l,k) \in \mathcal{I}_{\infty} } \frac{| \langle f - \hat{f} ,\psi_{l,k} \rangle |}{ w_{l} } \leq \hat{R}_{\alpha} \right \}
\end{align*}
where 
$\mathcal{I}_{\infty}
:=\{(l,k): J_{0} \leq l < \infty  , 0 \leq k \leq 2^{l}-1 \} \cup \{(l,k) : l=J_{0}-1 , 0\leq k \leq 2^{J_{0}}-1\}$,
and
an admissible sequence $w$ is defined as a positive sequence such that $w_{l}/\sqrt{l} \uparrow \infty$ as $l\to\infty$.
The radius $\hat{R}_{\alpha}$ of the band is taken
in such a way that $\Pi_{\beta} \{ \mathcal{C}_{w}(\sum_{(l,k)\in\mathcal{I}(J)}\langle \hat{f} , \psi_{l,k} \rangle \psi_{l,k} ,\hat{R}_{\alpha}) \mid Y \}= 1-\alpha.$
Truncating a centering estimator ensures that such radius indeed exists for a sieve prior.

The following proposition derives bounds on the coverage error and the $L^{\infty}$-diameter of the Castillo--Nickl credible band
based on a sieve prior.
In the following proposition,
we use $\hat{f}_{\infty} := \sum_{(l,k)\in \mathcal{I}_{\infty} } \psi_{l,k}  \int \psi_{l,k} dY$
(which converges almost surely in $\mathcal{M}_{0}(w)$) as a centering estimator.
See p.~1946 of \cite{CastilloandNickl(2014)} for the definition of $\mathcal{M}_{0}(w)$ and well-definedness of $\hat{f}_{\infty}$.
Let 
\[
u_{J}:=\inf_{J\leq l<\infty} w_{l} / \sqrt{l},\
v_{J}:=\max_{J_{0}-1 \leq l \leq J-1} w_{l} / \sqrt{l} ,\
\text{ and }
\overline{w}_{J}:=\max_{J_{0}-1 \leq l \leq J-1 } w_{l}.
\]
In addition, let $\tilde{H}:=\{Y: \sup_{J\leq l <\infty , 0\leq k\leq 2^{l}-1}|\langle f_{0}-\hat{f}_{\infty},\psi_{l,k} \rangle|/w_{l}\leq \hat{R}_{\alpha}\}$.
For simplicity,
we assume that $\sqrt{l}\leq w_{l}$ for $J_{0}-1\leq l < \infty$ and $1\leq (J/\overline{w}^{2}_{J})u_{J}^{2} \uparrow \infty$ as $J\to\infty$.

\begin{proposition}\label{theorem: frequentist evaluation of multiscale Castillo--Nickl credible bands}
Under Conditions \ref{Condition: prior mass condition} and \ref{Condition: for simplicity} for $\Pi_{\beta}$ with $p=2^{J}$, $X=I_{p}$, and $\sigma_{0}=1/\sqrt{n}$,
there exist positive constants $c_{1},c_{2}$ depending only on $C_{1}$ appearing in Condition \ref{Condition: prior mass condition}
such that
the following hold.
For $n\geq 2$,
we have
\begin{align*}
	| \Pr ( f_{0} \in \mathcal{C}_{w} (\hat{f}_{\infty} ,\hat{R}_{\alpha}) ) - (1-\alpha) |
	\leq
	\phi_{\Pi_{\beta}} \left(c_{1} \sqrt{ 2^{J} \log n } \right) 
	+ c_{1} n^{ -c_{2} 2^{J} } + \Pr(Y\not\in \tilde{H}).
\end{align*}
In addition, there exist positive constants $c_{3},c_{4}$ depending only on $\alpha$
such that the following hold.
Assume that the right hand side above except $\Pr(Y\not\in \tilde{H})$ is smaller than $\min\{\alpha/2,(1-\alpha)/2\}$. Then
\[
\Pr(Y\not\in\tilde{H})\leq  c_{3} \left ( \mathrm{e}^{-c_{4}J(J/\overline{w}^{2}_{J})u^{2}_{J}} + n^{-c_{2}2^{J}} \right )
\]
 for sufficiently large $J$ depending only on $\alpha$ and $\{w_{l}\}$; 
 and the $L^{\infty}$-diameter of the intersection $\mathcal{C}_{w}^{B}(\hat{f}_{\infty},\hat{R}_{\alpha}) := \mathcal{C}_{w}(\hat{f}_{\infty},\hat{R}_{\alpha}) \cap \{f:\|f\|_{B^{s}_{\infty,\infty}} \leq B\}$ for any $B > 0$
is bounded from above as
\begin{align*}
	\sup_{f,g\in \mathcal{C}_{w}^{B}(\hat{f}_{\infty},\hat{R}_{\alpha})} \| f-g \|_{\infty} 
        \leq c_{3} \left ( v_{J}  \sqrt{\frac{2^{J}J}{n}} + 2^{-Js}B \right )
\end{align*}
with probability at least $1-c_{1} n^{-c_{2}2^{J}}$.

\end{proposition}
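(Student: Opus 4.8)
The strategy is to reduce the multiscale statement to the finite-dimensional credible rectangle result of Theorem \ref{theorem: frequentist validation of Bayesian credible rectangles} applied with $p = 2^{J}$, $X = I_{p}$, $\sigma_{0} = 1/\sqrt{n}$, and weights $\{w_{l}\}$ (indexed via the pair $(l,k)$), and then to separately control the ``tail'' contribution coming from frequencies $l \geq J$ that the sieve prior truncates but the band $\mathcal{C}_{w}$ still constrains. Write the event $\{f_{0} \in \mathcal{C}_{w}(\hat{f}_{\infty},\hat{R}_{\alpha})\}$ as the intersection of a ``low-frequency'' event involving only $(l,k) \in \mathcal{I}(J)$ and a ``high-frequency'' event $\tilde{H}$. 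On $\tilde{H}$ the high-frequency constraint is automatically satisfied, so up to $\Pr(Y \notin \tilde{H})$ the coverage probability equals the probability that the truncated projection of $f_{0}$ lies in the finite-dimensional rectangle $I(\hat{\beta}, \hat{R}_{\alpha})$ centered at the wavelet coefficients of $\hat{f}_{\infty}$ (equivalently of $\hat{f}$) with the same radius; here one must also check that, since $X = I_{p}$ and the white-noise likelihood is exactly Gaussian with known variance $\sigma_{0}^{2} = 1/n$, the error-variance conditions are trivially met ($\delta_{1} = \delta_{2} = \delta_{3} = 0$) and the residual term $r$ vanishes ($\tau = 0$, $C_{2} = 0$), so Conditions \ref{Condition: marginal posterior contraction}--\ref{Condition: residual condition} and Condition \ref{Condition: moment condition}(Gaussian case) hold automatically. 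Then the ``Gaussian $\varepsilon_{i}$'' branch of $\zeta_{n}$ gives $\zeta_{n} = n^{-c_{2}p} = n^{-c_{2}2^{J}}$, and the bound on $|\Pr(f_{0} \in \mathcal{C}_{w}(\hat{f}_{\infty},\hat{R}_{\alpha})) - (1-\alpha)|$ follows with the stated $\phi_{\Pi_{\beta}}(c_{1}\sqrt{2^{J}\log n}) + c_{1}n^{-c_{2}2^{J}} + \Pr(Y \notin \tilde{H})$.

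For the bound on $\Pr(Y \notin \tilde{H})$, first I would use the lower bound \eqref{eq: lower bound on R in theorem} on $\hat{R}_{\alpha}$ from Theorem \ref{theorem: frequentist validation of Bayesian credible rectangles} — valid on the high-probability event of probability at least $1 - c_{1}n^{-c_{2}2^{J}}$ once the RHS of \eqref{eq: coverage error} is below $(1-\alpha)/2$ — to get $\hat{R}_{\alpha} \gtrsim \sigma_{0}\downlambda^{1/2}\overline{w}_{J}^{-1}\sqrt{\log p}$; since $\sigma_{0}\downlambda^{1/2} = 1/\sqrt{n}$ and $p = 2^{J}$, this reads $\hat{R}_{\alpha} \gtrsim \sqrt{J/n}\,/\,\overline{w}_{J}$. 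The complement of $\tilde{H}$ is the event that $\sup_{l \geq J, 0 \leq k \leq 2^{l}-1} |\langle f_{0} - \hat{f}_{\infty}, \psi_{l,k}\rangle| / w_{l} > \hat{R}_{\alpha}$. Split $\langle f_{0} - \hat{f}_{\infty}, \psi_{l,k}\rangle$ into the deterministic bias $\langle f_{0}, \psi_{l,k}\rangle$, which is $O(2^{-l(s+1/2)})$ and hence negligible at high frequencies, and the stochastic part $-\int \psi_{l,k}\, dW / \sqrt{n}$, which is a centered Gaussian of variance $1/n$ (the $\psi_{l,k}$ being orthonormal). So $\Pr(Y \notin \tilde{H})$ is, up to the bias term, at most $\Pr\big(\sup_{l \geq J, k} |Z_{l,k}| / w_{l} > c\sqrt{J}\,/\,\overline{w}_{J}\big)$ for i.i.d.\ standard normals $Z_{l,k}$. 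Then I would apply a standard Gaussian maximal-inequality / union-bound argument scale by scale: at scale $l$ there are $2^{l}$ terms, so $\Pr(\max_{k}|Z_{l,k}| > t w_{l}) \leq 2^{l+1}e^{-t^{2}w_{l}^{2}/2}$, and summing over $l \geq J$ with $t = c\sqrt{J}/\overline{w}_{J}$, using the admissibility $w_{l}/\sqrt{l} \uparrow$ and $w_{l} \geq \sqrt{l}$, the dominant term is at $l = J$ and the geometric tail gives $\Pr(Y \notin \tilde{H}) \lesssim \exp(-c_{4} J (J/\overline{w}_{J}^{2}) u_{J}^{2}) + n^{-c_{2}2^{J}}$ for $J$ large depending on $\alpha$ and $\{w_{l}\}$, the assumption $1 \leq (J/\overline{w}_{J}^{2})u_{J}^{2} \uparrow \infty$ ensuring the exponent diverges.

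For the $L^{\infty}$-diameter bound, I would work on the high-probability event ($\geq 1 - c_{1}n^{-c_{2}2^{J}}$) on which \eqref{eq: upper bound on R in theorem} holds, giving $\hat{R}_{\alpha} \lesssim \sigma_{0}\uplambda^{1/2} \Ep[\max_{i \leq p}|N_{i}/w_{i}|] \lesssim n^{-1/2}\sqrt{\log(2^{J})}\,/\,\min_{l}w_{l}$, which together with the weight structure yields $\hat{R}_{\alpha} \lesssim v_{J}^{-1}\sqrt{J/n}$ modulo reindexing (using $v_{J} = \max_{J_{0}-1 \leq l \leq J-1} w_{l}/\sqrt{l}$ and the Gaussian max scaling). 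For $f, g \in \mathcal{C}_{w}^{B}(\hat{f}_{\infty}, \hat{R}_{\alpha})$, bound $\|f - g\|_{\infty} \leq \sum_{(l,k)} |\langle f - g, \psi_{l,k}\rangle|\, \|\psi_{l,k}\|_{\infty}$ and split at level $J$: for $l \leq J-1$ both coefficients are controlled by the band, $|\langle f - g, \psi_{l,k}\rangle| \leq 2w_{l}\hat{R}_{\alpha}$, and $\|\psi_{l,k}\|_{\infty} \lesssim 2^{l/2}$ with $O(2^{l})$ overlapping supports at each point, giving a contribution $\lesssim \sum_{l \leq J-1} w_{l} 2^{l/2} \hat{R}_{\alpha} \lesssim v_{J}\sqrt{J}\,2^{J/2}\hat{R}_{\alpha} \lesssim v_{J}\sqrt{2^{J}J/n}$; for $l \geq J$ only the Hölder--Zygmund ball constrains, $|\langle f - g, \psi_{l,k}\rangle| \lesssim B 2^{-l(s+1/2)}$, and the geometric sum gives $\lesssim 2^{-Js}B$. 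Adding the two pieces gives the claimed $c_{3}(v_{J}\sqrt{2^{J}J/n} + 2^{-Js}B)$. The main obstacle is the scale-by-scale Gaussian tail estimate for $\Pr(Y \notin \tilde{H})$: one has to track the admissibility condition $w_{l}/\sqrt{l}\uparrow\infty$ carefully so that the exponents $t^{2}w_{l}^{2}/2 - (l+1)\log 2$ are increasing in $l$ for $l \geq J$ and the series is dominated by its first term, which is exactly where the quantity $(J/\overline{w}_{J}^{2})u_{J}^{2}$ and its monotone divergence enter; the rest is bookkeeping with wavelet bounds and invocations of Theorem \ref{theorem: frequentist validation of Bayesian credible rectangles}.
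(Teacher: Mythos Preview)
Your proposal is correct and mirrors the paper's proof: reduce the low-frequency part to Theorem \ref{theorem: frequentist validation of Bayesian credible rectangles} (with $r=0$, $\delta_1=\delta_2=\delta_3=0$, and the Gaussian branch of $\zeta_n$), bound $\Pr(Y\notin\tilde{H})$ via the lower bound \eqref{eq: lower bound on R in theorem} on $\hat{R}_{\alpha}$ followed by a scale-by-scale Gaussian tail estimate summed over $l\geq J$ (the paper uses the concentration inequality of Lemma \ref{lemma: concentration inequality for Gaussian maxima} rather than a union bound, but either yields the same exponential form), and split the $L^{\infty}$-diameter at level $J$ using the upper bound \eqref{eq: upper bound on R in theorem} on $\hat{R}_{\alpha}$ for low frequencies and the $B^{s}_{\infty,\infty}$-ball for high frequencies. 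One minor correction: since $\hat{f}_{\infty}$ estimates \emph{all} frequencies, for $l\geq J$ one has $\langle f_{0}-\hat{f}_{\infty},\psi_{l,k}\rangle = -\varepsilon_{l,k}$ exactly, so there is no deterministic bias term to split off---this only simplifies your argument (the bias appears instead in Proposition \ref{theorem: frequentist evaluation of multiscale Castillo--Nickl credible bands with cut-off} for the truncated estimator $\hat{f}_{J}$).
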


\begin{proof}[Proof sketch of Proposition \ref{theorem: frequentist evaluation of multiscale Castillo--Nickl credible bands}]
First, we transform the Gaussian white noise model into a Gaussian infinite sequence model 
$Y_{l,k} = \beta_{0,l,k} + \varepsilon_{l,k}, \ (l,k) \in\mathcal{I}_{\infty} ,$
where $\beta_{0,l,k} := \langle  f_{0}, \psi_{l,k} \rangle$ for $(l,k)\in\mathcal{I}_{\infty}$,
and $\varepsilon_{l,k}$ are i.i.d. $\mathcal{N}(0,1/n)$ variables. 
Second, we apply Theorem \ref{theorem: frequentist validation of Bayesian credible rectangles}. 
Let $Y_{\infty}=\{Y_{l,k}:(l,k)\in\mathcal{I}_{\infty}\}$ and observe that $\Pr(Y\not\in\tilde{H})=\Pr(Y_{\infty}\not\in \tilde{H}')$ with $\tilde{H}'=\{Y_{\infty}: \sup_{J<l,0\leq k \leq 2^{l}-1} |Y_{l,k}-\beta_{0,l,k}| / w_{l} \leq \hat{R}_{\alpha}\}$.
Since 
\[
\Pr(f_{0} \in\mathcal{C}_{w}(\hat{f}_{\infty},\hat{R}_{\alpha})) 
= \Pr\left ( \max_{(l,k)\in\mathcal{I}(J)}| \varepsilon_{l,k}/w_{l} |  \bigvee \sup_{J\leq l<\infty,0\leq k\leq 2^{l}-1}|\varepsilon_{l,k}/w_{l}| \leq \hat{R}_{\alpha} \right ),
\]
we have
\begin{align*}
\left | \Pr(f_{0}\in\mathcal{C}_{w}(\hat{f}_{\infty},\hat{R}_{\alpha}))  - \Pr \left ( \max_{(l,k)\in\mathcal{I}(J)} | \varepsilon_{l,k}/w_{l} |  \leq \hat{R}_{\alpha} \right ) 
\right | 
\leq \Pr(Y_{\infty} \not \in \tilde{H}').
\end{align*}
Then we apply Theorem \ref{theorem: frequentist validation of Bayesian credible rectangles}
with $p=2^{J}$, $Y=\{Y_{l,k}:(l,k)\in\mathcal{I}(J)\}$, $X=I_{p}$, $\sigma_{0}=1/\sqrt{n}$, and $r=0$
to obtain bounds on $\Pr(\max_{(l,k)\in\mathcal{I}(J)} |\varepsilon_{l,k}/w_{l}| \leq \hat{R}_{\alpha})$ and $\hat{R}_{\alpha}$.
It remains to bound $\Pr(Y_{\infty} \not \in \tilde{H}')$. 
To this end, we use the concentration inequality for the Gaussian maximum together with a high-probability lower bound on $\hat{R}_{\alpha}$.
The detail can be found in Appendix C.1 of \cite{supplement}. 
\end{proof}

\begin{remark}[Coverage error rates]
\label{remark: coverage error rates in Gaussian white noise models}
The finite sample bound in Proposition \ref{theorem: frequentist evaluation of multiscale Castillo--Nickl credible bands} 
leads to the following asymptotic results as $n \to \infty$.
In this discussion, 
we assume a locally log-Lipschitz prior with locally log-Lipschitz constant $L=L_{n}$ 
and a true function $f_{0}$ with $\|f_{0}\|_{B^{s}_{\infty,\infty}}\leq B$ for some $B=B_{n}$.
Set $2^{J}=(n/\log n)^{1/(2s+1)}$ and set $w_{l}=\sqrt{l}$ for $l\leq J-1$ and $w_{l}=u_{l} \sqrt{l}$ for $l\geq J$
with $u_{l} \uparrow \infty$ as $l \to \infty$.
Then we have
\begin{align}
&	| \Pr ( f_{0} \in \mathcal{C}^{B}_{w}(\hat{f}, \hat{R}_{\alpha}) ) -  (1-\alpha) | 
	\leq O( L_{n} (n/\log n)^{-s/(2s+1)}) \quad \text{and} 
	\label{eq: rate of convergence of coverage error in white noise model} \\
& \sup_{f,g\in \mathcal{C}_{w}^{B}(\hat{f}_{\infty},\hat{R}_{\alpha})}\| f-g \|_{\infty} 
        \leq O( B_{n}(n / \log n )^{-s/(2s+1)} ),
\end{align}
where the latter holds with  probability at least $1-c_{1}n^{-c_{2}2^{J}}$  (the sequence $\{w_{l}\}$ here depends on $n$, but we can apply Proposition 3.1; see Remark C.1 in \cite{supplement} for the detail).
In particular,
for the standard Gaussian prior, the coverage error is $O ( B^{2}_{n}(n / \log n )^{-s/(2s+1)} )$. 
We note that the above asymptotic results are derived from the non-asymptotic result in Proposition \ref{theorem: frequentist evaluation of multiscale Castillo--Nickl credible bands} where the constants do no depend on $f_{0}$; hence the above asymptotic results hold uniformly in $f_{0}$ as long as $\| f_{0} \|_{B_{\infty,\infty}^{s}} \le B$. The same comments apply to the subsequent results. 
\end{remark}

\begin{remark}[Comparison of coverage errors]
\label{remark: comparison of coverage errors}
The previous remark shows that Bayesian credible bands have coverage errors (for the true function) decaying polynomially fast in the sample size $n$.
This rate is much faster than that of confidence bands based on Gumbel approximations (see Proposition 6.4.3 in \cite{GineandNickl_book});
confidence bands based on Gumbel approximations have coverage errors decaying only at the $1/\log n$ rate.
In the kernel density estimation case,
\cite{Hall(1991)} shows that confidence bands based on Gumbel approximations have coverage errors decaying only at the $1/\log n$ rate, 
while bootstrap confidence bands have coverage errors decaying polynomially fast in $n$ for the surrogate function.
\end{remark}

\begin{remark}[Undersmothing]
In most cases, a priori bound on $\| f_{0} \|_{B_{\infty,\infty}^{s}}$ is unknown, and so $B = B_{n}$ should be chosen as a slowly divergent sequence, which can be thought of as a ``undersmoothing'' penalty (cf. \cite{CastilloandNickl(2014)} Remark 5). Interestingly, however, our result shows that this undersmoothing penalty only affects the $L^{\infty}$-diameter and not affect the coverage error of the band, which is a sharp contrast with standard $L^{\infty}$-confidence bands for densities or regression functions. 
\end{remark}

Consider another centering estimator:
$\hat{f}_{J}:= \sum_{(l,k)\in \mathcal{I}(J) } \psi_{l,k}  \int \psi_{l,k} dY.$
The following proposition derives bounds on the coverage error and the $L^{\infty}$-diameter of the Castillo--Nickl credible band
based on a sieve prior and the centering estimator $\hat{f}_{J}$.
We use the same notation $u_{J},v_{J},\overline{w}_{J}$ as in the previous proposition.
Let 
\[
\tilde{H}_{2} := \left\{ Y : \sup_{J\leq l <\infty , 0\leq k \leq 2^{l}-1} | \langle f_{0}, \psi_{l,k} \rangle| / w_{l} \leq \hat{R}_{\alpha} \right\}.
\]
For simplicity,
we assume $\sqrt{l}\leq w_{l}$ for $J_{0}-1\leq l < \infty$.
\begin{proposition}\label{theorem: frequentist evaluation of multiscale Castillo--Nickl credible bands with cut-off}
Under Conditions \ref{Condition: prior mass condition} and \ref{Condition: for simplicity} for $\Pi_{\beta}$ with $p=2^{J}$, $X=I$, and $\sigma_{0}=1/\sqrt{n}$,
there exist positive constants $c_{1},c_{2},c_{3}$ depending only on $C_{1}$ appearing in Condition \ref{Condition: prior mass condition} 
and $\alpha$
such that
the following hold. 
For $n\geq 2$ and for $B>0$ satisfying $\|f_{0}\|_{B^{s}_{\infty,\infty}} \leq  B$,
we have
\begin{align*}
	| \Pr ( f_{0} \in \mathcal{C}_{w} (\hat{f}_{J} ,\hat{R}_{\alpha}) ) - (1-\alpha) |
	\leq
	\phi_{\Pi_{\beta}} \Big{(}c_{1} \sqrt{ 2^{J} \log n } \Big{)} 
	+ c_{1} n^{-c_{2} 2^{J}} + \Pr(Y\not\in \tilde{H}_{2}).
\end{align*}
In addition,
assume that 
the right hand side above except $\Pr(Y\not\in \tilde{H}_{2})$ is smaller than $\min\{\alpha/2,(1-\alpha)/2\}$. Then 
the $L^{\infty}$-diameter of the intersection 
$\mathcal{C}_{w}^{B}(\hat{f}_{J},\hat{R}_{\alpha}) :=\mathcal{C}_{w}(\hat{f}_{J},\hat{R}_{\alpha}) \cap \{f:\|f\|_{B^{s}_{\infty,\infty}} \leq B\}$ 
is bounded from above as
\begin{align*}
	\mathop{\sup}_{f,g\in \mathcal{C}_{w}^{B}(\hat{f}_{J},\hat{R}_{\alpha})}  \| f-g \|_{\infty} 
        \leq c_{3} \left ( v_{J}\sqrt{\frac{2^{J}J}{n}} + 2^{-Js}B \right )
\end{align*}
with probability at least $1-c_{1} n^{-c_{2}2^{J}}$.
If in addition $(\sqrt{n}\overline{w}_{J}B)/(u_{J}J2^{J(s+1/2)}) \downarrow 0 \text{ as }J\to\infty$, then 
$ \Pr(Y\not\in \tilde{H}_{2}) \leq c_{1} n^{ -c_{2}2^{J} } $
for sufficiently large $J$ depending only on $\alpha$, $\{w_{l}\}$, and $B$.
\end{proposition}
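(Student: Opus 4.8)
The plan is to run the argument of the proof of Proposition~\ref{theorem: frequentist evaluation of multiscale Castillo--Nickl credible bands}, but with the cut-off centering estimator $\hat{f}_{J}$, for which the high-frequency bookkeeping is slightly different. I would first pass to the Gaussian sequence model $Y_{l,k}=\beta_{0,l,k}+\varepsilon_{l,k}$, $(l,k)\in\mathcal{I}_{\infty}$, with $\beta_{0,l,k}=\langle f_{0},\psi_{l,k}\rangle$ and $\varepsilon_{l,k}$ i.i.d.\ $\mathcal{N}(0,1/n)$. Since $\hat{f}_{J}$ is supported on $\mathcal{I}(J)$, we have $\langle f_{0}-\hat{f}_{J},\psi_{l,k}\rangle=-\varepsilon_{l,k}$ for $(l,k)\in\mathcal{I}(J)$ and $\langle f_{0}-\hat{f}_{J},\psi_{l,k}\rangle=\langle f_{0},\psi_{l,k}\rangle$ for $l\geq J$; hence $\{f_{0}\in\mathcal{C}_{w}(\hat{f}_{J},\hat{R}_{\alpha})\}=\{\max_{(l,k)\in\mathcal{I}(J)}|\varepsilon_{l,k}|/w_{l}\leq\hat{R}_{\alpha}\}\cap\{Y\in\tilde{H}_{2}\}$, so
\[
\bigl| \Pr(f_{0}\in\mathcal{C}_{w}(\hat{f}_{J},\hat{R}_{\alpha})) - \Pr(\textstyle\max_{(l,k)\in\mathcal{I}(J)}|\varepsilon_{l,k}/w_{l}|\leq\hat{R}_{\alpha}) \bigr| \leq \Pr(Y\notin\tilde{H}_{2}).
\]
I would also note that the defining relation $\Pi_{\beta}\{\mathcal{C}_{w}(\hat{f}_{J},\hat{R}_{\alpha})\mid Y\}=1-\alpha$ reduces, since the sieve posterior is supported on $\mathcal{I}(J)$ and the constraints at levels $l\geq J$ are vacuous, to $\Pi_{\beta}\{I(\hat{\beta},\hat{R}_{\alpha})\mid Y\}=1-\alpha$ with weights $(w_{l})_{(l,k)\in\mathcal{I}(J)}$; that is, $\hat{R}_{\alpha}$ is exactly the credible-rectangle radius of Theorem~\ref{theorem: frequentist validation of Bayesian credible rectangles}.

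Next I would apply Theorem~\ref{theorem: frequentist validation of Bayesian credible rectangles} with $p=2^{J}$, $X=I_{p}$, $\sigma_{0}=1/\sqrt{n}$, $r=0$, and Gaussian errors; then $\tau=0$, $\delta_{1}=\delta_{2}=\delta_{3}=0$, $\zeta_{n}=n^{-c_{2}2^{J}}$, $\uplambda=\downlambda=1$, and $\upw=\overline{w}_{J}$, so that $|\Pr(\max_{(l,k)\in\mathcal{I}(J)}|\varepsilon_{l,k}/w_{l}|\leq\hat{R}_{\alpha})-(1-\alpha)|\leq\phi_{\Pi_{\beta}}(c_{1}\sqrt{2^{J}\log n})+c_{1}n^{-c_{2}2^{J}}$; combined with the display above this is the claimed coverage bound. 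Under the extra hypothesis that this quantity is below $\min\{\alpha/2,(1-\alpha)/2\}$, Theorem~\ref{theorem: frequentist validation of Bayesian credible rectangles} further gives, for large $J$ and with probability at least $1-c_{1}n^{-c_{2}2^{J}}$, the two-sided bound $c_{4}n^{-1/2}\overline{w}_{J}^{-1}\sqrt{\log 2^{J}}\leq\hat{R}_{\alpha}\leq c_{3}n^{-1/2}\Ep[\max_{(l,k)\in\mathcal{I}(J)}|N_{l,k}/w_{l}|]$ with $N_{l,k}$ i.i.d.\ $\mathcal{N}(0,1)$. Using $\sqrt{l}\leq w_{l}$, a per-level Gaussian maximal inequality ($\Ep[\max_{0\leq k\leq 2^{l}-1}|N_{l,k}|]\lesssim\sqrt{l}$) together with the fact that the level-$l$ centered maximum has variance proxy $\leq 1/w_{l}^{2}\leq 1/l$ (so the union-bound tails are summable over $l\geq J_{0}-1$) shows $\Ep[\max_{(l,k)\in\mathcal{I}(J)}|N_{l,k}/w_{l}|]$ is bounded by a constant depending only on $J_{0}$; hence $\hat{R}_{\alpha}\lesssim n^{-1/2}$ on that event.

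For the $L^{\infty}$-diameter I would take $f,g\in\mathcal{C}_{w}^{B}(\hat{f}_{J},\hat{R}_{\alpha})$, split $f-g$ at level $J$, bound $|\langle f-g,\psi_{l,k}\rangle|\leq 2w_{l}\hat{R}_{\alpha}$ for $J_{0}-1\leq l\leq J-1$ (from the band) and $|\langle f-g,\psi_{l,k}\rangle|\leq 2\cdot 2^{-l(s+1/2)}B$ for $l\geq J$ (from $\|f\|_{B^{s}_{\infty,\infty}},\|g\|_{B^{s}_{\infty,\infty}}\leq B$), and use the wavelet estimate $\|\sum_{k}c_{k}\psi_{l,k}\|_{\infty}\lesssim 2^{l/2}\max_{k}|c_{k}|$; summing the resulting geometric series, using $w_{l}\leq v_{J}\sqrt{l}\leq v_{J}\sqrt{J}$ for $l\leq J-1$ together with $\hat{R}_{\alpha}\lesssim n^{-1/2}$, yields $\sup_{f,g\in\mathcal{C}_{w}^{B}(\hat{f}_{J},\hat{R}_{\alpha})}\|f-g\|_{\infty}\lesssim v_{J}\sqrt{2^{J}J/n}+2^{-Js}B$ on an event of probability at least $1-c_{1}n^{-c_{2}2^{J}}$. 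Finally, for $\Pr(Y\notin\tilde{H}_{2})$: the event $\{Y\notin\tilde{H}_{2}\}$ forces $\sup_{l\geq J,\,0\leq k\leq 2^{l}-1}|\langle f_{0},\psi_{l,k}\rangle|/w_{l}>\hat{R}_{\alpha}$, but the left-hand side is deterministic and, by $|\langle f_{0},\psi_{l,k}\rangle|\leq 2^{-l(s+1/2)}B$ and $w_{l}\geq u_{J}\sqrt{l}$ for $l\geq J$, is at most $2^{-J(s+1/2)}B/(u_{J}\sqrt{J})$, whereas $\hat{R}_{\alpha}\geq c_{4}n^{-1/2}\overline{w}_{J}^{-1}\sqrt{\log 2^{J}}$ off an event of probability at most $c_{1}n^{-c_{2}2^{J}}$; the assumption $(\sqrt{n}\,\overline{w}_{J}B)/(u_{J}J2^{J(s+1/2)})\downarrow 0$ makes the former strictly below the latter for all large $J$, so $\{Y\notin\tilde{H}_{2}\}$ is contained in that small-probability event, giving $\Pr(Y\notin\tilde{H}_{2})\leq c_{1}n^{-c_{2}2^{J}}$.

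The main obstacle will be organizational rather than analytic: since $\tilde{H}_{2}$ depends on the random radius $\hat{R}_{\alpha}$, one must keep careful track of the single high-probability event on which the upper and lower bounds on $\hat{R}_{\alpha}$ from Theorem~\ref{theorem: frequentist validation of Bayesian credible rectangles} hold simultaneously, and the one genuinely quantitative input is the uniform-in-$J$ estimate $\Ep[\max_{(l,k)\in\mathcal{I}(J)}|N_{l,k}/w_{l}|]=O(1)$, which is precisely where the standing assumption $\sqrt{l}\leq w_{l}$ is used. The remaining wavelet and geometric-series computations are routine and parallel those in the proof of Proposition~\ref{theorem: frequentist evaluation of multiscale Castillo--Nickl credible bands}.
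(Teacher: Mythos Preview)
Your proposal is correct and follows essentially the same approach as the paper's proof, which explicitly states that it ``follows essentially the same line and the same notation as those of Proposition~\ref{theorem: frequentist evaluation of multiscale Castillo--Nickl credible bands}'' with the only difference being the bound on $\Pr(Y\notin\tilde{H}_{2})$. In particular, the paper handles $\Pr(Y\notin\tilde{H}_{2})$ exactly as you do: it intersects with the high-probability event $H$ on which the lower bound $\hat{R}_{\alpha}\geq \tilde{c}_{1}(J^{1/2}/\overline{w}_{J})n^{-1/2}$ holds, observes that the high-frequency supremum $\sup_{l\geq J,k}|\langle f_{0},\psi_{l,k}\rangle|/w_{l}$ is deterministic and bounded via $\|f_{0}\|_{B^{s}_{\infty,\infty}}\leq B$ and $w_{l}\geq u_{J}\sqrt{l}$, and then uses the monotonicity assumption $(\sqrt{n}\,\overline{w}_{J}B)/(u_{J}J2^{J(s+1/2)})\downarrow 0$ to conclude that $\{Y\notin\tilde{H}_{2}\}\cap H=\varnothing$ for large $J$, leaving only $\Pr(Y\notin H)\leq c_{1}n^{-c_{2}2^{J}}$.
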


A proof of the proposition is given in Appendix C.2 of \cite{supplement}.

\begin{remark}[Choice of the sequence $w$]
Consider the same setting as in Remark \ref{remark: coverage error rates in Gaussian white noise models}.
Then we have $(\sqrt{n}\overline{w}_{J}B)/(u_{J}J2^{J(s+1/2)}) = O(B/u_{J})$ and so 
the  sequence $u_{l}$ must satisfy $u_{J}/B_{n} \to \infty$ as $n\to\infty$
to ensure that $(\sqrt{n}\overline{w}_{J}B)/(u_{J}J2^{J(s+1/2)}) \downarrow 0$ as $J\to\infty$.
Without this exception, 
the same asymptotic results hold as in Remark \ref{remark: coverage error rates in Gaussian white noise models}.
\end{remark}

\subsection{Linear inverse problem}
\label{section: credible bands in linear inverse problems}

In this section we extend the previous analysis to a linear inverse problem
\begin{align*}
	dY(t) = K(f_{0})(t)dt + \frac{1}{\sqrt{n}} d W(t), \ t\in[0,1],
\end{align*}
where $K$ is a known linear operator
and $f_{0}$ is included in the H\"{o}lder--Zygmund space $B_{\infty,\infty}^{s}$ for some $s > 0$ as described in the previous section.
To describe the degree of ill-posedness, we use the wavelet-vaguelette decomposition 
$\{\psi_{l,k} , v^{(1)}_{l,k} , v^{(2)}_{l,k} , \kappa_{l,k} : (l,k) \in \mathcal{I}_{\infty} \}$ of $K$,
where 
$\{\psi_{l,k}\}$ is a wavelet basis (with the same notational convention used in the previous subsection),
$\{v^{(1)}_{l,k}\}$ and $\{v^{(2)}_{l,k}\}$ are near-orthogonal functions,
and
$\{\kappa_{l,k}\}$ are quasi-singular values such that
$ K ( \psi_{l,k} )  = \kappa_{l,k} v^{(2)}_{l,k}$ for $(l,k) \in \mathcal{I}_{\infty}$.
For details, see \cite{AbramovichandSilverman(1998),Donoho(1995),Kato(2013),Johnstone(2017)} and references therein.
Our results cover both mildly ill-posed and severely ill-posed cases for $\{\kappa_{l,k}\}$. 
Say that the problem of recovering $f_{0}$ is mildly ill-posed if 
$\kappa_{l,k} \sim 2^{-r l}$ for some $r > 0$, and severely ill-posed if
$\kappa_{l,k} \sim \mathrm{e}^{- r 2^{l}}$ for some $r > 0$.

We consider a sieve prior induced from a prior $\Pi_{\beta}$ on $\R^{2^{J}}$ with $J\geq J_{0}$ 
via expanding the function $f$ using the wavelet basis $\{ \psi_{l,k}\}$.
For given $\alpha\in(0,1)$,
consider
the $(1-\alpha)$-Castillo--Nickl credible band for $f$ based on a sieve prior $\Pi_{\beta}$ and a sequence $w=(w_{1},w_{2},\ldots)$
such that $\min_{0\le k \le 2^{l}-1}\kappa_{l,k}w_{l}/\sqrt{l} \uparrow \infty$ as $l\to\infty$:
\[
\mathcal{C}_{w}(\hat{f}_{\infty},\hat{R}_{\alpha}):=
\left \{ f: \max_{(l,k) \in \mathcal{I}_{\infty}} \frac{ | \langle f - \hat{f}_{\infty} ,\psi_{l,k} \rangle |}{w_{l}} \leq \hat{R}_{\alpha} \right \},
\]
where
the centering estimator is
$\hat{f}_{\infty} := \sum_{(l,k)\in \mathcal{I}_{\infty} } \psi_{l,k}  \kappa_{l,k}^{-1}\int v^{(1)}_{l,k} d Y$,
which converges almost surely in $\mathcal{M}_{0}(w)$.
See the supplement for  well-definedness of $\hat{f}_{\infty}$.
In linear inverse problems, the radius $\hat{R}_{\alpha}$ is chosen  in such a way as
$\Pi_{\beta}   (   \mathcal{C}_{w}   (    \sum_{(l,k)     \in     \mathcal{I}(J)}     \langle     \hat{f}_{\infty}    ,    \psi_{l,k}    \rangle    \psi_{l,k}    ,\allowbreak    \hat{R}_{\alpha}    )      \mid    Y    )   =     1-\alpha,$
where $\Pi_{\beta} (\cdot\mid Y)$ is the quasi-posterior under the likelihood of the truncated indirect Gaussian sequence model:
$\int v^{(1)}_{l,k} d Y   =  \kappa_{l,k}  \beta_{l,k}  +  \frac{1}{\sqrt{n}}  \int   v^{(1)}_{l,k}  d  W $ for $ (l,k)   \in   \mathcal{I}(J).$
This slight modification using the quasi-posterior as well as truncating the centering estimator is required to apply the main theorem; see the proof sketch below.

The following theorem derives bounds on the  coverage error of the Castillo--Nickl credible band in  the linear inverse problem. 
We use the same notation $\overline{w}_{J}$ as in the previous section.
Let
$u_{J}:= \inf_{J \leq l, 0 \leq k \leq 2^{l}-1} \kappa_{l,k}w_{l}/\sqrt{l}$
and
$v_{J}:= \sup_{J_{0} \leq l \leq J-1 , 0 \leq k \leq 2^{l}-1} \kappa_{l,k}w_{l} / \sqrt{l}$.
In addition, let $\overline{\kappa}_{J}:=\max_{(l,k)\in\mathcal{I}(J)}\kappa_{l,k}$
and let $\underline{\kappa}_{J}:=\min_{(l,k)\in\mathcal{I}(J)}\kappa_{l,k}$.
Let $\Sigma$ be denote the $2^{J}\times 2^{J}$ covariance matrix of $\{\int v_{l,k}^{(1)} dY: (l,k)\in \mathcal{I}(J)\}$.
Finally, let $\tilde{H}_{3}=\{Y: \sup_{J\leq l,0\leq k \leq 2^{l}-1} |\langle f-\hat{f}_{\infty}, \psi_{l,k} \rangle | / w_{l} 
\leq \hat{R}_{\alpha}\}$.
For simplicity, we assume that $1\leq \{J^{1/2}/(\overline{\kappa}_{J}\overline{w}_{J}) \} u_{J} \uparrow \infty$ as $J\to\infty$.

\begin{proposition}\label{theorem: frequentist validation of undersmoothed credible bands in ill-posed problems}
Under Conditions \ref{Condition: prior mass condition} and \ref{Condition: for simplicity} for $\Pi_{\beta}$
with $p=2^{J}$, $X  =  \Sigma^{-1/2}  \mathrm{diag}  \{  \allowbreak \kappa_{l,k}  :  (l,k)  \in  \mathcal{I}(J)  \}$, and $\sigma_{0}=1$,
there exist positive constants $c_{1},c_{2}$
depending only on $C_{1}$ appearing in Condition \ref{Condition: prior mass condition},
$K$, and $\{\psi_{l,k}: (l,k) \in \mathcal{I}_{\infty}\}$
such that the following hold.
For $n\geq 2$,
we have
\[
\begin{split}
\left | \Pr  ( f_{0}\in \mathcal{C}_{w}(\hat{f}_{\infty} ,\hat{R}_{\alpha}) ) - (1-\alpha) \right | 
\leq \phi_{\Pi_{\beta}} \Big{(}c_{1} \sqrt{2^{J}\log n}\Big{)}
+  c_{1}n^{ - c_{2} 2^{J} }
+  \Pr(Y\not\in\tilde{H}_{3}).
\end{split}
\]
In addition, 
there exist positive constants $c_{3},c_{4}>0$ depending only on $\alpha$, $K$, and $\{\psi_{l,k}: (l,k) \in \mathcal{I}_{\infty}\}$
such that the following hold.
Assume that the right hand side above except $\Pr(Y\not\in\tilde{H}_{3})$ is smaller 
than $\min\{\alpha/2,(1-\alpha)/2\}$.
Then,
\[
\Pr(Y\not\in \tilde{H}_{3}) 
\leq c_{3}\left (  \mathrm{e}^{-c_{4}J\{J/(\overline{\kappa}_{J}\overline{w}_{J})^{2}\}u^{2}_{J}} + n^{-c_{2}2^{J}} \right )
\]
for sufficiently large $J$ depending only on $\alpha$, $\{w_{l}\}$, $K$, and $\{\psi_{l,k}: (l,k) \in \mathcal{I}_{\infty}\}$; 
and
the $L^{\infty}$-diameter of 
$\mathcal{C}_{w}^{B} (\hat{f}_{\infty},\hat{R}_{\alpha}) :=\mathcal{C}_{w}(\hat{f}_{\infty},\hat{R}_{\alpha})\cap \{f: \|f\|_{B^{s}_{\infty,\infty}} \leq B\}$ for any $B>0$ is bounded from above as
\[
	\sup_{ f,g \in \mathcal{C}_{w}^{B} (\hat{f}_{\infty},\hat{R}_{\alpha})} \| f-g \|_{\infty} 
	\leq c_{3} \left (    v_{J} \sqrt{\frac{2^{J}J}{\underline{\kappa}^{2}_{J}n}}+ 2^{-Js}B \right ) 
\]
with probability at least $1-c_{1} n^{ - c_{2} 2^{J}}$.
\end{proposition}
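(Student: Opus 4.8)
The plan is to reduce the linear inverse problem to a whitened truncated indirect Gaussian sequence model and invoke Theorem \ref{theorem: frequentist validation of Bayesian credible rectangles}, mirroring the proof of Proposition \ref{theorem: frequentist evaluation of multiscale Castillo--Nickl credible bands}. First I would use the wavelet--vaguelette decomposition to write the data projections $Z_{l,k}:=\int v^{(1)}_{l,k}\,dY$ as $Z_{l,k}=\kappa_{l,k}\beta_{0,l,k}+n^{-1/2}\int v^{(1)}_{l,k}\,dW$ for every $(l,k)\in\mathcal{I}_{\infty}$, using $\langle K(f_{0}),v^{(1)}_{l,k}\rangle=\kappa_{l,k}\langle f_{0},\psi_{l,k}\rangle$; in particular there is \emph{no} bias in the truncated model. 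Stacking $(l,k)\in\mathcal{I}(J)$ and whitening by $\Sigma^{-1/2}$ turns the truncated indirect model into $\Sigma^{-1/2}Z=X\beta_{0}+\tilde{\varepsilon}$ with $X=\Sigma^{-1/2}\,\mathrm{diag}\{\kappa_{l,k}:(l,k)\in\mathcal{I}(J)\}$, $\tilde{\varepsilon}\sim\mathcal{N}(0,I_{2^{J}})$, $\sigma_{0}=1$, $p=2^{J}$, and $r=0$; since the hypotheses supply Conditions \ref{Condition: prior mass condition} and \ref{Condition: for simplicity} and the remaining Conditions \ref{Condition: marginal posterior contraction}, \ref{Condition: residual condition}, \ref{Condition: moment condition}(b) hold trivially, Theorem \ref{theorem: frequentist validation of Bayesian credible rectangles} applies with $\delta_{1}=\delta_{2}=\delta_{3}=0$, $\tau=0$, $\zeta_{n}=n^{-c_{2}2^{J}}$. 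A direct computation gives $\hat{\beta}_{l,k}=\kappa_{l,k}^{-1}Z_{l,k}=\langle\hat{f}_{\infty},\psi_{l,k}\rangle$ for $(l,k)\in\mathcal{I}(J)$, and since functions with wavelet support in $\mathcal{I}(J)$ see only the level-$(\le J-1)$ part of the band, the radius $\hat{R}_{\alpha}$ defined through the truncated quasi-posterior coincides with the $\hat{R}_{\alpha}$ that Theorem \ref{theorem: frequentist validation of Bayesian credible rectangles} produces for this model.

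Second, I would decompose $\mathcal{I}_{\infty}=\mathcal{I}(J)\sqcup\{(l,k):l\ge J,\ 0\le k\le 2^{l}-1\}$ and note that, since $\langle f_{0}-\hat{f}_{\infty},\psi_{l,k}\rangle=(\beta_{0}-\hat{\beta})_{l,k}$ on $\mathcal{I}(J)$, the event $\{f_{0}\in\mathcal{C}_{w}(\hat{f}_{\infty},\hat{R}_{\alpha})\}$ equals $\{\beta_{0}\in I(\hat{\beta},\hat{R}_{\alpha})\}\cap\tilde{H}_{3}$; hence $|\Pr(f_{0}\in\mathcal{C}_{w}(\hat{f}_{\infty},\hat{R}_{\alpha}))-\Pr(\beta_{0}\in I(\hat{\beta},\hat{R}_{\alpha}))|\le\Pr(Y\notin\tilde{H}_{3})$. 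Applying Theorem \ref{theorem: frequentist validation of Bayesian credible rectangles} gives $|\Pr(\beta_{0}\in I(\hat{\beta},\hat{R}_{\alpha}))-(1-\alpha)|\le\phi_{\Pi_{\beta}}(c_{1}\sqrt{2^{J}\log n})+c_{1}n^{-c_{2}2^{J}}$, which together with the previous display produces the claimed coverage bound, and it also yields, on an event of probability $\ge 1-c_{1}n^{-c_{2}2^{J}}$, the two-sided bounds $c_{4}\downlambda^{1/2}\upw^{-1}\sqrt{\log p}\le\hat{R}_{\alpha}\le c_{3}\uplambda^{1/2}\Ep[\max_{1\le i\le p}|N_{i}/w_{i}|]$. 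Here near-orthogonality of the wavelet--vaguelette system gives $\Sigma\asymp n^{-1}I$, hence $\overline{\kappa}_{J}^{-2}n^{-1}\lesssim\downlambda\le\uplambda\lesssim\underline{\kappa}_{J}^{-2}n^{-1}$, so $\downlambda^{1/2}\gtrsim(\overline{\kappa}_{J}\sqrt{n})^{-1}$ and $\uplambda^{1/2}\lesssim(\underline{\kappa}_{J}\sqrt{n})^{-1}$.

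The hard part will be bounding $\Pr(Y\notin\tilde{H}_{3})$. I would first observe that $\langle f_{0}-\hat{f}_{\infty},\psi_{l,k}\rangle=-\kappa_{l,k}^{-1}n^{-1/2}\int v^{(1)}_{l,k}\,dW$ is centered Gaussian with variance $\asymp(n\kappa_{l,k}^{2})^{-1}$, then union-bound over the $2^{l}$ coordinates at each level $l\ge J$ and use the Gaussian tail to get $\Pr(\sup_{l\ge J,\,k}|\langle f_{0}-\hat{f}_{\infty},\psi_{l,k}\rangle|/w_{l}>t)\lesssim\sum_{l\ge J}2^{l}\exp(-c\,\underline{\kappa}_{l}^{2}w_{l}^{2}nt^{2})$; since $\underline{\kappa}_{l}w_{l}\ge u_{J}\sqrt{l}$ for $l\ge J$, this is $\lesssim\exp(-c\,u_{J}^{2}Jnt^{2})$ once $u_{J}^{2}nt^{2}\gtrsim 1$. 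Taking $t=\hat{R}_{\alpha}$ and inserting the lower bound $\hat{R}_{\alpha}\gtrsim\sqrt{J}/(\overline{\kappa}_{J}\overline{w}_{J}\sqrt{n})$ gives $u_{J}^{2}n\hat{R}_{\alpha}^{2}\gtrsim\{J/(\overline{\kappa}_{J}\overline{w}_{J})^{2}\}u_{J}^{2}$, which exceeds a constant precisely by the standing assumption $1\le\{J^{1/2}/(\overline{\kappa}_{J}\overline{w}_{J})\}u_{J}$; hence $\Pr(Y\notin\tilde{H}_{3})\lesssim\exp(-c_{4}J\{J/(\overline{\kappa}_{J}\overline{w}_{J})^{2}\}u_{J}^{2})+c_{1}n^{-c_{2}2^{J}}$, the last term absorbing the failure event of the $\hat{R}_{\alpha}$ lower bound. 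The delicate point is to coordinate the Gaussian maximum tail with the probabilistic lower bound on $\hat{R}_{\alpha}$ so that the exponent comes out exactly as stated.

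Finally, for the $L^{\infty}$-diameter I would take $f,g\in\mathcal{C}_{w}^{B}(\hat{f}_{\infty},\hat{R}_{\alpha})$ and split $\|f-g\|_{\infty}\lesssim\sum_{l\ge J_{0}-1}2^{l/2}\max_{k}|\langle f-g,\psi_{l,k}\rangle|$ by wavelet localization. For $l\ge J$, $\|f\|_{B^{s}_{\infty,\infty}}\vee\|g\|_{B^{s}_{\infty,\infty}}\le B$ forces $\max_{k}|\langle f-g,\psi_{l,k}\rangle|\le 2B2^{-l(s+1/2)}$, so the tail sums to $\lesssim B2^{-Js}$. For $J_{0}-1\le l\le J-1$, band membership forces $\max_{k}|\langle f-g,\psi_{l,k}\rangle|\le 2w_{l}\hat{R}_{\alpha}$, so this piece is $\lesssim\hat{R}_{\alpha}\overline{w}_{J}2^{J/2}$; inserting $\hat{R}_{\alpha}\lesssim(\underline{\kappa}_{J}\sqrt{n})^{-1}$ (which follows since $\Ep[\max_{i}|N_{i}/w_{i}|]\lesssim 1$ because the weights dominate $\sqrt{l}$) together with $\overline{w}_{J}\le v_{J}\sqrt{J}/\underline{\kappa}_{J}$ gives $\lesssim v_{J}\sqrt{2^{J}J/(\underline{\kappa}_{J}^{2}n)}$, on the same event of probability $\ge 1-c_{1}n^{-c_{2}2^{J}}$. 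The mildly versus severely ill-posed distinction enters only through the eventual magnitudes of $\underline{\kappa}_{J},\overline{\kappa}_{J},u_{J},v_{J}$, hence affects the resulting rates but not the argument above, and the remaining routine estimates would be relegated to the supplement.
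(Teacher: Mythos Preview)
Your proposal is correct and follows essentially the same route as the paper: whiten the truncated indirect sequence model, apply Theorem \ref{theorem: frequentist validation of Bayesian credible rectangles} with $r=0$, Gaussian errors and $\delta_{1}=\delta_{2}=\delta_{3}=0$, split $\mathcal{I}_{\infty}$ into low and high frequencies, and control the high-frequency event $\tilde{H}_{3}^{c}$ via the lower bound on $\hat{R}_{\alpha}$. The only methodological difference is that you bound $\Pr(Y\notin\tilde{H}_{3})$ by a direct union bound on individual Gaussian tails, whereas the paper first invokes Anderson's lemma to dominate the correlated vaguelette noise by i.i.d.\ Gaussians and then applies the Borell-type concentration inequality for Gaussian maxima; both arguments deliver the same exponent $J\{J/(\overline{\kappa}_{J}\overline{w}_{J})^{2}\}u_{J}^{2}$.

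One small slip in your diameter step: combining $\hat{R}_{\alpha}\lesssim(\underline{\kappa}_{J}\sqrt{n})^{-1}$, $\overline{w}_{J}\le v_{J}\sqrt{J}/\underline{\kappa}_{J}$, and the factor $2^{J/2}$ actually yields $v_{J}\sqrt{2^{J}J}/(\underline{\kappa}_{J}^{2}\sqrt{n})$, carrying an extra $\underline{\kappa}_{J}^{-1}$ relative to what you wrote. The paper avoids the crude bound $\sum_{l}2^{l/2}w_{l}\lesssim \overline{w}_{J}2^{J/2}$ and instead factors $\sum_{l}2^{l/2}w_{l}=\sum_{l}2^{l/2}\sqrt{l}\,(w_{l}/\sqrt{l})$, bounding $\max_{l}w_{l}/\sqrt{l}$ directly; also note that your justification ``the weights dominate $\sqrt{l}$'' for $\Ep[\max_{i}|N_{i}/w_{i}|]\lesssim 1$ imports an assumption stated only for Proposition \ref{theorem: frequentist evaluation of multiscale Castillo--Nickl credible bands}, so make explicit which hypothesis on $\{w_{l}\}$ you are using here.
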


\begin{proof}[Proof sketch of Proposition \ref{theorem: frequentist validation of undersmoothed credible bands in ill-posed problems}]
The proof  is  almost the same as that of Proposition 
\ref{theorem: frequentist evaluation of multiscale Castillo--Nickl credible bands}, but it requires an additional analysis due to  the non-orthogonality of $\{v^{(1)}_{l,k}: (l,k)\in\mathcal{I}_{\infty}\}$.
First, we transform the indirect Gaussian white noise model into an indirect Gaussian sequence model via $\{v_{l,k}^{(1)}:(l,k)\in\mathcal{I}_{\infty}\}$:
$\tilde{Y}_{l,k} = \kappa_{l,k} \beta_{0,l,k} + \tilde{\varepsilon}_{l,k}, \ (l,k) \in \mathcal{I}_{\infty},$
where $\beta_{0,l,k}:=\langle f_{0}, \psi_{l,k} \rangle$ for $(l,k)\in\mathcal{I}_{\infty}$
and $\tilde{\varepsilon}_{l,k}$ are (dependent) jointly Gaussian variables. 
Then
\[
\Pr(f_{0} \in \mathcal{C}_{w}(\hat{f}_{\infty},\hat{R}_{\alpha}))
= \Pr \left( \sup_{(l,k)\in\mathcal{I}_{\infty}} | \kappa^{-1}_{l,k}\tilde{Y}_{l,k} - \beta_{0,l,k} | / w_{l} \leq \hat{R}_{\alpha}\right).
\]
Second, we apply Theorem \ref{theorem: frequentist validation of Bayesian credible rectangles}.
Let $\tilde{Y}_{\infty}=\{\tilde{Y}_{l,k}:(l,k)\in\mathcal{I}_{\infty}\}$
and observe that 
$\Pr(Y\not\in \tilde{H}_{3})=\Pr(\tilde{Y}_{\infty}\not\in\tilde{H}'_{3})$ with
$\tilde{H}'_{3}=\{\tilde{Y}_{\infty}: \sup_{J\leq l,0\leq k \leq 2^{l}-1} | \kappa^{-1}_{l,k} \tilde{Y}_{l,k} - \beta_{0,l,k}| / w_{l} 
\leq \hat{R}_{\alpha}\}$.
Then
\[
\left | 
\Pr(f_{0}\in\mathcal{C}_{w}(\hat{f}_{\infty},\hat{R}_{\alpha}))  -
\Pr \left ( \max_{(l,k)\in\mathcal{I}(J)} | \kappa^{-1}_{l,k} \tilde{Y}_{l,k} - \beta_{0,l,k}| / w_{l} 
\leq \hat{R}_{\alpha}  \right ) 
\right | 
\leq
\Pr(\tilde{Y}_{\infty}\not\in\tilde{H}'_{3}).
\]
Consider the linear regression model with $p=2^{J}$, $Y=\Sigma^{-1/2}(\tilde{Y}_{J_{0}-1,0},\ldots,\tilde{Y}_{J-1,2^{J-1}-1})^{\top}$, 
$X=\Sigma^{-1/2}\mathrm{diag}\{\kappa_{l,k}:(l,k)\in\mathcal{I}(J)\}$,
$\beta_{0}=(\beta_{0,J_{0}-1,0},\ldots,\beta_{0,J-1,2^{J-1}-1})^{\top}$,
$r=0$, $\sigma_{0}=1$, and $\varepsilon=\Sigma^{-1/2}(\tilde{\varepsilon}_{J_{0}-1,0},\ldots,\tilde{\varepsilon}_{J-1,2^{J-1}-1})^{\top} \sim \mathcal{N}(0,I_{p})$. 
For this model, the OLS estimator is  $\hat{\beta} = (X^{\top}X)^{-1} X^{\top} Y = (\kappa_{l,k}^{-1}\tilde{Y}_{l,k})_{(l,k) \in \mathcal{I}(J)}$, and so
\[
\Pr\left(\max_{(l,k)\in\mathcal{I}(J)} |\kappa_{l,k}^{-1}\tilde{Y}_{l,k} - \beta_{0,l,k} |/w_{l} \leq \hat{R}_{\alpha} \right)
=
\Pr(\beta_{0} \in I(\hat{\beta},\hat{R}_{\alpha}))
\]
with weights $w_{l,k} = w_{l}$ for $(l,k) \in \mathcal{I}(J)$. 
Thus we can apply Theorem \ref{theorem: frequentist validation of Bayesian credible rectangles} to obtain bounds on 
$\Pr ( \max_{(l,k)\in\mathcal{I}(J)} | \kappa^{-1}_{l,k} \tilde{Y}_{l,k} - \beta_{0,l,k} | / w_{l} \leq \hat{R}_{\alpha} )$
and $\hat{R}_{\alpha}$.
It remains to bound $\Pr(\tilde{Y}_{\infty}\not\in\tilde{H}'_{3})$, which is similar to the final step of the proof of Proposition 
\ref{theorem: frequentist validation of undersmoothed credible bands in ill-posed problems}. 
The detail can be found in Appendix C.3 of \cite{supplement}.
\end{proof}

\begin{remark}[Coverage error rates in linear inverse problems]
Consider a locally log-Lipschitz prior with  locally log-Lipschitz constant $L=L_{n}$.
We assume a true function $f_{0}$ with $\|f_{0}\|_{B^{s}_{\infty,\infty}}\leq B$ for some $B=B_{n}$.
Set $J$ as follows: for a (positive) constant $c$ with  $c < 1/(2r)$,
\begin{align*}
	2^{J} = 
	\begin{cases}
	(n /\log n)^{1/(2s+2r+1)} &\text{in mildly ill-posed cases (Case M)}; \\
         c\log n &\text{in severely ill-posed cases (Case S)}.
	\end{cases}
\end{align*}
Set $w_{l}=(\max_{0\leq k \leq 2^{l}-1}\kappa_{l,k})^{-1}\sqrt{l}$ for $l\leq J-1$ and 
$w_{l} = u_{l} (\min_{0 \leq k \leq 2^{l}-1}\kappa_{l,k})^{-1}\sqrt{l}$ for $l\geq J$ with $u_{l}\uparrow\infty$ as $l\to\infty$.
Then we have
\begin{align*}
&| \Pr  ( f_{0}\in \mathcal{C}_{w}(\hat{f} ,\hat{R}_{\alpha}) ) - (1-\alpha) |
\leq
	\begin{cases}
		O(L_{n}(n/\log n)^{-s/(2s+2r+1)}) &\text{in Case M} \\
		O(L_{n}(\log n)^{-s}) &\text{in Case S}
	\end{cases}
\quad \text{and} \\
&\sup_{f,g \in \mathcal{C}_{w}^{B_{n}} (\hat{f}_{\infty},\hat{R}_{\alpha})} \| f-g \|_{\infty}  \leq
	\begin{cases}
		O(B_{n} (n / \log n )^{-s/(2s+2r+1)}) &\text{in Case M} \\
		O(B_{n} (\log n)^{-s}) &\text{in Case S}
	\end{cases}
,
\end{align*}
where the latter holds 	
with probability at least $1-c_{1} n^{ - c_{2}  2^{J}}$ (again the sequence $\{ w_{l} \}$ here depends on $n$ but we can apply Proposition 3.3; see Remark C.2 in \cite{supplement} for the detail).
\end{remark}

\subsection{Nonparametric regression model}
\label{section: credible bands in nonparametric regression model}

Finally we consider a nonparametric regression model
\begin{align*}
	Y_{i}=f_{0}(T_{i})+\varepsilon_{i},\ i=1,\ldots, n,
\end{align*}
where
$\varepsilon= (\varepsilon_{1} , \ldots, \varepsilon_{n})^{\top}$ is the vector of i.i.d.~error terms with mean zero and variance $\sigma^{2}_{0}$
and $T_{1},\dots,T_{n}$ are an i.i.d.~sample with values in $[0,1]$.
For simplicity, we assume that 
$\varepsilon$ and $\{T_{i}:i=1,\ldots,n\}$ are independent,
and
$\sigma_{0}$ does not depend on $n$.

We consider a sieve prior for $f_{0}$. To this end, we use $p$ basis functions $\{\psi^{p}_{j}(\cdot) : 1 \leq j \leq p \}$,
and constrict a credible band for $f$ of the form
\begin{align*}
	\mathcal{C}(\hat{f},\hat{R}_{\alpha}) = \left\{ f : \left\| \frac{f(\cdot)-\hat{f}(\cdot)}{\|\psi^{p}(\cdot)\|} \right\|_{\infty} 
				      \leq \hat{R}_{\alpha} \right\},
\end{align*}
where
$\hat{f}(\cdot):= \sum_{j=1}^{p}\psi^{p}_{j}(\cdot)\hat{\beta}_{j}$ 
with $\hat{\beta} := \mathrm{argmin}_{\beta} \sum_{i=1}^{n} ( Y_{i}- \sum_{j=1}^{p} \psi^{p}_{j} (T_{i}) \beta_{j} )^{2}$,
$\hat{R}_{\alpha}$ is chosen in such a way that
$\Pi_{f}\{ \mathcal{C}(\hat{f} , \hat{R}_{\alpha}) \mid Y \} = 1-\alpha $,
and
$\psi^{p}(\cdot):=(\psi^{p}_{1}(\cdot),\ldots, \psi^{p}_{p}(\cdot))^{\top}$.
We consider a prior $\Pi_{f}$ of $f$ induced from a sieve prior $\Pi_{\beta}$ on $\R^{p}$
via the map
$ (\beta_{1}, \ldots, \beta_{p}) \mapsto \sum_{j=1}^{p}\beta_{j}\psi^{p}_{j}(\cdot). $

The setting of the nonparametric regression is different from that of Section \ref{section: frequentist validation of Bayesian credible rectangles} in that the regressors $T_{1},\dots,T_{n}$ are stochastic. 
Due to this additional randomness, we need an additional analysis
to develop bounds on the coverage error and the $L^{\infty}$-diameter of the band.
To this end,
we modify Conditions \ref{Condition: prior mass condition} and \ref{Condition: for simplicity},
and add conditions on the basis functions
Let
$\tilde{\psi}^{p}(\cdot):= \psi^{p}(\cdot) / \| \psi^{p}(\cdot) \| $,
$\xi_{p}:=\sup_{t\in [0,1]}\|\psi^{p}(t)\|$, and 
 $\beta_{0} := \mathrm{argmin}_{\beta} \Ep [( f_{0}(T_{1}) - \psi^{p}(T_{1})^{\top}\beta )^{2}]$.
For $R>0$,
let 
\begin{align*}
	\tilde{B}(R) := \{\beta: \| \beta - \beta_{0} \| \leq n^{-1/2} R \} \ \text{ and } \
	\tilde{\phi}_{\Pi_{\beta}}(R) := 1- \inf_{\beta , \tilde{\beta} \in \tilde{B}(R) } \frac{\pi(\beta)}{\pi(\tilde{\beta})}.
\end{align*}

\begin{condition}\label{Condition: prior mass condition 2}
	There exists a positive constant $C_{1}$ such that $ \pi(\beta_{0}) \geq n^{ - C_{1} p }$.
\end{condition}

\begin{condition}\label{Condition: for simplicity 2}
	The inequality $\tilde{\phi}_{\Pi_{\beta}} (1/\sqrt{n}) \leq 1/2$ holds.
\end{condition}

\begin{condition}\label{Condition: basis eigenvalues}
	There exist strictly positive constants $\underline{b}$ and $\overline{b}$ such that
	the eigenvalues of the $p\times p$ matrix $( \Ep [\psi^{p}_{i}(T_{1}) \psi^{p}_{j}(T_{1})] )_{1 \le i,j \le p}$ are 
	included in $[\underline{b}^{2} , \overline{b}^{2}]$.
\end{condition}

\begin{condition}\label{Condition: basis conditions}
	There exist positive constants $C_{4}$ and $C_{5}$ such that  
\begin{align*}
	\log \xi_{p} \leq C_{4} \log p \ \text{ and } \
	\log \sup_{t\neq t' \in [0,1]}\frac{\| \tilde{\psi}^{p}(t) -\tilde{\psi}^{p}(t')\|}{|t-t'|} \leq C_{5} \log p.
\end{align*}
\end{condition}

Conditions \ref{Condition: prior mass condition 2} and \ref{Condition: for simplicity 2}
are versions of Conditions \ref{Condition: prior mass condition} and \ref{Condition: for simplicity} under stochastic regressors. 
Condition \ref{Condition: basis eigenvalues} is standard.
Condition \ref{Condition: basis conditions} is not restrictive; 
for example,
this condition holds for Fourier series, Spline series, CDV wavelets, and local polynomial partition series;
see \cite{BelloniChernozhukovChetverikovKato(2015)} for details.

The following proposition derives bounds on the coverage error and the $L^{\infty}$-diameter of $\mathcal{C}(\hat{f}, \hat{R}_{\alpha})$.
Let
$\tau_{2} := \sqrt{ \Ep[ (f_{0}(T_{1}) - \psi^{p}(T_{1})^{\top}\beta_{0} )^2] }$, 
$\tau_{\infty} := \| f_{0}(\cdot) - \psi^{p}(\cdot)^{\top} \beta_{0} \|_{\infty}$, and 
$\tau := \left\| | f_{0}(\cdot) -\psi^{p}(\cdot)^{\top} \beta_{0}|  / \|\psi^{p}(\cdot)\| \right\|_{\infty}$.
These parameters quantify the approximation errors by the basis functions.

\begin{proposition}\label{proposition: credible bands in nonparametric regression model with general basis functions}
Under Conditions \ref{Condition: prior mass condition 2}-\ref{Condition: basis conditions} together with Conditions \ref{Condition: marginal posterior contraction} and \ref{Condition: moment condition},
there exist positive constants $c_{1},c_{2}$ depending only on 
$C_{1},\ldots,C_{5}$, $\underline{b}, \overline{b}$, and $q$ appearing in these conditions 
such that the following hold. 
For $n \geq 2$ and any sufficiently small $\delta>0$,
we have
\begin{equation}
\begin{split}
&| \Pr  (f_{0} \in \mathcal{C}(\hat{f},\hat{R}_{\alpha}) ) - (1-\alpha) | \\
&\quad \leq \tilde{\phi}_{\Pi_{\beta}}\left( c_{1} \sqrt{p\log n}\right) + \delta_{2} + \delta_{3} 
+ c_{1} (n^{-2\delta} + \delta_{1}p\log n + \zeta_{n} + \gamma_{n}),
\end{split}
\label{eq: coverage error in nonparametric regression}
\end{equation}
where
\begin{align*}
\gamma_{n}
	&:= \frac{n}{\log n}\frac{\tau^{2}_{2}}{p}
	+ \max\left\{1, \left( p \xi_{p}^{2} / n \right)^{1/2}\right\} \tau_{\infty} n^{\delta}\log p
	+ \sqrt{n}\tau \sqrt{\log p} \quad \text{and} \\
	\zeta_{n}
	&:=
	\begin{cases}
	n^{\delta}(\log n)^{7/6}
	\max\left\{\left( \frac{\xi_{p}^{2}}{n} \right)^{1/2}n^{1/q}(\log n)^{1/3}
		, \left( \frac{\xi_{p}^{2}}{n} \right)^{1/6}
	\right\}
	& \text{ under Condition \ref{Condition: moment condition} (a)}\\
	n^{\delta}(\log n)^{7/6}\left( \xi_{p}^{2} / n\right)^{1/6}
	& \text{ under Condition \ref{Condition: moment condition} (b)}
	\end{cases}
	.
\end{align*}
In addition,
there exists a positive constant $c_{3}$ depending only on $\alpha$ and $\underline{b}$ such that the following holds:
provided that the right hand side on (\ref{eq: coverage error in nonparametric regression}) is smaller than $\alpha/2$,
we have
\[
\sup_{f,g \in \mathcal{C}(\hat{f},\hat{R}_{\alpha})} \| f - g \|_{\infty} \leq c_{3}  \sqrt{ \xi_{p}^{2}(\log p) /n}
\]
with probability at least $1-\delta_{3}-c_{1}\{\sqrt{n}\tau \sqrt{\log p} + n^{-c_{2}p}\}$.
\end{proposition}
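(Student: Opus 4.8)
\emph{The plan} is to condition on the design, run the proof of Theorem~\ref{theorem: frequentist validation of Bayesian credible rectangles} conditionally, and then separately control (i) the randomness of $T_1,\dots,T_n$, (ii) the passage from $\sup_{t\in[0,1]}$ to a finite maximum, and (iii) the three approximation errors $\tau_2,\tau_\infty,\tau$. Write $\mathbb{X}:=(\psi^p_j(T_i))_{1\le i\le n,\,1\le j\le p}$, so that the sieve model is $Y_i=\psi^p(T_i)^\top\beta+\varepsilon_i$, i.e.\ of the form \eqref{eq: linear model} with design $\mathbb{X}$ and, from the frequentist viewpoint, bias $r_i:=f_0(T_i)-\psi^p(T_i)^\top\beta_0$. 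First I would fix a good design event $\mathcal{E}_1$: by Conditions~\ref{Condition: basis eigenvalues} and \ref{Condition: basis conditions} and a matrix Bernstein inequality for $n^{-1}\mathbb{X}^\top\mathbb{X}-\Ep[\psi^p(T_1)\psi^p(T_1)^\top]$ (summands of operator norm $\lesssim\xi_p^2/n$), on an event of probability at least $1-n^{-cp}$ the eigenvalues of $n^{-1}\mathbb{X}^\top\mathbb{X}$ lie in $[\underline{b}^2/2,2\overline{b}^2]$; on $\mathcal{E}_1$ one has $\uplambda\sim\downlambda\sim n^{-1}$, $\sigma_0^{-p}\sqrt{\det(\mathbb{X}^\top\mathbb{X})}\le n^{cp}$, and $\|\mathbb{X}(\beta-\beta_0)\|\sim\sqrt n\,\|\beta-\beta_0\|$, so Conditions~\ref{Condition: prior mass condition 2} and \ref{Condition: for simplicity 2} imply Conditions~\ref{Condition: prior mass condition} and \ref{Condition: for simplicity} for the conditional problem (with constants adjusted) and $\tilde\phi_{\Pi_\beta}(c\sqrt{p\log n})$ matches $\phi_{\Pi_\beta}(c'\sqrt{p\log n})$. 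Moreover $\|r\|^2=\sum_i r_i^2$ concentrates around $n\tau_2^2$ and, since $\mathbb{X}(\mathbb{X}^\top\mathbb{X})^{-1}\mathbb{X}^\top$ is an orthogonal projection, $\|\mathbb{X}(\mathbb{X}^\top\mathbb{X})^{-1}\mathbb{X}^\top r\|\le\|r\|\lesssim\sqrt n\,\tau_2$, which verifies Condition~\ref{Condition: residual condition} and produces the term $(n/\log n)(\tau_2^2/p)$ of $\gamma_n$.

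\emph{Discretisation and the posterior side.} By the second half of Condition~\ref{Condition: basis conditions}, $t\mapsto\tilde\psi^p(t)$ is $p^{C_5}$-Lipschitz into the unit sphere of $\R^p$; I would take an $\eta$-net $\{t_1,\dots,t_M\}$ of $[0,1]$ with $\eta\asymp n^{-1}p^{-C_5}$, so $M\lesssim n\,p^{C_5}$ and $\log(Mp)\lesssim\log(pn)$. Using the lower bound \eqref{eq: lower bound on R in theorem} for $\hat R_\alpha$ (valid on $\mathcal{E}_1$, where $\downlambda\sim n^{-1}$), replacing $\sup_{t\in[0,1]}$ by $\max_{1\le m\le M}$ in both the posterior event $\{\,|\psi^p(t)^\top(\beta-\hat\beta)|/\|\psi^p(t)\|\le\hat R_\alpha\ \forall t\,\}$ and in the frequentist event changes the relevant probabilities by at most an $n^{-2\delta}$-order quantity after one application of Gaussian anti-concentration. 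After discretisation the posterior probability of the band equals the $\Pi_\beta(\cdot\mid Y)$-probability that $\max_m|\tilde\psi^p(t_m)^\top(\beta-\hat\beta)|\le\hat R_\alpha$; because Proposition~\ref{proposition: Berry Esseen type bound on posterior distribution} is stated in total variation, pushing it forward along $\beta\mapsto(\tilde\psi^p(t_m)^\top(\beta-\hat\beta))_{m\le M}$ shows this equals $\Pr(\max_m|Z_m|\le\hat R_\alpha)$ up to $\phi_{\Pi_\beta}(c_1\sqrt{p\log n})+c_1(\delta_1 p\log n+\delta_2+n^{-c_2 p})$, where $(Z_m)_m$ is centred Gaussian with $\Cov(Z_m,Z_l)=\sigma_0^2\,\tilde\psi^p(t_m)^\top(\mathbb{X}^\top\mathbb{X})^{-1}\tilde\psi^p(t_l)$; the substitution of $\sigma_0^2$ by the posterior value of $\sigma^2$ is absorbed into $\delta_1 p\log n$ exactly as in Section~\ref{section: frequentist validation of Bayesian credible rectangles} using Condition~\ref{Condition: marginal posterior contraction}, whose exceptional mass and probability contribute the terms $\delta_2,\delta_3$.

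\emph{The frequentist side.} Write $f_0(t)-\hat f(t)=r(t)+\psi^p(t)^\top(\beta_0-\hat\beta)$ with $r(t):=f_0(t)-\psi^p(t)^\top\beta_0$, $|r(t)|/\|\psi^p(t)\|\le\tau$, and $\hat\beta-\beta_0=(\mathbb{X}^\top\mathbb{X})^{-1}\mathbb{X}^\top(\varepsilon+r)$. The contribution of $r(t)$ enters through Gaussian anti-concentration, yielding the term $\sqrt n\,\tau\sqrt{\log p}$ of $\gamma_n$; the design-induced bias $\tilde\psi^p(t)^\top(\mathbb{X}^\top\mathbb{X})^{-1}\mathbb{X}^\top r$ is bounded uniformly in $t$, on a high-probability event, by $\max\{1,(p\xi_p^2/n)^{1/2}\}\,\tau_\infty n^\delta\log p$ via a self-normalised inequality combined with the Lipschitz/net estimate (the middle term of $\gamma_n$). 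The remaining stochastic part is $\tilde\psi^p(t_m)^\top(\mathbb{X}^\top\mathbb{X})^{-1}\mathbb{X}^\top\varepsilon=a_m^\top\varepsilon$ with $a_m:=\mathbb{X}(\mathbb{X}^\top\mathbb{X})^{-1}\tilde\psi^p(t_m)$, so $\{\max_m|a_m^\top\varepsilon|\le L\}$ is a hyperrectangle event in the $2M$ linear functionals $\pm a_m^\top$; the high-dimensional CLT of \cite{CCK(2013),CCK(2016)} then gives $\Pr(\max_m|a_m^\top\varepsilon|\le L)=\Pr(\max_m|a_m^\top(\sigma_0 N)|\le L)+O(\zeta_n)$ uniformly in $L$, the bound $\zeta_n$ being driven by the normalised spread $\max_m\|a_m\|_\infty/\|a_m\|\lesssim\xi_p/\sqrt n$ (on $\mathcal{E}_1$), $\log(Mp)\lesssim\log(pn)$, and Condition~\ref{Condition: moment condition} (the moment form under (a), with the $n^{1/q}$ truncation factor appearing in the first branch, and the sub-Gaussian form under (b)). Crucially $\Cov(a_m^\top(\sigma_0 N),a_l^\top(\sigma_0 N))=\sigma_0^2\,\tilde\psi^p(t_m)^\top(\mathbb{X}^\top\mathbb{X})^{-1}\tilde\psi^p(t_l)$ coincides with the covariance of $(Z_m)_m$ because $(\mathbb{X}^\top\mathbb{X})^{-1}\mathbb{X}^\top\mathbb{X}(\mathbb{X}^\top\mathbb{X})^{-1}=(\mathbb{X}^\top\mathbb{X})^{-1}$, so the two Gaussian comparison probabilities are identical and cancel on subtraction. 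Collecting the pieces, inserting $\pm\tau$ to absorb the in-band approximation error, and integrating over the design (the complement of $\mathcal{E}_1$ costing $n^{-cp}$, absorbed into $n^{-2\delta}$) gives \eqref{eq: coverage error in nonparametric regression}.

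\emph{The $L^\infty$-diameter, and the hard part.} If $f,g\in\mathcal{C}(\hat f,\hat R_\alpha)$ then $\|f-g\|_\infty\le\|f-\hat f\|_\infty+\|g-\hat f\|_\infty\le 2\hat R_\alpha\,\xi_p$ since $\|\psi^p(\cdot)\|\le\xi_p$; applying \eqref{eq: upper bound on R in theorem} conditionally on $\mathcal{E}_1$, where $\uplambda\sim n^{-1}$ and $\Ep[\max_m|N_m|]\lesssim\sqrt{\log M}\lesssim\sqrt{\log p}$, gives $\hat R_\alpha\lesssim\sqrt{(\log p)/n}$, hence $\sup_{f,g}\|f-g\|_\infty\lesssim\sqrt{\xi_p^2(\log p)/n}$; the stated failure probability is the sum of $\delta_3$, the exceptional probability of Theorem~\ref{theorem: frequentist validation of Bayesian credible rectangles} (which, once the bias is accounted for through $\tau$, is $\lesssim\sqrt n\,\tau\sqrt{\log p}+n^{-c_2p}$), and $\Pr(\mathcal{E}_1^c)$. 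I expect the main obstacle to be the simultaneous bookkeeping of $\tau_2,\tau_\infty,\tau$ — each entering $\gamma_n$ through a different mechanism (Condition~\ref{Condition: residual condition} via $\|r\|$, a uniform-in-$t$ empirical-process bound on the design-induced bias, and Gaussian anti-concentration) — together with choosing the net fine enough that the polylog$(pn)$ factors in $\zeta_n$ stay polylogarithmic while the design-dependent eigenvalues remain pinned near their population values; the self-normalised empirical-process bound on $\sup_t|\tilde\psi^p(t)^\top(\mathbb{X}^\top\mathbb{X})^{-1}\mathbb{X}^\top r|$, needed to produce the factor $\max\{1,(p\xi_p^2/n)^{1/2}\}$ rather than a cruder $p^{1/2}\xi_p/\sqrt n$, is the single most delicate estimate.
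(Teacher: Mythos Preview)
Your proposal is correct in outline and would lead to the stated bounds, but it differs from the paper's proof in the central Gaussian-approximation step. You discretise $[0,1]$ via a fine net and then apply the hyperrectangle CLT of \cite{CCK(2013),CCK(2016)} to the resulting $2M$ linear functionals, mirroring the proof of Theorem~\ref{theorem: frequentist validation of Bayesian credible rectangles}. The paper instead works directly with the supremum over $[0,1]$: it uses the coupling results of \cite{CCK(2014a)} (their Corollary~2.2 and Proposition~3.3) to produce a random variable $\tilde Z \overset{d}{=} \|\tilde\psi^p(\cdot)^\top B^{-1/2}N_{(p)}\|_\infty$ close to $\|\tilde\psi^p(\cdot)^\top B^{-1}X^\top\varepsilon/\sqrt n\|_\infty$, together with anti-concentration for suprema of Gaussian processes and Dudley's entropy integral to control the expected supremum. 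Both routes exploit Condition~\ref{Condition: basis conditions} --- you through the net cardinality $\log M\lesssim\log(pn)$, the paper through the metric-entropy bound --- and both arrive at the same $\zeta_n$. For the design-induced bias, the paper invokes the uniform-linearisation Lemma~4.2 of \cite{BelloniChernozhukovChetverikovKato(2015)}, which simultaneously bounds $\|\tilde\psi^p(\cdot)^\top\sqrt n(\hat\beta-\beta_0)-\tilde\psi^p(\cdot)^\top B^{-1}X^\top\varepsilon/\sqrt n\|_\infty$ and delivers the middle term of $\gamma_n$; this is the result behind your ``self-normalised inequality'' and is indeed the most delicate estimate, as you anticipate. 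Your route has the merit of making the reduction to Theorem~\ref{theorem: frequentist validation of Bayesian credible rectangles} transparent, at the cost of the net bookkeeping; the paper's route avoids discretisation entirely but leans on the heavier empirical-process machinery of \cite{CCK(2014a)} and \cite{BelloniChernozhukovChetverikovKato(2015)}. One minor point: the term $(n/\log n)\tau_2^2/p$ enters in the paper not by verifying Condition~\ref{Condition: residual condition} via concentration of $\|r\|$, but simply as a Markov-inequality bound on $\Pr(\|\mathbb{X}(\mathbb{X}^\top\mathbb{X})^{-1}\mathbb{X}^\top r\|\ge c\sqrt{p\log n})$, which is cleaner.
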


We note that the proof of Proposition \ref{proposition: credible bands in nonparametric regression model with general basis functions} does not use a lower bound on $\hat{R}_{\alpha}$ in Theorem \ref{theorem: frequentist validation of Bayesian credible rectangles} (more precisely, its version for random designs). Hence we do not have to assume
that the right hand side on (\ref{eq: coverage error in nonparametric regression}) is smaller than $(1-\alpha)/2$;
see the discussion after Theorem \ref{theorem: frequentist validation of Bayesian credible rectangles}.

\begin{remark}[Magnitudes of $\xi_{p}$, $\tau_{2}$, $\tau_{\infty}$, and $\tau$]
For typical basis functions including Fourier series, spline series, and CDV wavelets,
we have
$\xi_{p}\lesssim \sqrt{p}$; see Section 3 in \cite{BelloniChernozhukovChetverikovKato(2015)}.
If $f_{0}$ is in the H\"{o}lder--Zygmund space with  smoothness level $s > 0$, then
$\tau_{2}\sim\tau_{\infty}\sim p^{-s}$ for an $S$-regular CDV wavelet basis with $S > s$. 
For other bases and other function classes,
bounds on $\tau_{2}$ and $\tau_{\infty}$ can be found in approximation theory; 
see e.g. \cite{DevoreandLorentz(1993)} and Section 3 in \cite{BelloniChernozhukovChetverikovKato(2015)}.
Finally, for the Haar wavelet basis,
we have $\tau\sim \tau_{\infty}/\sqrt{p}$,
since $\tau \leq \tau_{\infty} / \inf_{t\in [0,1]} \|\psi^{p}(t)\|$;
for periodic $S$-regular wavelets,
we also have $\tau \sim \tau_{\infty}/\sqrt{p}$ as shown in Appendix C.4.3 of  \cite{supplement}.
\end{remark}

\begin{remark}[Coverage error rates for the true function]
Consider the unknown variance case. 
Assume that there exists a constant $s > 1/2$ such that
$\tau_{2} \sim \tau_{\infty} \sim p^{-s}$,
$\tau \sim p^{-s-1/2}$,
and
$\xi_{p} \lesssim \sqrt{p}$.
Assume also that the error distribution is Gaussian (for the non-Gaussian case, add $\zeta_{n}$ to the bound on the coverage error). 
We use a locally log-Lipschitz prior with locally log-Lipschitz constant $L=L_{n}$ on $\beta$
and use the estimator $\hat{\sigma}^2=\hat{\sigma}^{2}_{\mathrm{u}}$ as in Proposition 2.3.
Take $p \sim (n /\log n)^{1/(2s+1)} b_{n}$ with a positive nondecreasing sequence $b_{n}=O(\log n)$.
In this case, we have
\begin{align*}
&| P(f_{0} \in \mathcal{C}(\hat{f} , \hat{R}_{\alpha}) ) - (1-\alpha) |  \\
&\leq C
	\left [ L_{n}\left(\frac{n}{\log n}\right)^{-s/(2s+1)}b_{n}^{1/2}
	+
	\left(\frac{n}{\log n} \right)^{-(s-1/2)/(2s+1)} b_{n}\log n
	+\frac{\log n}{b_{n}^{s+1/2}}\right ] \quad \text{and}  \\
&\sup_{f,g \in \mathcal{C}(\hat{f},\hat{R}_{\alpha})} \| f - g \|_{\infty}
	\leq C \left(\frac{n}{\log n}\right)^{-s/(2s+1)}b_{n}^{1/2},
\end{align*}
where the latter holds with  probability at least $1-c_{1}(\log n)/b_{n}^{s+1/2}$, and the constant $C$ is independent of $n$. 
\end{remark}

\begin{remark}[Coverage error rates for the surrogate function]
\label{remark: coverage errors for the surrogate function}
Consider coverage errors for the surrogate function $f_{0,p}:=\psi^{p}(\cdot)^{\top}\beta_{0}$ when the error distribution is Gaussian.
In this case, since $\tau_{\infty}=\tau_{2}=\tau=0$,  we have
\begin{align*}
	| \Pr ( f_{0,p} \in \mathcal{C}(\hat{f}, \hat{R}_{\alpha} )- (1-\alpha) | 
	&\leq O((n /\log n)^{-(s-1/2)/(2s+1)} b_{n} \log n) \quad \text{and} \\
	\sup_{f,g \in \mathcal{C}(\hat{f},\hat{R}_{\alpha})} \| f - g \|_{\infty}
	&\leq O((n / \log n )^{-s/(2s+1)}b_{n}^{1/2})
\end{align*}
where the latter holds with probability at least $1-c_{1}\exp\{ -c_{2} (n/\log n)^{1/(2s+1)}\}$.
This shows that Bayesian credible bands have coverage errors (for the surrogate function) 
decaying polynomially fast in the sample size $n$ in nonparametric regression models.
\end{remark}

\section{Proof of Theorem \ref{theorem: frequentist validation of Bayesian credible rectangles}}
\label{section: proof of frequentist validation of Bayesian credible rectangles}

\subsection{Supporting lemmas}

We begin with stating some supporting lemmas that will be used in the proof of Theorem \ref{theorem: frequentist validation of Bayesian credible rectangles}. 
They include
the high-dimensional CLT on hyperrectangles,
the anti-concentration inequality for the Gaussian distribution, 
Anderson's lemma,
and
the concentration inequality for the Gaussian maximum.

The high-dimensional CLT on hyperrectangles is stated as follows:
in the following lemma,
let $Z_{1},\ldots,Z_{n}$ be independent $p$-dimensional random vectors with mean zero.
Let $Z_{ij}$ ($i=1,\ldots,n$,$j=1,\ldots,p$) denote the $j$-th coordinate of $Z_{i}$.
Let $\tilde{Z_{1}},\ldots,\tilde{Z_{n}}$ be independent centered $p$-dimensional Gaussian vectors 
such that each $\tilde{Z_{i}}$ has the same covariance matrix as $Z_{i}$.
Let
$\mathcal{A}^{\mathrm{re}}$ be the class of all closed hyperrectangles in ${\R}^{p}$:
for any $A\in\mathcal{A}^{\mathrm{re}}$,
$A$ is of the form
$A=\{\beta\in\mathbb{R}^{p}: \underline{a}_{i}\leq \beta_{i}\leq \overline{a}_{i},\, 1\leq \forall i \leq p\}$
with $(\underline{a}_{1}, \ldots, \underline{a}_{p})^{\top} \in \R^{p}$ and $(\overline{a}_{1},\ldots, \overline{a}_{p})^{\top} \in \R^{p}$.
We assume the following three conditions:
\begin{enumerate}
	\item[H1.]\label{H1} There exists $ b >0$
		such that $n^{-1}\sum_{i=1}^{n}\Ep [Z_{ij}^{2}] \geq b$ for all $1\leq j \leq p$;
	\item[H2.]\label{H2} There exists a sequence $B_{n}\geq 1$ 
		such that $n^{-1}\sum_{i=1}^{n}\Ep[|Z_{ij}|^{2+k}]\leq B_{n}^{4}$ for all $1\leq  j \leq p$ and for $k=1,2$;
	\item[H3.]\label{H3} Either one of the following two conditions holds:
			\begin{enumerate}
			\item[(a)] There exists an integer $4\leq q < \infty$ 
				such that $\Ep[( \max_{1 \le j \le p}|Z_{ij}|/B_{n} )^{q}]\leq 1$ 
				for all $1\leq i \leq n$;
			\item[(b)] $\Ep[\exp ( |Z_{ij}|/B_{n})]\leq 2$ 
				for all $1\leq i \leq n$ and $1\leq j \leq p$.
			\end{enumerate}
\end{enumerate}

\begin{lemma}[High dimensional CLT on hyperrectangles; Proposition 2.1 in \cite{CCK(2016)}]\label{lemma: high dimensional CLT}
Let 
\begin{align*}
	\rho= \rho_{n} :=\sup_{A\in\mathcal{A}^{\mathrm{re}}}
	\left|\Pr \left(\sum_{i=1}^{n} Z_{i}/\sqrt{n}\in A\right)
	-\Pr\left(\sum_{i=1}^{n}\widetilde{Z_{i}}/\sqrt{n}\in A \right) \right|.
\end{align*}
Under Conditions H1-H3,
there exists a positive constant $\tilde{c}_{1}$ such that 
\begin{align*}
	\rho \leq
	\begin{cases}
		\tilde{c}_{1} \left( \frac{B_{n}^{2}\log^{7}(pn)}{ n} \right)^{1/6}
		+ \tilde{c}_{1} \left(  \frac{B_{n}^{2}\log^{3}(pn)} { n^{1-2/q}} \right)^{1/3}
	& \text{ under Condition H3 (a)}, \\
		\tilde{c}_{1}\left( \frac{B_{n}^{2}\log^{7}(pn)}{ n } \right)^{1/6}
	& \text{ under Condition H3 (b)}.
	\end{cases}
\end{align*}
The constant 
$\tilde{c}_{1}$
depends
only on $b$ appearing in Condition H1 and $q$ appearing in Condition H3.
\end{lemma}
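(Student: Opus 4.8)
Since this lemma is quoted essentially verbatim from \cite{CCK(2016)} (their Proposition 2.1), no genuinely new proof is needed; but were one to reconstruct it, the plan would be to run the Slepian--Stein (Lindeberg) interpolation argument of Chernozhukov, Chetverikov and Kato. First I would fix a closed hyperrectangle $A=\{\beta:\underline a_j\le\beta_j\le\overline a_j,\ 1\le j\le p\}$ and rewrite $\Pr(S_n\in A)$, with $S_n:=n^{-1/2}\sum_{i=1}^n Z_i$, as the probability that $\max_{1\le m\le 2p}V_m\le 0$, where $V_1,\dots,V_{2p}$ are the $2p$ affine functionals $\beta_j-\overline a_j$ and $\underline a_j-\beta_j$ evaluated at $S_n$; the Gaussian analogue $\widetilde S_n:=n^{-1/2}\sum_{i=1}^n\widetilde Z_i$ is treated the same way. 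Then I would replace the hard indicator $\mathbf 1\{\max_m V_m\le 0\}$ by a smooth surrogate $h(V):=g_0\bigl(\beta^{-1}\log\sum_{m=1}^{2p}e^{\beta V_m}\bigr)$, where $\beta>0$ is an inverse temperature and $g_0\in C^3(\R)$ is a smoothed step with transition width $\varepsilon>0$; the soft-max $\beta^{-1}\log\sum_m e^{\beta V_m}$ stays within $\beta^{-1}\log(2p)$ of $\max_m V_m$.

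The second step is the Lindeberg swap: replace $Z_1,\dots,Z_n$ by $\widetilde Z_1,\dots,\widetilde Z_n$ one block at a time and Taylor-expand the composite $x\mapsto h\circ(\mathrm{affine})(x/\sqrt n)$ to third order about the current partial sum. The zeroth-order terms telescope, the first-order terms vanish because $\Ep[Z_i]=\Ep[\widetilde Z_i]=0$, and the second-order terms cancel because $Z_i$ and $\widetilde Z_i$ share a covariance matrix. The surviving third-order remainders are bounded by a universal constant times $n^{-3/2}\sum_i\sum_{j,k,l}|\partial_{jkl}h|\,\bigl(\Ep|Z_{ij}Z_{ik}Z_{il}|+\Ep|\widetilde Z_{ij}\widetilde Z_{ik}\widetilde Z_{il}|\bigr)$. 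Here one uses the structural fact that the soft-max first derivatives form a probability vector, so that $\sum_{j,k,l}|\partial_{jkl}h|$ is dominated by a polynomial in $\beta$ and $\varepsilon^{-1}$ with coefficients that do \emph{not} grow with $p$, together with $\Ep|Z_{ij}Z_{ik}Z_{il}|\le\frac13(\Ep|Z_{ij}|^3+\Ep|Z_{ik}|^3+\Ep|Z_{il}|^3)$ and Condition H2 (for the Gaussian side, third moments are controlled by the common second moments). This is exactly the type of estimate carried out in the auxiliary lemmas of \cite{CCK(2013)}.

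Third, I would pass from the smoothed comparison back to $|\Pr(S_n\in A)-\Pr(\widetilde S_n\in A)|$ by paying two further errors: a soft-max bias of order $\beta^{-1}\log p$, and a ``regularization'' error from $g_0$ that is controlled by anti-concentration, namely Nazarov's inequality --- a centered Gaussian vector whose coordinate variances are bounded below by $b$ charges any $\varepsilon$-thickening of the boundary of a hyperrectangle with probability at most a constant (depending on $b$) times $\varepsilon\sqrt{\log p}$. One is then left to balance three error sources: the Lindeberg remainder (of order $n^{-1/2}$ times a power of $\beta$, $\varepsilon^{-1}$, $B_n$ and $\log(pn)$), the soft-max bias $\beta^{-1}\log p$, and the anti-concentration term $\varepsilon\sqrt{\log p}$. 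Choosing $\varepsilon$ and $\beta$ optimally (essentially $\varepsilon\asymp\beta^{-1}$ and $\beta$ a suitable power of $n^{1/2}/(B_n\log(pn))$) yields the rate $(B_n^2\log^7(pn)/n)^{1/6}$, which is the bound under Condition H3 (b).

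Finally, under the weaker moment condition H3 (a) I would prepend a truncation step: decompose each $Z_{ij}$ into its part on the event $\{\max_{j}|Z_{ij}|\le uB_n\}$ (suitably recentered) plus a remainder, apply the bounded/sub-exponential machinery above to the truncated vectors --- which now have an envelope of order $uB_n$ --- and control the discarded mass by $\Pr(\exists\,i:\max_j|Z_{ij}|>uB_n)\le n\,\Ep[(\max_j|Z_{ij}|/B_n)^q]\,u^{-q}\le n u^{-q}$, together with the small perturbation truncation induces on the covariances. Optimizing over $u$ produces the extra term $(B_n^2\log^3(pn)/n^{1-2/q})^{1/3}$. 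The two steps I expect to be the real obstacles are the Nazarov-type anti-concentration bound (without it the smoothing step would lose a full power of $p$) and the dimension-free bookkeeping of the third derivatives of the smoothed functional; both are supplied in \cite{CCK(2013),CCK(2016)}, and I would import them directly.
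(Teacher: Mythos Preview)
Your proposal is correct: the paper does not prove this lemma at all but simply cites it as Proposition~2.1 of \cite{CCK(2016)}, and you recognize this at the outset. The reconstruction you sketch --- soft-max smoothing of the indicator, Lindeberg interpolation with matched first and second moments, Nazarov anti-concentration to undo the smoothing, and truncation plus optimization over the cutoff under the polynomial moment condition H3(a) --- is indeed the architecture of the proof in \cite{CCK(2013),CCK(2016)}, so there is nothing to compare.
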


Next we state 
the anti-concentration inequality for the Gaussian distribution, 
Anderson's lemma,
and the concentration inequality for the Gaussian maximum. 
\begin{lemma}[Anti-concentration inequality for the Gaussian distribution; \cite{Nazarov(2003)}]\label{lemma: anti-concentration}
Let $Z=(Z_{1},\ldots,Z_{p})^{\top}$ be a centered  Gaussian random vector in $\R^{p}$ with
$\sigma_{j}^{2}:=\Ep [Z_{j}]^{2}>0$ for all $1 \leq j \leq p$.
Let $\underline{\sigma}:=\min\{\sigma_{j}\}$.
There exists a universal positive constant $\tilde{c}_{2}$
such that 
for every $z = (z_{1},\ldots, z_{p})^{\top} \in \R^{p}$ and $R>0$,
\begin{align*}
	\gamma:=\gamma(R):=&\Pr ( Z_{j} \leq z_{j} + R \  1 \le \forall j \le p ) 
	- \Pr (Z_{j} \leq z_{j}  \  1 \le \forall j \le p  )
\le \tilde{c}_{2} \frac{R}{\underline{\sigma}} \sqrt{\log p}.
\end{align*}
\end{lemma}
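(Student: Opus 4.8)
The plan is to reduce the assertion to a uniform bound on the Gaussian surface area of an axis-parallel box, and to establish the latter along the lines of Nazarov's argument. (I take $p\geq 2$ throughout; for $p=1$ this is the elementary one-dimensional estimate.) First I would normalize variances by setting $W_j:=Z_j/\sigma_j$, so that $W$ is centered Gaussian with $\Var(W_j)=1$ for all $j$. With $a_j:=z_j/\sigma_j$ and $\delta:=R/\underline{\sigma}$ one has $\{Z_j\leq z_j+R\}=\{W_j\leq a_j+R/\sigma_j\}\subseteq\{W_j\leq a_j+\delta\}$ and $\{Z_j\leq z_j\}=\{W_j\leq a_j\}$, so that
\[
\gamma\;\leq\;\Pr(W_j\leq a_j+\delta,\ 1\leq\forall j\leq p)-\Pr(W_j\leq a_j,\ 1\leq\forall j\leq p).
\]
If $\delta\geq 1$ the right-hand side is at most $1$ and the claimed bound holds after enlarging $\tilde{c}_2$, so I may assume $\delta\leq 1$; by a routine perturbation ($W\rightsquigarrow W+\epsilon N$ with $N$ an independent standard Gaussian, $\epsilon\downarrow 0$) I may also assume that $\Cov(W)$ is nonsingular, so $W$ has a smooth density on $\R^p$.

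Next I would differentiate in the common shift. Set $G(t):=\Pr(W_j\leq a_j+t,\ 1\leq\forall j\leq p)$. Differentiating under the integral sign and decomposing the boundary of the box into its $p$ coordinate faces gives
\[
G'(t)=\sum_{j=1}^{p}\varphi(a_j+t)\,\Pr\left(W_k\leq a_k+t,\ k\neq j \mid W_j=a_j+t\right)=:\Lambda(a+t\mathbf{1}),
\]
with $\varphi$ the standard normal density (here $\Var(W_j)=1$ supplies the marginal density of $W_j$). Hence
\[
\gamma\;\leq\;G(\delta)-G(0)=\int_0^{\delta}\Lambda(a+t\mathbf{1})\,dt\;\leq\;\delta\sup_{b\in\R^p}\Lambda(b),
\]
and it remains to prove the uniform bound $\sup_{b\in\R^p}\Lambda(b)\leq\tilde{c}_2\sqrt{\log p}$: that the Gaussian surface area of an axis-parallel box, under a unit-variance but otherwise arbitrarily correlated Gaussian law, is of order $\sqrt{\log p}$.

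For this core estimate I would fix $b\in\R^p$, set $\tau:=\sqrt{2\log p}$, and split the coordinates into $S_+:=\{j:b_j\geq\tau\}$ and $S_-:=\{j:b_j<\tau\}$. On $S_+$ one bounds each conditional probability by $1$ and, since $\varphi$ decreases on $[0,\infty)$, $\varphi(b_j)\leq\varphi(\tau)=(2\pi)^{-1/2}p^{-1}$, so the $S_+$-part of $\Lambda(b)$ is at most $(2\pi)^{-1/2}=O(1)$. On $S_-$ neither crude bound is affordable, since $\varphi(b_j)$ need not be small; the decay must instead come from the conditional event $\{W_k\leq b_k,\ k\in S_-\setminus\{j\}\}$, which simultaneously forces many coordinates with small thresholds to lie below them. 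Using the Gaussian conditional law of $(W_k)_{k\neq j}$ given $W_j=b_j$ --- Gaussian, with means $\rho_{jk}b_j$ and conditional variances $1-\rho_{jk}^2\leq 1$ --- together with the calibration $\tau\sim\sqrt{\log p}$, Nazarov's argument shows that the competition between $\varphi(b_j)$ being large and the conditional survival probability being large is balanced only at configurations for which the $S_-$-part of $\Lambda(b)$ is itself $O(\sqrt{\log p})$, uniformly in $b$ and in $\Cov(W)$. Adding the two parts gives $\sup_b\Lambda(b)=O(\sqrt{\log p})$, and the lemma follows.

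The hard part will be exactly this last step, the uniform bound on the Gaussian surface area of a box: every simple estimate --- $\varphi\leq\varphi(0)$, a conditional probability bounded by $1$, or a union bound over the $p$ faces --- loses a factor of order $p$ in the worst case, so one must balance the number of relevant faces, the sizes of their thresholds, and the correlation structure simultaneously, and it is the calibration of $\tau$ and the handling of the conditional Gaussian laws that require care. (In the paper this lemma is used as a black box, citing \cite{Nazarov(2003)}; the outline above indicates how a self-contained proof would proceed.)
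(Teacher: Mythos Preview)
Your reading is correct: the paper does not prove this lemma at all but merely cites \cite{Nazarov(2003)} and uses the bound as a black box, exactly as you note in your final parenthetical. So there is no ``paper's own proof'' to compare against here.

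Your outline is the standard route to Nazarov's inequality: normalize to unit marginal variances, reduce to bounding the Gaussian surface area $\Lambda(b)=\sum_j \varphi(b_j)\Pr(W_k\leq b_k,\ k\neq j\mid W_j=b_j)$ of an axis-parallel half-space intersection, and then split coordinates according to whether $b_j$ exceeds a threshold $\tau\sim\sqrt{\log p}$. The reduction steps and the $S_+$ part are fine. Where you stop short---the $S_-$ part, handled only by an appeal to ``Nazarov's argument''---is indeed the crux, and your description of it is a bit vague: the actual mechanism is not really a balance between $\varphi(b_j)$ and a single conditional survival probability, but rather a clever use of the fact that $\sum_j \varphi(b_j)\Pr(\cdot\mid W_j=b_j)$ can be rewritten and bounded via an auxiliary one-dimensional Gaussian integration. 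A fully detailed execution of exactly this outline, filling in the $S_-$ step, appears in the short note of Chernozhukov, Chetverikov, and Kato, \emph{Detailed proof of Nazarov's inequality} (arXiv:1711.10696), which you may wish to cite if you want a self-contained reference rather than Nazarov's original (which treats a more general setting).
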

\begin{lemma}[Anderson's lemma; Corollary 3 in \cite{Anderson(1955)}]\label{lemma: Andersons lemma}
Let $\Sigma$ and $\tilde{\Sigma}$ 
be symmetric positive semidefinite $p\times p$ matrices, and 
let $\mathcal{C}$ be a symmetric convex  set in $\R^{p}$.
If $\Sigma-\tilde{\Sigma}$ is positive semidefinite,
then $\Pr ( Z \in \mathcal{C} ) \leq \Pr ( \tilde{Z} \in \mathcal{C} )$ for $Z \sim \mathcal{N}(0,\Sigma)$ and $\tilde{Z} \sim \mathcal{N}(0,\tilde{\Sigma})$,
\end{lemma}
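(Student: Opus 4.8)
The plan is to deduce the lemma from the log-concavity theory of Prékopa, after first reducing the comparison of two Gaussian laws to a statement about a single one. Set $\Delta := \Sigma - \tilde\Sigma$, which is symmetric positive semidefinite by hypothesis. First I would introduce an auxiliary vector $G \sim \mathcal{N}(0,\Delta)$ that is independent of $\tilde Z \sim \mathcal{N}(0,\tilde\Sigma)$; then $\tilde Z + G$ is centered Gaussian with covariance $\tilde\Sigma + \Delta = \Sigma$, so $\tilde Z + G \stackrel{d}{=} Z$. Conditioning on $G$,
\[
\Pr(Z \in \mathcal{C}) = \Ep\big[\Pr(\tilde Z \in \mathcal{C} - G \mid G)\big] = \Ep[h(G)], \qquad h(g) := \Pr(\tilde Z \in \mathcal{C} - g),
\]
so it suffices to prove that $h(g) \le h(0) = \Pr(\tilde Z \in \mathcal{C})$ for every $g \in \R^{p}$; the lemma then follows by taking expectations over $G$.

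To bound $h$, I would establish two properties. First, $h$ is even: since $\tilde Z \stackrel{d}{=} -\tilde Z$ and $-\mathcal{C} = \mathcal{C}$, one has $h(-g) = \Pr(-\tilde Z \in \mathcal{C} + g) = \Pr(\tilde Z \in -\mathcal{C} - g) = h(g)$. Second, $h$ is log-concave on $\R^{p}$: writing $\tilde\Sigma = AA^{\top}$ with $A$ a $p \times r$ matrix of rank $r = \rank \tilde\Sigma$ and $U \sim \mathcal{N}(0,I_{r})$, so that $AU \stackrel{d}{=} \tilde Z$,
\[
h(g) = \Pr(AU + g \in \mathcal{C}) = \int_{\R^{r}} \mathbf{1}_{\mathcal{C}}(Au + g)\,\varphi_{r}(u)\,du,
\]
where $\varphi_{r}$ is the standard Gaussian density on $\R^{r}$. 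The integrand $(u,g) \mapsto \mathbf{1}_{\mathcal{C}}(Au+g)\varphi_{r}(u)$ is log-concave on $\R^{r} \times \R^{p}$, being the product of the log-concave density $\varphi_{r}$ (a function of $u$ alone) with $\mathbf{1}_{\mathcal{C}}$ composed with the affine map $(u,g) \mapsto Au + g$ (the indicator of a convex set is log-concave). By Prékopa's theorem on preservation of log-concavity under marginalization, the $u$-marginal $h$ is log-concave on $\R^{p}$. Combining the two properties: if $h(g) > 0$ then also $h(-g) = h(g) > 0$, and the midpoint log-concavity inequality gives $h(0) = h\big(\tfrac12 g + \tfrac12(-g)\big) \ge h(g)^{1/2} h(-g)^{1/2} = h(g)$; the bound $h(g) \le h(0)$ is trivial when $h(g) = 0$. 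Hence $\Pr(Z \in \mathcal{C}) = \Ep[h(G)] \le h(0) = \Pr(\tilde Z \in \mathcal{C})$.

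The main obstacle is the convex-geometric input in the second step, namely Prékopa's theorem (equivalently the Prékopa–Leindler inequality, a functional form of Brunn–Minkowski), which I would invoke rather than reprove. An alternative, following \cite{Anderson(1955)} directly, would avoid Prékopa for nonsingular $\tilde\Sigma$ by the layer-cake identity $h(tg) = \int_{0}^{\infty} |\mathcal{C} \cap (E_{c} + tg)|\,dc$ over the ellipsoidal (hence symmetric convex) superlevel sets $E_{c}$ of the Gaussian density, together with the inclusion $\tfrac12\big(\mathcal{C}\cap(E+t_{0}g)\big) + \tfrac12\big(\mathcal{C}\cap(E+t_{1}g)\big) \subseteq \mathcal{C}\cap\big(E + \tfrac{t_{0}+t_{1}}{2}g\big)$ and Brunn–Minkowski, which show that $t \mapsto |\mathcal{C}\cap(E_{c}+tg)|$ has concave $1/p$-th power and is even, hence nonincreasing on $[0,\infty)$; this route, however, needs a separate reduction to the range of $\tilde\Sigma$ in the degenerate case, which the Prékopa argument handles automatically.
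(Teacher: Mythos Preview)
Your argument is correct. The paper does not supply its own proof of this lemma; it simply cites Corollary~3 of Anderson (1955) as a supporting tool. So there is nothing to compare your route against within the paper itself.

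Your reduction $Z \stackrel{d}{=} \tilde Z + G$ with $G \sim \mathcal{N}(0,\Delta)$ independent, followed by the observation that $h(g) = \Pr(\tilde Z \in \mathcal{C} - g)$ is even and log-concave (via Pr\'ekopa) and hence maximized at the origin, is a clean and standard modern proof. The handling of possibly singular $\tilde\Sigma$ through the factorization $\tilde\Sigma = AA^{\top}$ and integration in $\R^{r}$ is also correct and, as you note, sidesteps the extra reduction that Anderson's original layer-cake/Brunn--Minkowski argument would require in the degenerate case. Either route is acceptable; yours is arguably more economical once one is willing to invoke Pr\'ekopa--Leindler.
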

\begin{lemma}[Concentration inequality for the Gaussian maximum; Theorem 2.5.8. in \cite{GineandNickl_book}]\label{lemma: concentration inequality for Gaussian maxima}
	Let $N_{1},\dots,N_{p} \sim \mathcal{N}(0,1)$ i.i.d. and 
	let $\{w_{i} \}_{i = 1}^{p}$ be a positive sequence with $\downw=\min_{1 \le i \le p}w_{i}$.
	Then for every $R > 0$, 
	\begin{align*}
	\Pr \Big{(} \Big{|} \max_{1 \le i \le p} |N_{i}/w_{i}| - \Ep \Big{[}\max_{1 \le i \le p} |N_{i}/w_{i}| \Big{]} \Big{|}
	\geq R  \Big{)} \leq  2 \exp( - \downw^{2} R^{2} / 2 ).
	\end{align*}
\end{lemma}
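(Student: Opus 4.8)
The plan is to deduce the statement from the concentration inequality for Lipschitz functions of a standard Gaussian vector, which is exactly the content of Theorem 2.5.8 in \cite{GineandNickl_book}: if $F:\R^{p}\to\R$ is Lipschitz with respect to the Euclidean norm with Lipschitz constant $L$, then $\Pr(|F(N)-\Ep[F(N)]|\geq t)\leq 2\exp(-t^{2}/(2L^{2}))$ for every $t>0$, where $N=(N_{1},\dots,N_{p})^{\top}$ has i.i.d.\ $\mathcal{N}(0,1)$ coordinates. Consequently the whole argument reduces to identifying the Lipschitz constant of the map $F(x):=\max_{1\leq i\leq p}|x_{i}/w_{i}|$ and then invoking this inequality.

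First I would bound the Lipschitz constant of $F$. For $x,y\in\R^{p}$, combining the elementary inequality $|\max_{i}a_{i}-\max_{i}b_{i}|\leq\max_{i}|a_{i}-b_{i}|$ with $\big||x_{i}/w_{i}|-|y_{i}/w_{i}|\big|\leq|x_{i}-y_{i}|/w_{i}\leq|x_{i}-y_{i}|/\downw$ yields
\[
|F(x)-F(y)|\leq\frac{1}{\downw}\max_{1\leq i\leq p}|x_{i}-y_{i}|\leq\frac{1}{\downw}\|x-y\|,
\]
where the last step uses that the $\ell^{\infty}$-norm is dominated by the Euclidean norm. Hence $F$ is $(1/\downw)$-Lipschitz.

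Then I would apply the Gaussian concentration inequality above with $L=1/\downw$ and $t=R$, which gives
\[
\Pr\Big(\big|\max_{1\leq i\leq p}|N_{i}/w_{i}|-\Ep[\max_{1\leq i\leq p}|N_{i}/w_{i}|]\big|\geq R\Big)\leq 2\exp\big(-\downw^{2}R^{2}/2\big),
\]
which is precisely the asserted bound. There is no substantive obstacle here; the only point requiring (minor) care is that taking coordinatewise absolute values and then a maximum are each $1$-Lipschitz operations, so that the Lipschitz constant of $F$ is controlled entirely by the reweighting $x_{i}\mapsto x_{i}/w_{i}$, producing the factor $1/\downw$ and nothing more.
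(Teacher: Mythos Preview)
Your proof is correct and is essentially what the paper does: the lemma is stated with a direct citation to Theorem 2.5.8 in \cite{GineandNickl_book} (the Gaussian Lipschitz concentration inequality) and no further argument, so the implicit proof is exactly the one you wrote out---identify the Lipschitz constant of $x\mapsto\max_{i}|x_{i}/w_{i}|$ as $1/\downw$ and apply the cited theorem.
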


\subsection{Proof of Theorem \ref{theorem: frequentist validation of Bayesian credible rectangles}}

We only prove the theorem under Condition \ref{Condition: moment condition} (a).
The proof under Condition \ref{Condition: moment condition} (b) is done by replacing
Lemma \ref{lemma: high dimensional CLT} (a)
by Lemma \ref{lemma: high dimensional CLT} (b).

The proof is divided into two parts.
We first derive an upper bound on the coverage error $|\Pr(\beta_{0}\in I(\hat{\beta}(Y),\hat{R}_{\alpha}))-(1-\alpha)|$ and then bound 
the radius $\hat{R}_{\alpha}$ of $I(\hat{\beta}(Y),\hat{R}_{\alpha})$.

\subsubsection*{Step 1: Upper bound on the coverage error}
We start with proving that  $\hat{R}_{\alpha}$ concentrates on the $(1-\alpha)$-quantile of some distribution 
with high probability. 
Let $\overline{\zeta}$ be the upper bound 
in Proposition \ref{proposition: Berry Esseen type bound on posterior distribution}.
From Proposition \ref{proposition: Berry Esseen type bound on posterior distribution},
we have
\begin{align*}
|\underbrace{\Pi_{\beta}(I(\hat{\beta}(Y),\hat{R}_{\alpha}) \mid Y)}_{=1-\alpha}-\mathcal{N}(I(\hat{\beta}(Y),\hat{R}_{\alpha}) \mid \hat{\beta}(Y),\sigma_{0}^{2}(X^{\top}X)^{-1})|
\leq \overline{\zeta}\
\text{ for $Y \in H$},
\end{align*}
where recall that $H = \{ Y : \| X (\hat{\beta}(Y) - \beta_{0} ) \| \leq c_{1}\sqrt{p\log n} \sigma_{0}/4 \} \cap \{Y : \Pi_{\sigma^{2}}(|\sigma^{2}/\sigma_{0}^{2} -1 | \geq \delta_{1} \mid Y) \leq \delta_{2} \}$. 
Let $\tilde{S} \sim \mathcal{N}(0,(X^{\top}X)^{-1})$ and 
let $G$ be the  distribution function of
$\sigma_{0} \max \{ |e_{(p),i}^{\top} \tilde{S} | / w_{i} \}$,
where 
$e_{(p),i}$ is the $p$-dimensional unit vector whose $i$-th component is 1.
Now since $\mathcal{N}(I(\hat{\beta}(Y),\hat{R}_{\alpha}) \mid \hat{\beta}(Y),\sigma_{0}^{2}(X^{\top}X)^{-1})
        = G(\hat{R}_{\alpha})$, 
we have $|(1-\alpha)-G(\hat{R}_{\alpha})|\leq \overline{\zeta}$ for $Y\in H$.
This implies
\begin{align}
	G^{-1}(1-\alpha-\overline{\zeta})\leq \hat{R}_{\alpha} \leq G^{-1}(1-\alpha+\overline{\zeta}) 
	\ \text{ for $Y\in H$},
	\label{eq: evaluate hat R}
\end{align}
where $G^{-1}$ denotes the quantile function of $G$.

Next, 
we will derive an upper bound on $\Pr(\beta_{0}\in I(\hat{\beta}(Y),\hat{R}_{\alpha}))-(1-\alpha)$ (the lower bound follows similarly). 
Let $\rho$ be the constant in Lemma \ref{lemma: high dimensional CLT} 
when
$Z_{j} = n(X^{\top}X)^{-1}X_{j\cdot}\varepsilon_{j}$ for $j=1,\ldots,n$,
where $X_{j\cdot} = (X_{j1},\ldots,X_{jp})^{\top}$ for $j=1,\ldots,n$.
For $R>0$,
let $\gamma(R)$ be the constant in Lemma \ref{lemma: anti-concentration} 
when 
$Z = \sigma_{0}\tilde{S}$.
Finally, let $\tilde{r}:=(X^{\top}X)^{-1}X^{\top}r$.
From inequality (\ref{eq: evaluate hat R}) and
by the definitions of $\rho$, $G$, and $\gamma$,
we have
\begin{align*}
&\Pr(\beta_{0}\in I(\hat{\beta}(Y),\hat{R}_{\alpha}))-(1-\alpha)
\\
&\leq
\Pr\Big{(} \max_{1 \le i \le p}\{|e_{(p),i}^{\top}(X^{\top}X)^{-1}X^{\top}(\varepsilon + r) |/w_{i}\}
\leq G^{-1}(1-\alpha+\overline{\zeta})\Big{)}
-(1-\alpha) + \Pr(Y \not\in H)
\\
&\leq
\Pr\Big{(} \max_{1 \le i \le p}\{|e_{(p),i}^{\top}(\sigma_{0}\tilde{S}+\tilde{r}) | /w_{i}\}
\leq G^{-1}(1-\alpha+\overline{\zeta})\Big{)}
-(1-\alpha) + \rho +  \Pr(Y \not\in H)
\\
&\leq
\gamma( \| \tilde{r} \|_{\infty} ) + \overline{\zeta} +  \rho + \Pr( Y \not\in H).
\end{align*}
Proposition \ref{proposition: test set} gives an upper bound on $\Pr(Y \not\in H)$.
From Lemmas \ref{lemma: high dimensional CLT} and \ref{lemma: anti-concentration},
we obtain the following bounds on $\rho$ and $\gamma$:
For some $\tilde{c}_{1}>0$ depending only on $q$,
\begin{align*}
	\rho \leq \tilde{c}_{1} \left\{
	\left( \frac{ p \log^{7}(pn)}{ n } \frac{ \uplambda } {\downlambda } \right)^{1/6}
+  \left( \frac{p\log^{3}(pn)}{ n^{1-2/q}} \frac{\uplambda }{\downlambda } \right)^{1/3}\right\}
\text{ and }
\gamma \leq \tilde{c}_{1} \frac{ \|\tilde{r}\|_{\infty}  }{ \sigma_{0} \downlambda^{1/2} }\sqrt{\log p},
\end{align*}
which completes Step 1. 

\subsubsection*{Step 2: Upper bound on the max-diameter}
We start with deriving a high-probability upper bound on $\hat{R}_{\alpha}$
using the quantile function $F^{-1}$ of $\max_{1 \le i \le p}|N_{i}/w_{i}|$
for  independent standard Gaussian random variables $\{N_{i}:i=1,\ldots,p\}$.
From Lemma \ref{lemma: Andersons lemma}, we have 
\begin{align*}
\Pr \Big{(} \max_{1 \le i \le p}|N_{i}/w_{i}|\leq R \Big{/} \Big{(}\sigma_{0}\overline{\lambda}^{1/2}\Big{)} \Big{)}
\leq \Pr \Big{(} \max_{1 \le i \le p} | \sigma_{0}\tilde{S}_{i} / w_{i} | \leq R \Big{)} \text{ for $R>0$.}
\end{align*}
Together with inequality  (\ref{eq: evaluate hat R}), we have
\begin{align}
\hat{R}_{\alpha}
\leq 
\sigma_{0}\overline{\lambda}^{1/2}F^{-1}(1-\alpha+\overline{\zeta})\
\text{ for $Y\in H$}.
\label{eq: evaluate hat R 2}
\end{align}

Next, 
we will bound
$F^{-1} (1-\alpha + \overline{\zeta} ) / \Ep[\max_{1 \le i \le p}|N_{i}/w_{i}|]$.
From Lemma \ref{lemma: concentration inequality for Gaussian maxima},
there exists $\tilde{c}_{2}>1$ depending only on $\alpha$ and $\downw$ such that 
\begin{align*}
\Pr&\Big{(}\max_{1 \le i \le p}|N_{i}/w_{i}| -\Ep\Big{[}\max_{1 \le i \le p}|N_{i}/w_{i}|\Big{]}
          \geq \tilde{c}_{2} \Ep\Big{[}\max_{1 \le i \le p}|N_{i}/w_{i}| \Big{]}\Big{)}
	   <\alpha - \alpha/2 < \alpha -\overline{\zeta}.
\end{align*}
Therefore, by the definition of $F^{-1}$, we have
\begin{align*}
F^{-1} (1-\alpha + \overline{\zeta}) 
&=\inf \Big{\{}R:\Pr \Big{(}\max_{1 \le i \le p}|N_{i}/w_{i} | \geq R \Big{)} \leq \alpha - \overline{\zeta} \Big{\}} \\
&\leq (1+\tilde{c}_{2}) \Ep\Big{[}\max_{1 \le i \le p}|N_{i}/w_{i}|\Big{]}.
\end{align*}
Together with (\ref{eq: evaluate hat R 2}), 
we obtain the desired upper bound on $\hat{R}_{\alpha}$.

\subsubsection*{Step 3: Lower bound on the max-diameter}
As in Step 2, 
we have
\begin{align}
\sigma_{0}\overline{\lambda}^{1/2}\upw^{-1} \tilde{F}^{-1}(1-\alpha-\overline{\zeta})
\leq \hat{R}_{\alpha}
\text{ for $Y\in H$}
\label{eq: evaluate hat R 3}
\end{align}
Next, we will show that 
$\tilde{F}^{-1} ( 1-\alpha - \overline{\zeta} ) \geq \tilde{c}_{3} \sqrt{\log p}$ for some constant $\tilde{c}_{3}$ depending only on $\alpha$.
From the Paley--Zygmund inequality,
we have
for $\theta\in [0,1]$,
\begin{align}
\Pr \Big{(}\max_{1 \le i \le p} |N_{i}| \geq \theta \Ep \Big{[}\max_{1 \le i \le p} |N_{i}|\Big{]} \Big{)} 
	\geq
	(1-\theta)^{2} 
	\frac{(\Ep[\max_{1 \le i \le p}|N_{i}|])^{2}}
	{\Ep[ (\max_{1 \le i \le p}|N_{i}|)^{2} ]}.
	\label{eq: Paley-Zygmund}
\end{align}
From Lemma \ref{lemma: concentration inequality for Gaussian maxima}
together with the inequality $\Ep[\max_{1 \le i \le p}|N_{i}|]\geq \sqrt{\log p}/12$,
there exists a universal positive constant $\tilde{c}_{4}$ such that
\begin{align}
\Ep\Big{[}\Big{(}\max_{1 \le i \le p}|N_{i}|\Big{)}^{2}\Big{]}
\leq
\Big{(}\Ep\Big{[}\max_{1 \le i \le p}|N_{i}|\Big{]}\Big{)}^{2}(1+\tilde{c}_{4}/\sqrt{\log p}),
\label{eq: expectation of squared maxima}
\end{align}
where we have used use Lemma \ref{lemma: concentration inequality for Gaussian maxima}  to deduce that 
\begin{align*}
	\Ep \Big{[} \Big{(}\max_{1 \le i \le p}|N_{i}| \Big{)}^{2}\Big{]}
		&\leq
		\Big{(} \Ep \Big{[}\max_{1 \le i \le p}|N_{i}|\Big{]} \Big{)}^{2}
		+ 4\int_{\Ep[\max_{1 \le i \le p}|N_{i}|]}^{\infty} t \mathrm{e}^{-\Big{(}t-\Ep\Big{[} \max_{1 \le i \le p}|N_{i}|\Big{]}\Big{)}^2 /2 } dt
		\nonumber\\
		&\leq
		\Big{(} \Ep \Big{[}\max_{1 \le i \le p}|N_{i}|\Big{]}  \Big{)}^{2}
		+ \tilde{c}_{5} \Big{(}\Ep\Big{[} \max_{1 \le i \le p}|N_{i}|\Big{]}+1\Big{)}
\end{align*}
for some universal positive constant $\tilde{c}_{5}$. 
Let $\eta:=(1+\alpha)/2$.
Take $p$ such that $1 / \{1+\tilde{c}_{4}/\sqrt{\log p}\} \geq (\eta+1)/2$,
and take $\theta^{*}_{\alpha} = 1- \sqrt{ (2\eta) / (\eta+1) }$.
Then,
from inequalities  (\ref{eq: Paley-Zygmund}) and (\ref{eq: expectation of squared maxima}),
we have
\[\Pr \Big{(}\max_{1 \le i \le p} | N_{i} | \geq
\theta^{*}_{\alpha} \Ep \Big{[}\max_{1 \le i \le p} | N_{i} | \Big{]} \Big{)} 
    \geq (1-\theta^{*}_{\alpha})^{2}\frac{ (\Ep [\max_{1 \le i \le p}|N_{i}|] )^{2}}{ \Ep [(\max_{1 \le i \le p}|N_{i}|)^{2}]} 
    \geq \eta \geq  \alpha + \overline{\zeta}.\]
Thus we have
\begin{align}
\tilde{F}^{-1}(1 - \alpha - \overline{\zeta} ) \geq \theta^{*}_{\alpha} \Ep\Big{[}\max_{1 \le i \le p}|N_{i}|\Big{]} \geq (\theta^{*}_{\alpha}/12) \sqrt{\log p}.
\label{eq: lower estimate of tilde F}
\end{align}
Together with (\ref{eq: evaluate hat R 3}),
we obtain the desired lower bound on $\hat{R}_{\alpha}$.
\qed

\section{Conclusion}

We have studied finite sample bounds on frequentist coverage errors of Bayesian credible rectangles to approximately linear regression models with moderately high dimensional regressors. 
As an application, we have shown that 
Bayesian credible bands have coverage errors (for the true function) decaying polynomially fast in the sample size 
in Gaussian white noise models and linear inverse problems;
the similar results hold for the surrogate function in nonparametric regression models.
This supports the use of Bayesian approaches to constructing nonparametric confidence bands.


\section{Acknowledgement}
We are thankful to the Editor, the Associate Editor, and anonymous referees for their helpful comments.
K.~Yano is supported by the Grant-in-Aid for Young Scientists from the JSPS (19K20222) and
by JST CREST (JPMJCR1763).

\begin{supplement}[id=suppA]
\stitle{Supplement to ``On frequentist coverage errors of Bayesian credible sets in high dimensions"}
\sdatatype{.pdf}
\sdescription{The supplementary material contains the proofs omitted in the main text.}
\end{supplement}

\bibliographystyle{imsart-number}
\bibliography{BvM}

\newpage

\section*{Supplement to ``On frequentist coverage errors  of Bayesian credible sets in moderately high dimensions"}

This supplemental material is organized as follows:
Appendix A contains proofs of Propositions 2.5-2.6 in \cite{YanoKato}.
Appendix B contains proofs of Propositions 2.1-2.4 in \cite{YanoKato}.
Appendix C contains proofs for Section 3 in \cite{YanoKato}.
Appendices C.1-C.2 provide proofs of Propositions 3.1-3.2 in \cite{YanoKato}.
Appendix C.3 provides a proof of of Proposition 3.3 in \cite{YanoKato}.
Appendix C.4 provides a proof of Proposition 3.4 and a bound on $\tau$ in Remark 3.6 of \cite{YanoKato}.
Hereafter, 
the numbering for theorems, conditions, and propositions follows that of \cite{YanoKato}.

\appendix

\section{Proofs for Subsection 2.2}

In this section, we provide proofs of Propositions 2.5-2.6.

\subsection{Technical Lemmas}

We present here some technical lemmas that will be used to prove Proposition 2.5.

\begin{lemma}[Scheff\'{e}'s lemma]
\label{lemma: Scheffe}
Let $Q_{1}$ and $Q_{2}$ be probability measures on a measurable space with a common dominating measure $\mu$. 
Let $q_{1} = dQ_{1}/d\mu$ and $q_{2}=dQ_{2}/d\mu$. Then
\[
	\|Q_{1}-Q_{2}\|_{\mathrm{TV}}=\frac{1}{2}\int|q_{1}(x)-q_{2}(x)| d \mu(x)=\int(q_{1}(x)-q_{2}(x))_{+} d \mu(x),
\]
\end{lemma}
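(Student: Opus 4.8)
The plan is to work directly from the definition of the total variation distance, $\|Q_1 - Q_2\|_{\mathrm{TV}} = \sup_{A} |Q_1(A) - Q_2(A)|$, where the supremum runs over measurable sets $A$. First I would note that, since $\mu$ dominates both measures, for an arbitrary measurable $A$ we have $Q_1(A) - Q_2(A) = \int_A (q_1 - q_2)\, d\mu$. The integrand is nonnegative precisely on the set $A^{\star} := \{x : q_1(x) \ge q_2(x)\}$, so replacing $A$ by $A^{\star}$ can only increase this quantity; hence $\sup_A \{Q_1(A) - Q_2(A)\} = \int_{A^{\star}}(q_1 - q_2)\, d\mu = \int (q_1 - q_2)_+\, d\mu$, and symmetrically $\sup_A \{Q_2(A) - Q_1(A)\} = \int (q_2 - q_1)_+\, d\mu$.

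Next I would observe that these two suprema coincide. Since $Q_1$ and $Q_2$ are probability measures, $\int (q_1 - q_2)\, d\mu = 1 - 1 = 0$, and therefore $\int (q_1 - q_2)_+\, d\mu = \int (q_2 - q_1)_+\, d\mu$. Consequently $\sup_A |Q_1(A) - Q_2(A)| = \max\{\sup_A (Q_1(A)-Q_2(A)),\, \sup_A(Q_2(A)-Q_1(A))\} = \int (q_1-q_2)_+\, d\mu$, which is the third expression in the statement.

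Finally, to recover the middle expression, I would use the pointwise identity $|q_1 - q_2| = (q_1 - q_2)_+ + (q_2 - q_1)_+$ and integrate, invoking the equality of the two integrals just established to get $\int |q_1 - q_2|\, d\mu = 2\int (q_1 - q_2)_+\, d\mu$; dividing by $2$ then matches $\|Q_1 - Q_2\|_{\mathrm{TV}}$. There is no genuinely hard step here: the only point that needs a moment's care is justifying that the supremum defining the total variation is attained at $A^{\star}$, which is exactly the sign analysis of the integrand of $\int_A (q_1 - q_2)\, d\mu$; everything else is bookkeeping with the normalization $\int q_1\, d\mu = \int q_2\, d\mu = 1$.
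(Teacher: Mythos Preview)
Your proof is correct and is precisely the standard argument for Scheff\'e's lemma. The paper itself does not prove the lemma but simply refers the reader to \cite{Tsybakov(2009)}, so your proposal in fact supplies more detail than the paper does, and the argument you give is exactly the one found in such references.
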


\begin{proof}
See, e.g., p.84 in \cite{Tsybakov(2009)}.
\end{proof}

\begin{lemma}[Posterior contraction of a marginal prior distribution]\label{lemma: posterior tail mass}
Recall that $B(R)=\{ \beta \in \R^{p} : \| X (\beta - \beta_{0}) \| \leq \sigma_{0} R \}$ for $R > 0$.
Under Conditions 2.1 and 2.3,
there exist positive constants $\tilde{c}_{1}$ and $\tilde{c}_{2}$ depending 
only on $C_{1}$ in Condition 2.1
such that
for a sufficiently large $R>0$,
the inequality
\[
\Pi_{\beta}(\beta\not\in B(R)\mid Y,\sigma^{2})
\leq
4\exp\{ \tilde{c}_{1}  p\log n - \tilde{c}_{2} (\sigma_{0}^{2}/\sigma^{2})R^{2} \}
\]
holds for $Y \in H$,
where recall that 
\[H:=\{Y: \|X(\hat{\beta}(Y)-\beta_{0})\|\leq R\sigma_{0}/4 \}\cap \{Y : \Pi_{\sigma^{2}}(|\sigma^{2}/\sigma_{0}^{2}-1| \geq \delta_{1} \mid Y) \leq \delta_{2} \}.\]
\end{lemma}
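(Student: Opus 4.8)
\emph{Proof strategy.} The plan is to write the conditional posterior tail probability as a ratio of two Gaussian-type integrals against the prior density, crudely upper bound the numerator, and lower bound the denominator by localising around $\beta_{0}$ on the scale where the flatness condition bites. The starting point is the Pythagorean decomposition $\|Y-X\beta\|^{2}=\|Y-X\hat{\beta}(Y)\|^{2}+\|X(\beta-\hat{\beta}(Y))\|^{2}$, which holds because the OLS residual $Y-X\hat{\beta}(Y)$ is orthogonal to the column space of $X$ while $X(\beta-\hat{\beta}(Y))$ lies in it. Since $\Pi_{\beta}(d\beta\mid Y,\sigma^{2})\propto e^{-\|Y-X\beta\|^{2}/(2\sigma^{2})}\pi(\beta)\,d\beta$, the common factor $e^{-\|Y-X\hat{\beta}(Y)\|^{2}/(2\sigma^{2})}$ cancels in numerator and denominator, leaving
\[
\Pi_{\beta}(\beta\notin B(R)\mid Y,\sigma^{2})
=\frac{\int_{B(R)^{c}}e^{-\|X(\beta-\hat{\beta}(Y))\|^{2}/(2\sigma^{2})}\pi(\beta)\,d\beta}
{\int_{\R^{p}}e^{-\|X(\beta-\hat{\beta}(Y))\|^{2}/(2\sigma^{2})}\pi(\beta)\,d\beta}.
\]
On $Y\in H$ one has $\|X(\hat{\beta}(Y)-\beta_{0})\|\leq R\sigma_{0}/4$, so by the triangle inequality $B(R)^{c}\subseteq\{\beta:\|X(\beta-\hat{\beta}(Y))\|>\tfrac34R\sigma_{0}\}$, and, provided $R\geq 4/\sqrt n$, $B(1/\sqrt n)\subseteq\{\beta:\|X(\beta-\hat{\beta}(Y))\|\leq\tfrac12R\sigma_{0}\}$, with $\beta_{0}\in B(1/\sqrt n)$.

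For the numerator I would bound $e^{-\|X(\beta-\hat{\beta}(Y))\|^{2}/(2\sigma^{2})}\leq e^{-(9/32)(\sigma_{0}^{2}/\sigma^{2})R^{2}}$ on $B(R)^{c}$ and use $\int\pi=1$, obtaining numerator $\leq e^{-(9/32)(\sigma_{0}^{2}/\sigma^{2})R^{2}}$. For the denominator I would restrict the integral to $B(1/\sqrt n)$, replace $\pi(\beta)$ there by $\pi(\beta_{0})/2$ using Condition~2.3 (which gives $\pi(\beta)\geq\pi(\beta_{0})/2$ for $\beta\in B(1/\sqrt n)$), bound the exponential below by $e^{-(1/8)(\sigma_{0}^{2}/\sigma^{2})R^{2}}$ by the second inclusion above, and compute the Lebesgue measure of the ellipsoid $B(1/\sqrt n)=\{\beta:\|X(\beta-\beta_{0})\|\leq\sigma_{0}/\sqrt n\}$ as $V_{p}(\sigma_{0}/\sqrt n)^{p}/\sqrt{\det(X^{\top}X)}$, where $V_{p}$ is the volume of the Euclidean unit ball in $\R^{p}$. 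Inserting Condition~2.1, $\pi(\beta_{0})\geq\sigma_{0}^{-p}\sqrt{\det(X^{\top}X)}\,n^{-C_{1}p}$, the powers of $\sigma_{0}$ and of $\sqrt{\det(X^{\top}X)}$ coming from $\pi(\beta_{0})$ and from the volume cancel, so the denominator is at least $\tfrac12V_{p}\,n^{-(C_{1}+1/2)p}\,e^{-(1/8)(\sigma_{0}^{2}/\sigma^{2})R^{2}}$; since $p\leq n$ gives $\log(1/V_{p})\lesssim p\log p\leq p\log n$, this is $\geq e^{-\tilde{c}_{1}p\log n-(1/8)(\sigma_{0}^{2}/\sigma^{2})R^{2}}$ with $\tilde{c}_{1}$ depending only on $C_{1}$.

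Dividing, the two occurrences of $(\sigma_{0}^{2}/\sigma^{2})R^{2}$ combine with net coefficient $9/32-1/8=5/32>0$, which yields the claim with $\tilde{c}_{2}=5/32$ (the numerical factor $4$ in the statement is harmless slack, and ``$R$ sufficiently large'' is used only to ensure $R\geq 4/\sqrt n$). The one place requiring genuine care is the $\sigma^{2}$-bookkeeping in the denominator: because the localisation ball $B(1/\sqrt n)$ is centred at $\beta_{0}$ rather than at $\hat{\beta}(Y)$, its points lie at distance up to $\tfrac12R\sigma_{0}$ from $\hat{\beta}(Y)$, and one must verify that the ensuing loss $e^{-(1/8)(\sigma_{0}^{2}/\sigma^{2})R^{2}}$ carries a strictly smaller coefficient than the numerator's gain $e^{-(9/32)(\sigma_{0}^{2}/\sigma^{2})R^{2}}$ --- this is precisely why the radius defining $H$ is taken to be $R\sigma_{0}/4$. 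The volume estimate for $V_{p}$ and the cancellation of $\det(X^{\top}X)$ between the two conditions are routine, and nothing more delicate is needed.
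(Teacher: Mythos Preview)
Your proof is correct and follows the same overall architecture as the paper's: write the conditional posterior tail as a ratio of integrals, bound the numerator above by the exponential decay outside $B(R)$, and bound the denominator below by restricting to a small ball around $\beta_{0}$ where Conditions~2.1 and~2.3 bite. The one genuine technical difference is in how the centering is handled. The paper expands around $\beta_{0}$, which produces a cross term $\langle P(\varepsilon+r),\,X(\beta-\beta_{0})\rangle/\sigma^{2}$ (with $P=X(X^{\top}X)^{-1}X^{\top}$) that is then controlled via the elementary inequality $2xy\leq cx^{2}+c^{-1}y^{2}$ with a free parameter $c$, eventually set to $c=3$. You instead use the Pythagorean decomposition to center at $\hat{\beta}(Y)$, which kills the cross term exactly; the event $Y\in H$ then lets you pass between $\|X(\beta-\hat{\beta}(Y))\|$ and $\|X(\beta-\beta_{0})\|$ by the triangle inequality. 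Your route is a shade more elementary and avoids the auxiliary parameter; the paper's route would adapt more readily to quasi-likelihoods that are not exactly Gaussian, where the Pythagorean identity is unavailable. The volume computation for the small ball and its cancellation against Condition~2.1 is essentially identical in both arguments --- the paper packages it as a separate small-ball lemma, you do it inline.
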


\begin{proof}
We use the following lower bounds on the small ball probability of a prior distribution:
\begin{lemma}[Lower bounds on the small ball probability of a prior distribution]
\label{lemma: prior small ball probability}
Let $\Pi_{\beta}$ be a probability measure with a density $\pi$ 
with respect to the $p$-dimensional Lebesgue measure.
Recall that $\phi_{\Pi_{\beta}}(R)=1-\inf_{\beta,\tilde{\beta}\in B(R)}\{\pi(\beta)/\pi(\tilde{\beta})\}$ for $R>0$.
Then, we have, for every $R > 0$,
\[
\Pi_{\beta}(\beta \in B(R))\geq 
	\frac{\{1-\phi_{\Pi_{\beta}}(R)\}(\pi \mathrm{e} R )^{p/2}}{2 (p/2+1)^{p/2+1/2}}\frac{\pi(\beta_{0})\sigma_{0}^{p}}{\sqrt{\mathrm{det}(X^{\top}X)}}.
\]
\end{lemma}
\begin{proof}[Proof of Lemma \ref{lemma: prior small ball probability}]
Observe that 
\[
	\Pi_{\beta} ( \beta \in B(R)) =\int_{B(R)} \pi(\beta) d \beta
\geq\mathop{\inf}_{\beta\in B(R)}\left\{\frac{\pi(\beta)}{\pi(\beta_{0})}\right\}\pi(\beta_{0})\int_{B(R)}\mathrm{d}\beta.
\]
Changing variables, we have that 
\[
\int_{B(R)}d\beta
= 
	\frac{(\sigma_{0}^{2}R^{2})^{p/2}}{\sqrt{\det(X^{\top}X)}}\int_{\|\beta\|\leq 1}d\beta
	=\frac{(\sigma_{0}^{2}R^{2})^{p/2}\pi^{p/2}}{\sqrt{\det(X^{\top}X)}\Gamma(p/2+1)},
\]
where $\Gamma(\cdot)$ is the Gamma function.
Using the bound 
\[
\Gamma(p/2+1)\leq \sqrt{2\pi} (p/2+1)^{p/2+1/2} \exp(-p/2-17/18)
\]
(e.g., seesection 5.6.1. in \cite{NIST}), we have that
\[
	\int_{B(R)}\mathrm{d}\beta\geq \frac{(\sigma_{0}^{2} \pi\mathrm{e} R^{2})^{p/2}\mathrm{e}^{17/18}}{\sqrt{2\pi}\sqrt{\det(X^{\top}X)}(p/2+1)^{p/2+1/2}}.
\]
Since $\mathrm{e}^{17/18}/\sqrt{2\pi}\geq 1/2$, we obtain the desired inequality. 
\end{proof}

Return to the proof of Lemma \ref{lemma: posterior tail mass}.
Letting $P:=X(X^{\top}X)^{-1}X^{\top}$, we have
\begin{align}
	\Pi_{\beta}( \beta \in B \mid Y,\sigma^{2})
	&=
	\frac{\int_{B^{\mathrm{c}}}\mathrm{e}^{-\langle P(\varepsilon+r),X(\beta-\beta_{0})\rangle/\sigma^{2}-\|X(\beta-\beta_{0})\|^{2}/(2\sigma^{2})}\pi(\beta)d\beta }
	{  \int \mathrm{e}^{-\langle P(\varepsilon+r),X(\beta-\beta_{0})\rangle/\sigma^{2}-\|X(\beta-\beta_{0})\|^{2}/(2\sigma^{2})}\pi(\beta)d\beta  }.
\label{eq: posterior tail mass mid}
\end{align}
Since $cx^{2}+c^{-1}y^{2}\geq 2xy$ for $x,y,c>0$,
we have, for any $c>1$,
\begin{align}
	\int_{B^{\mathrm{c}}}&\exp\{-\langle P(\varepsilon+r),X(\beta-\beta_{0})\rangle/\sigma^{2}-\|X(\beta-\beta_{0})\|^{2}/(2\sigma^{2})\}\pi(\beta)d\beta 
\nonumber\\
	&\leq \int_{B^{\mathrm{c}}}\exp\{\|P(\varepsilon+r)\|\|X(\beta-\beta_{0})\|/\sigma^{2}-\|X(\beta-\beta_{0})\|^{2}/(2\sigma^{2}) \}\pi(\beta)d\beta
\nonumber\\
	&\leq \int_{B^{\mathrm{c}}}\exp[ \{c\|P(\varepsilon+r)\|^{2}+c^{-1}\|X(\beta-\beta_{0})\|^{2}\}/(2\sigma^{2}) -\|X(\beta-\beta_{0})\|^{2}/(2\sigma^{2}) ]
\pi(\beta)d\beta
\nonumber\\
	&\leq \exp\{c\|P(\varepsilon+r)\|^{2}/(2\sigma^{2})-(1-c^{-1})(\sigma_{0}^{2}/\sigma^{2})R^{2}/2\}.
\label{eq: upper bound of numerator}
\end{align}
Letting $\tilde{R}=1/\sqrt{\pi\mathrm{e}n}$, we have
\begin{align}
	&\int \exp\{-\langle P(\varepsilon+r),X(\beta-\beta_{0})\rangle/\sigma^{2}-\|X(\beta-\beta_{0})\|^{2}/(2\sigma^{2})\}\pi(\beta)d\beta 
\nonumber\\
	&\geq \int_{B(\tilde{R})}\exp\{-\langle P(\varepsilon+r),X(\beta-\beta_{0})\rangle/\sigma^{2}-\|X(\beta-\beta_{0})\|^{2}/(2\sigma^{2})\}\pi(\beta)d\beta 
\nonumber\\
	&\geq \int_{B(\tilde{R})}\exp[-\{c\|P(\varepsilon+r)\|^{2}+c^{-1}\|X(\beta-\beta_{0})\|^{2}\}/(2\sigma^{2}) -\|X(\beta-\beta_{0})\|^{2}/(2\sigma^{2})]\pi(\beta)d\beta
\nonumber\\
	&\geq \exp \{ -c\|P(\varepsilon+r)\|^{2}/(2\sigma^{2})-(1+c^{-1})(\sigma_{0}^{2}/\sigma^{2})\tilde{R}^{2}/2  \} \Pi_{\beta}(B(\tilde{R})).
\label{eq: lower bound of denominator mid}
\end{align}

From (\ref{eq: lower bound of denominator mid}),
from Lemma \ref{lemma: prior small ball probability},
and from Condition 2.3,
we have
\begin{align}
	&\int \exp\{-\langle P(\varepsilon+r),X(\beta-\beta_{0})\rangle/\sigma^{2}-\|X(\beta-\beta_{0})\|^{2}/(2\sigma^{2})\}\pi(\beta)d\beta 
\nonumber\\
&\geq 
\frac{1-\phi_{\Pi_{\beta}}(\tilde{R})}{2} \exp\left\{p\log n /2 -p\log p - C_{1} p\log n
-c\frac{\|P(\varepsilon+r)\|^{2}}{2\sigma^{2}} -(1+c^{-1})\frac{\tilde{R}^{2}}{2}\frac{\sigma_{0}^{2}}{\sigma^{2}}\right\}
\nonumber\\
&\geq 4^{-1}\exp\left\{ p\log n /2 -p\log p -C_{1}p\log n
 - c\frac{\|P(\varepsilon+r)\|^{2}}{2\sigma^{2}} -(1+c^{-1})\frac{\sigma_{0}^{2}}{\sigma^{2}}\frac{\tilde{R}^{2}}{2} \right\},
\label{eq: lower bound of denominator}
\end{align}
where 
the first inequality follows from (\ref{eq: lower bound of denominator mid}) and from Lemma \ref{lemma: prior small ball probability}
and the second inequality follows from Condition 2.3.

Combining (\ref{eq: upper bound of numerator}) and (\ref{eq: lower bound of denominator}) with (\ref{eq: posterior tail mass mid}),
we have, for $Y \in H $,
\begin{align*}
&\int_{B^{\mathrm{c}}}\mathrm{e}^{-\|Y-X\beta\|^{2}/(2\sigma^{2})}\pi(\beta)d\beta \big{/} \int \mathrm{e}^{-\|Y-X\beta\|^{2}/(2\sigma^{2}) }\pi(\beta)d\beta 
\nonumber\\
&\leq
4\exp[(C_{1}+1/2)p\log n+\{(1+c^{-1})/(2n)\}(\sigma_{0}^{2}/\sigma^{2}) - \{(1-c^{-1})/2-c/16 \}(\sigma_{0}^{2}/\sigma^{2})R^{2} ].
\end{align*}
Taking $c=3$ completes the proof.
\end{proof}

\begin{lemma}\label{lemma: exponential inequality for quadratic forms}
Let $A$ be an $n \times n$ symmetric positive semidefinite matrix 
such that $\| A \|_{\op} \leq 1$ and $\rank (A) < n $.
Let $\varepsilon=(\varepsilon_{1},\ldots,\varepsilon_{n})^{\top}$ be a vector of i.i.d.~random variables with mean zero and unit variance.
\begin{enumerate}
\item[(a)] 
If in addition Condition 2.5 (a) holds for an integer $q \geq 2$ and $C_{3}>0$,
then
there exists a positive constant $\tilde{c}_{1}$ depending only on $q$ and $C_{3}$ such that,
for every  $R >\sqrt{\rank(A)}$,
\[
	\Pr \left (\varepsilon^{\top}A\varepsilon \geq R^{2} \right )\leq \tilde{c}_{1} \rank(A)/(R-\sqrt{\rank(A)})^{q}.
\]
\item[(b)] 
If instead Condition 2.5 (b) holds for $C_{3}>0$,
then
there exists a positive constant $\tilde{c}_{1}$ depending only on $C_{3}$ such that,
for every $R>0$,
\[
	\Pr\left (|\varepsilon^{\top}A\varepsilon-\Ep[\varepsilon^{\top}A\varepsilon]| > R^{2} \right )
		\leq 2 \exp\{-\tilde{c}_{1} \min \left( R^{4} / \|A\|^{2}_{\mathrm{HS}} , R^{2} \right)\},
\]
where $\|\cdot\|_{\mathrm{HS}}$ denotes the Hilbert--Schmidt norm.
\end{enumerate}
\end{lemma}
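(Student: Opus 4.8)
The plan is to treat the two parts separately: part~(b) follows in essentially one line from the Hanson--Wright inequality, while part~(a) requires a genuine argument separating the ``bulk'' fluctuation of the quadratic form from its polynomial tail. Throughout I would write $m:=\rank(A)$ and first record the elementary consequences of $A\succeq0$, $\|A\|_{\op}\leq1$: all eigenvalues of $A$ lie in $[0,1]$, hence $0\leq A_{ii}\leq1$, $\mathrm{tr}(A)=\sum_{i}A_{ii}\leq m$, $\|A\|_{\mathrm{HS}}^{2}=\sum_{k}\lambda_{k}(A)^{2}\leq\mathrm{tr}(A)\leq m$, $\sum_{i}A_{ii}^{s}\leq\sum_{i}A_{ii}\leq m$ for all $s\geq1$, and $(A^{2})_{ii}\leq A_{ii}$ (because $A^{2}\preceq A$); moreover $\Ep[\varepsilon^{\top}A\varepsilon]=\mathrm{tr}(A)$ since $\varepsilon$ is centered with unit variance.

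For part~(b): under Condition 2.5 (b) the $\varepsilon_{i}$ are independent, centered, and sub-Gaussian with $\psi_{2}$-norm at most a universal multiple of $C_{3}$, so I would quote the Hanson--Wright inequality
\[
\Pr\bigl(|\varepsilon^{\top}A\varepsilon-\Ep[\varepsilon^{\top}A\varepsilon]|>t\bigr)\leq 2\exp\Bigl(-c\min\bigl\{t^{2}/(C_{3}^{4}\|A\|_{\mathrm{HS}}^{2}),\ t/(C_{3}^{2}\|A\|_{\op})\bigr\}\Bigr),\qquad t\geq0,
\]
with $c$ universal, set $t=R^{2}$, and use $\|A\|_{\op}\leq1$ to absorb $C_{3}$ into the constant, obtaining the stated bound with $\tilde{c}_{1}$ depending only on $C_{3}$.

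For part~(a): since $R>\sqrt m$ and $\mathrm{tr}(A)\leq m$,
\[
\{\varepsilon^{\top}A\varepsilon\geq R^{2}\}\subseteq\{\varepsilon^{\top}A\varepsilon-\mathrm{tr}(A)\geq R^{2}-m\},\qquad R^{2}-m=(R-\sqrt m)(R+\sqrt m)\geq\max\bigl\{(R-\sqrt m)^{2},\ 2\sqrt m\,(R-\sqrt m)\bigr\},
\]
so it suffices to bound the upper tail of $\varepsilon^{\top}A\varepsilon-\mathrm{tr}(A)=\sum_{i}A_{ii}(\varepsilon_{i}^{2}-1)+\sum_{i\neq j}A_{ij}\varepsilon_{i}\varepsilon_{j}=:D+O$ at a level $t$ that is simultaneously $\gtrsim(R-\sqrt m)^{2}$ and $\gtrsim\sqrt m\,(R-\sqrt m)$. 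For the diagonal part $D$ --- a sum of independent centered variables with $q/2$-th moments $\lesssim_{q,C_{3}}A_{ii}^{q/2}$ and variances $\lesssim_{q,C_{3}}A_{ii}^{2}$ --- I would apply the Fuk--Nagaev inequality: $\Pr(D\geq t/2)\lesssim_{q,C_{3}}\bigl(\sum_{i}A_{ii}^{q/2}\bigr)t^{-q/2}+\exp(-ct^{2}/\sum_{i}A_{ii}^{2})\lesssim m\,t^{-q/2}+\exp(-ct^{2}/m)$. For the degenerate quadratic part $O$ I would decouple (replace the second factor by an independent copy $\varepsilon'$), condition on $\varepsilon'$ to obtain a linear form $\sum_{i}b_{i}\varepsilon_{i}$ with $b_{i}=\sum_{j\neq i}A_{ij}\varepsilon_{j}'$, apply the Fuk--Nagaev inequality conditionally, and integrate in $\varepsilon'$; the key inputs are $\Ep|b_{i}|^{q}\lesssim_{q,C_{3}}\bigl(\sum_{j}A_{ij}^{2}\bigr)^{q/2}+\sum_{j}|A_{ij}|^{q}\lesssim A_{ii}^{q/2}$ (using $(A^{2})_{ii}\leq A_{ii}$ and $|A_{ij}|\leq\sqrt{A_{ii}}$), hence $\sum_{i}\Ep|b_{i}|^{q}\lesssim m$, together with $\Ep\sum_{i}b_{i}^{2}=\sum_{i\neq j}A_{ij}^{2}\leq m$ and $\|A-\mathrm{diag}(A)\|_{\op}\leq2$. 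Combining the two contributions, then using $t\geq2\sqrt m\,(R-\sqrt m)$ and $t\geq(R-\sqrt m)^{2}$ to reduce the Gaussian-type exponents ($t^{2}/m$ and $t$) to $(R-\sqrt m)^{2}$, gives
\[
\Pr(\varepsilon^{\top}A\varepsilon\geq R^{2})\lesssim_{q,C_{3}}\frac{m}{(R-\sqrt m)^{q}}+\exp\bigl(-c(R-\sqrt m)^{2}\bigr)\lesssim_{q,C_{3}}\frac{m}{(R-\sqrt m)^{q}},
\]
the last step being $\sup_{v>0}v^{q}e^{-cv^{2}}<\infty$ together with $m\geq1$ (the regime $m(R-\sqrt m)^{-q}\geq1$ being trivial); this is the asserted bound with $\tilde{c}_{1}$ depending only on $q,C_{3}$.

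I expect the main obstacle to be the degenerate term $O$. Under only $q$ finite moments its relevant central moment is of order $\|A\|_{\mathrm{HS}}^{q/2}\asymp m^{q/4}$, which is too large to be absorbed by Markov's inequality once $q>4$, so its ``Gaussian part'' has to be routed through an exponential (Hanson--Wright/Fuk--Nagaev-type) estimate rather than through a moment bound; the polynomial ``heavy part'' is what forces the decoupling and the use of the weighted sum $\sum_{i}A_{ii}^{q/2}\leq\mathrm{tr}(A)\leq\rank(A)$, which is precisely what produces the $\rank(A)$ in the numerator. The bulk of the remaining work is bookkeeping: one must check that every error term --- in particular the fluctuations of the conditional variance $\sum_{i}b_{i}^{2}$ arising in the decoupling step, which I would control by a dyadic peeling over its level sets together with a crude application of the case already proved --- is dominated by $\rank(A)/(R-\sqrt{\rank(A)})^{q}$ uniformly over $R>\sqrt{\rank(A)}$, which leans throughout on the comparison $R^{2}-\rank(A)\geq\sqrt{\rank(A)}\,(R-\sqrt{\rank(A)})$.
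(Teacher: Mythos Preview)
The paper does not actually prove this lemma: part (b) is identified as the Hanson--Wright inequality with a citation to Hanson--Wright and Rudelson--Vershynin, and part (a) is delegated entirely to Corollary~5.1 of Baraud (2000), whose statement (for projections $\Pi$) reads $\Pr(\|\Pi\varepsilon\|^{2}\geq m+2\sqrt{mx}+x)\leq C(q)\,m\,x^{-q/2}$; the general case follows from the one-line reduction $\varepsilon^{\top}A\varepsilon\leq\varepsilon^{\top}\Pi\varepsilon$ with $\Pi$ the orthogonal projection onto the range of $A$. Your treatment of (b) is therefore exactly what the paper does, and for (a) you are going well beyond the paper by attempting a self-contained proof.

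The outline for (a) is sound through the diagonal part, but the step you yourself flag as the main obstacle is not resolved. After decoupling and conditioning, you are left with $\Ep\bigl[\exp(-ct^{2}/V)\bigr]$ with $V=\sum_i b_i^{2}=\|(A-\mathrm{diag}\,A)\varepsilon'\|^{2}$, and you propose to control the tail of $V$ by ``a crude application of the case already proved.'' That is circular if it means part (a) itself, and even granting the statement for other matrices it does not apply here: the relevant matrix $(A-\mathrm{diag}\,A)^{2}$ need not have rank bounded in terms of $\rank(A)$ (take $A=n^{-1}\mathbf{1}\mathbf{1}^{\top}$: then $\rank(A)=1$ while $\mathrm{diag}\,A$ has rank $n$). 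With only $\Ep V\leq m$, the dyadic peeling yields $\Ep[\exp(-ct^{2}/V)]\lesssim m/t^{2}$, which is too weak once $q>2$.

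What closes the argument is the moment bound $\Ep[V^{q/2}]\lesssim_{q,C_{3}} m^{q/2}$, obtained by applying Rosenthal's inequality in Hilbert space to $(A-\mathrm{diag}\,A)\varepsilon'=\sum_j c_j\varepsilon_j'$ (columns $c_j$), using $\sum_j\|c_j\|^{2}=\|A-\mathrm{diag}\,A\|_{\mathrm{HS}}^{2}\leq m$ and $\|c_j\|^{2}\leq A_{jj}\leq 1$. With $\Pr(V>2^{k}m)\lesssim 2^{-kq/2}$ the peeling gives $\Ep[\exp(-ct^{2}/V)]\lesssim (t^{2}/m)^{-q/2}\leq (R-\sqrt m)^{-q}$ and you are done. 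A cleaner variant avoids the exponential term altogether: the same double Rosenthal yields $\Ep|O|^{q}\lesssim m^{q/2}$, and then Markov with $t\geq 2\sqrt m\,(R-\sqrt m)$ already gives $\Pr(|O|\geq t/2)\lesssim (R-\sqrt m)^{-q}\leq m(R-\sqrt m)^{-q}$.
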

\begin{proof}
For Case (a), see Corollary 5.1 in \cite{Baraud(2000)}. 
The inequality in Case (b) is called the Hanson-Wright inequality; for a proof, we refer to \cite{HansonandWright(1971)} and \cite{RudelsonandVershynin(2013)}.
\end{proof}

\subsection{Proof of Proposition 2.5}

\subsubsection*{Notations}
We define additional notation before the proof.
Let $\widetilde{\mathcal{N}}:=\mathcal{N}(\hat{\beta}(Y),\sigma_{0}^{2}(X^{\top}X)^{-1})$.
Let $B:= B( c_{1} \sqrt{p\log n})$ and $H:= H(c_{1})$ for a sufficiently large $c_{1}>0$ depending on $C_{1}$ and $C_{2}$.
Let $\Pi_{\beta}^{B}(d\beta\mid Y)$ be the probability measure defined by
\begin{align*}
	\Pi_{\beta}^{B}(d\beta\mid Y) := 1_{\beta\in B}\Pi_{\beta}(d\beta\mid Y) \bigg{/} \int_{B}\Pi_{\beta}(d\tilde{\beta}\mid Y)
\end{align*}
and
let $\widetilde{\mathcal{N}}^{B}$ be the probability measure defined by
\begin{align*}
\widetilde{\mathcal{N}}^{B}(d\beta)
:=1_{\beta\in B}\widetilde{\mathcal{N}}(d\beta) \bigg{/} \int_{B}\widetilde{\mathcal{N}}(d\tilde{\beta}).
\end{align*}
Let $\Pi_{\beta}(\cdot \mid Y,\sigma^{2})$ be the distribution defined by
\begin{align*}
\Pi_{\beta}(d\beta\mid Y,\sigma^{2})
:=\mathrm{e}^{ -\|Y-X\beta\|^{2}/(2\sigma^{2}) }\pi(\beta)d\beta \bigg{/} \int \mathrm{e}^{ -\|Y-X\tilde{\beta}\|^{2}/(2\sigma^{2}) }\pi(\tilde{\beta})d\tilde{\beta}
\end{align*}
and
let $\Pi_{\beta}^{B}(\cdot\mid Y,\sigma^{2})$ be the distribution defined by
\begin{align*}
\Pi_{\beta}^{B}(d\beta\mid Y,\sigma^{2})
:=1_{\beta\in B}\mathrm{e}^{ -\|Y-X\beta\|^{2}/(2\sigma^{2})  }\pi(\beta)d\beta \bigg{/}
\int_{B} \mathrm{e}^{ -\|Y-X\tilde{\beta}\|^{2}/(2\sigma^{2}) }\pi(\tilde{\beta})d\tilde{\beta}.
\end{align*}
In the proof,
$\tilde{c}_{1} , \tilde{c}_{2}, \ldots $ are positive constants depending only on $C_{1}$, $C_{2}$, and $c_{1}$.

\subsubsection*{Proof sketch} 
We present a proof sketch ahead.
The triangle inequality gives
\begin{align}
\|\Pi_{\beta}(d\beta\mid Y)-\tilde{\mathcal{N}}\|_{\mathrm{TV}}
\leq
\|\Pi_{\beta}(d\beta\mid Y)-\Pi_{\beta}(d\beta\mid Y,\sigma_{0}^{2})\|_{\mathrm{TV}}
+
\|\Pi_{\beta}(d\beta\mid Y,\sigma_{0}^{2})-\tilde{\mathcal{N}}\|_{\mathrm{TV}}.
\label{eq: decomposition of target total variation}
\end{align}
Consider the first term on the right hand side of (\ref{eq: decomposition of target total variation}).
Let 
$S=S(\delta_{1}):=\left\{\sigma^{2}:|\sigma^{2}/\sigma^{2}_{0}-1\right|\leq \delta_{1}\}.$
From the application of Jensen's inequality to the function $x\to|x|$
and from Condition 2.2,
we have
\begin{align*}
	\|&\Pi_{\beta}(d\beta\mid Y)-\Pi_{\beta}(d\beta\mid Y,\sigma^{2}_{0})\|_{\mathrm{TV}}
	\\	
	&\leq \int_{S}\|\Pi_{\beta}(d\beta\mid Y,\sigma^{2})-\Pi_{\beta}(d\beta\mid Y,\sigma^{2}_{0})\|_{\mathrm{TV}} \Pi_{\sigma^{2}}(d\sigma^{2}\mid Y)
	+\delta_{2}
\end{align*}
with probability at least $1-\delta_{3}$.
Consider the first term on the rightmost hand in the above inequality.
The triangle inequality gives
\begin{align}
\int_{S}\|\Pi_{\beta}(d\beta\mid Y,\sigma^{2})-\Pi_{\beta}(d\beta\mid Y,\sigma^{2}_{0})\|_{\mathrm{TV}} \Pi_{\sigma^{2}}(d\sigma^{2}\mid Y)
\leq A_{1} + A_{2} + A_{3},
\label{eq: decomposition of marginal posterior}
\end{align}
where 
\begin{align*}
&A_{1}:=\int_{S}\|\Pi_{\beta}(d\beta\mid Y,\sigma^{2})-\Pi_{\beta}^{B}(d\beta\mid Y,\sigma^{2})\|_{\mathrm{TV}} \Pi_{\sigma^{2}}(d\sigma^{2}\mid Y),
\nonumber\\
&A_{2}:=
\int_{S}\|\Pi_{\beta}^{B}(d\beta\mid Y,\sigma^{2})-\Pi_{\beta}^{B}(d\beta\mid Y,\sigma^{2}_{0})\|_{\mathrm{TV}} \Pi_{\sigma^{2}}(d\sigma^{2}\mid Y),
\nonumber\\
&A_{3}:=
\int_{S}\|\Pi_{\beta}^{B}(d\beta\mid Y,\sigma^{2}_{0})-\Pi_{\beta}(d\beta\mid Y,\sigma^{2}_{0})\|_{\mathrm{TV}} \Pi_{\sigma^{2}}(d\sigma^{2}\mid Y).
\end{align*}
Upper bounds on $A_{1},A_{2},A_{3}$ will be presented in
(\ref{eq: bound 1 of the first term}),
(\ref{eq: bound 2 of the first term}), 
and (\ref{eq: bound 3 of the first term}),
respectively.
Consider the second term on the right hand side of (\ref{eq: decomposition of target total variation}).
The triangle inequality gives
\begin{align}
	\|\Pi_{\beta}(d\beta\mid Y,\sigma_{0}^{2})-\widetilde{\mathcal{N}}\|_{\mathrm{TV}}
	\leq
	A_{4} + A_{5} + A_{6}, 
	\label{eq: three steps}
\end{align}
where
$A_{4}:= \|\widetilde{\mathcal{N}}-\widetilde{\mathcal{N}}^{B}\|_{\mathrm{TV}},$
$A_{5}:= \|\widetilde{\mathcal{N}}^{B}-\Pi_{\beta}^{B}(d\beta\mid Y,\sigma_{0}^{2})\|_{\mathrm{TV}},$
and
$A_{6}:=  \|\Pi_{\beta}^{B}(d\beta\mid Y,\sigma_{0}^{2})-\Pi_{\beta}(d\beta\mid Y,\sigma_{0}^{2})\|_{\mathrm{TV}}.$
Upper bounds on $A_{4}, A_{5} ,A_{6}$ will be presented in 
(\ref{eq: bound 1 of the second term}), (\ref{eq: bound 2 of the second term}), and (\ref{eq: bound 3 of the second term}),
respectively.

\subsubsection*{Step 1: Upper bound on (\ref{eq: decomposition of marginal posterior})}
We start with bounding $A_{1}$ in (\ref{eq: decomposition of marginal posterior}).
From Lemmas \ref{lemma: Scheffe} and \ref{lemma: posterior tail mass},
taking a sufficiently large $c_{1}$ depending only on $C_{1}$ yields
\begin{align}
	A_{1}=\int_{S}\Pi_{\beta}( \beta \not \in B \mid Y,\sigma^{2}) \Pi_{\sigma^{2}}(d\sigma^{2}\mid Y)
	\leq 4 n^{-\tilde{c}_{1}p}.
\label{eq: bound 1 of the first term}
\end{align}

We next bound $A_{2}$ in (\ref{eq: decomposition of marginal posterior}).
Lemma \ref{lemma: Scheffe} gives
\begin{align*}
	A_{2}&=\int_{S}\int\left(1-\phi_{\Pi_{\beta},2}(\beta,\sigma^{2})\right)_{+}
	\Pi_{\beta}^{B}( d\beta\mid Y,\sigma_{0}^{2})
	\Pi_{\sigma^{2}}( d\sigma^{2}\mid Y),
\end{align*}
where
\begin{align*}
\phi_{\Pi_{\beta},2}(\beta,\sigma^{2}):=
\frac{\pi(\beta)\mathrm{e}^{-\|Y-X\beta\|^{2}/(2\sigma^{2}) } }
	{\int_{B}\mathrm{e}^{-\|Y-X\tilde{\beta}\|^{2}/(2\sigma^{2})} \pi(\tilde{\beta})d\tilde{\beta}}
	\frac{\int_{B}\mathrm{e}^{-\|Y-X\tilde{\beta}\|^{2}/(2\sigma_{0}^{2})}\pi(\tilde{\beta})d\tilde{\beta}}
	{\pi(\beta)\mathrm{e}^{-\|Y-X\beta\|^{2}/(2\sigma_{0}^{2})}}.
\end{align*}
From Cauchy--Schwarz's inequality and from Condition 2.4,
we have
\begin{align*}
	&\exp\{-\langle P(\varepsilon+r), X\beta_{0}-X\beta \rangle /\sigma^{2} - \|X\beta_{0}-X\beta\|^{2}/(2\sigma^{2}) \}
\nonumber\\
&\geq
	\mathrm{e}^{-\langle P(\varepsilon+r), X\beta_{0}-X\beta \rangle /\sigma^{2}_{0} - \|X\beta_{0}-X\beta\|^{2}/(2\sigma^{2}_{0})}
	\mathrm{e}^{- C_{2}c_{1}\delta_{1} p\log n /\{4(1-\delta_{1})\} -c^{2}_{1} \delta_{1} p\log n /(1-\delta_{1}) }.
\end{align*}
Likewise,
we have
\begin{align*}
	&\int_{B} \mathrm{e}^{-\langle P(\varepsilon+r), X\beta_{0}-X\tilde{\beta} \rangle /\sigma^{2} - \|X\beta_{0}-X\tilde{\beta}\|^{2}/(2\sigma^{2}) } \pi(\tilde{\beta})d\tilde{\beta}
\nonumber\\
&\leq
\mathrm{e}^{(C_{2}/4+c_{1}) c_{1}\delta_{1}(p\log n) /(1-\delta_{1})}
\int_{B} \mathrm{e}^{
-\langle P(\varepsilon+r), X\beta_{0}-X\beta \rangle/\sigma^{2}_{0}
-\|X\beta_{0}-X\beta\|^{2}/(2\sigma^{2}_{0})}\pi(\tilde{\beta})d\tilde{\beta}.
\end{align*}
Therefore,
we have
$\phi_{\Pi_{\beta},2}(\beta,\sigma^{2})
\geq
n^{-\tilde{c}_{2} \delta_{1}p}$
for $\beta\in B$, $Y\in H$, and $\sigma^{2}\in S$,
and thus
since $(1-\mathrm{e}^{-x})_{+}\leq x$ for $x>0$,
we obtain
\begin{align}
	A_{2} \leq \tilde{c}_{2}  \delta_{1}p\log n.
\label{eq: bound 2 of the first term}
\end{align}

We bound $A_{3}$ in (\ref{eq: decomposition of marginal posterior}).
From Lemmas \ref{lemma: Scheffe} and \ref{lemma: posterior tail mass},
taking a sufficiently large $c_{1}$ depending only on $C_{1}$,
we have
\begin{align}
	A_{3} &\leq \Pi_{\beta}( \beta \not\in B  \mid Y,\sigma_{0}^{2})
	\leq 4 n^{-\tilde{c}_{3}p} \text{ for }Y\in H.
\label{eq: bound 3 of the first term}
\end{align}
Therefore,
from inequalities (\ref{eq: bound 1 of the first term}), (\ref{eq: bound 2 of the first term}), and (\ref{eq: bound 3 of the first term}),
we obtain
\begin{align}
\int_{S}&\|\Pi_{\beta}(d\beta \mid Y,\sigma^{2})-\Pi_{\beta}(d\beta\mid Y,\sigma^{2}_{0})\|_{\mathrm{TV}}\Pi_{\sigma^{2}}(d\sigma^{2}\mid Y)
\leq
\tilde{c}_{4}\mathrm{e}^{-\tilde{c}_{5} p\log n}+\tilde{c}_{4} \delta_{1}p\log n,
\label{eq: upper bound  of the first term}
\end{align}
which completes Step 1.

\subsubsection*{Step 2: Upper bound on (\ref{eq: three steps})}
We start with bounding $A_{4}$ in (\ref{eq: three steps}).
From Lemmas \ref{lemma: Scheffe} and \ref{lemma: exponential inequality for quadratic forms},
we have
\begin{align}
A_{4} = \tilde{\mathcal{N}}(B^{\mathrm{c}})
\leq \exp\{- (3 c_{1} \sqrt{p\log n} / 4- \sqrt{p})^{2}/2\}.
\label{eq: bound 1 of the second term}
\end{align}

We next bound $A_{5}$ in (\ref{eq: three steps}).
Lemma \ref{lemma: Scheffe} gives
\begin{align*}
	A_{5} =\int (1- d\widetilde{\mathcal{N}}^{B}(\beta) / d\Pi_{\beta}^{B}(\cdot\mid Y,\sigma_{0}^{2}) )_{+}
	\Pi_{\beta}^{B}(d\beta \mid Y,\sigma_{0}^{2}).
\end{align*}
We denote by $\tilde{\phi}$ the density of $\widetilde{\mathcal{N}}$ with respect to the Lebesgue measure.
Observe that
\begin{align*}
	\frac{d\widetilde{\mathcal{N}}^{B}}{d\beta}(\beta)=\frac{\tilde{\phi}(\beta)}{\int_{B}\tilde{\phi}(\tilde{\beta})d\tilde{\beta}}
	\ \text{ and }\
	\frac{d\Pi_{\beta}^{B}(\cdot \mid Y,\sigma_{0}^{2})}{d\beta}(\beta)
	=\frac{\pi(\beta)\tilde{\phi}(\beta)}{\int_{B} \pi(\tilde{\beta})\tilde{\phi}(\tilde{\beta})d\tilde{\beta}}
\end{align*}
for $\beta\in B$.
Together with Jensen's inequality, this gives
\begin{align*}
	\int \Big{(}1-\frac{d\tilde{\mathcal{N}}^{B} }{d\Pi_{\beta}^{B}}(\beta \mid Y,\sigma_{0}^{2})\Big{)}_{+}\Pi_{\beta}^{B}(d\beta \mid Y)
	=& \int \left(1-\int_{B} \frac{\pi(\tilde{\beta})}{\pi(\beta)} \frac{\tilde{\phi}(\tilde{\beta})}{\int_{B} \tilde{\phi}(\beta')d\beta'}
	d\tilde{\beta} \right)_{+}\Pi_{\beta}^{B}(d\beta \mid Y)
	\nonumber\\
	\leq& \int \int_{B}\left(1-\frac{\pi(\tilde{\beta})}{\pi(\beta)}\right)_{+} \frac{\tilde{\phi}(\tilde{\beta})}{\int_{B}\tilde{\phi}(\beta')d\beta'}
	d\tilde{\beta}\Pi_{\beta}^{B}(d\beta \mid Y)
\end{align*}
and thus we obtain
\begin{align}
	A_{5} \leq \phi_{\Pi_{\beta}}(c_{1}\sqrt{p\log n}).
	\label{eq: bound 2 of the second term}
\end{align}

We bound $A_{6}$ in (\ref{eq: three steps}).
From Lemmas \ref{lemma: Scheffe} and \ref{lemma: posterior tail mass},
taking a sufficiently large $c_{1}>0$, we have
\begin{align}
	A_{6} =\Pi_{\beta}( \beta \not\in B  \mid Y,\sigma_{0}^{2})
	\leq 4 n^{ -\tilde{c}_{6} p}.
	\label{eq: bound 3 of the second term}
\end{align}
Therefore, 
from inequalities (\ref{eq: bound 1 of the second term}), (\ref{eq: bound 2 of the second term}), and (\ref{eq: bound 3 of the second term}),
we obtain
\begin{align}
\|\Pi_{\beta}&(d\beta\mid Y,\sigma_{0}^{2})-\widetilde{\mathcal{N}}\|_{\mathrm{TV}}
\leq
\phi_{\Pi_{\beta}}(c_{1}\sqrt{p\log n}) + \tilde{c}_{7} n^{-\tilde{c}_{8}p},
\label{eq: upper bound  of the second term}
\end{align}
which completes Step 2.

Combining (\ref{eq: upper bound of the first term}) and (\ref{eq: upper bound of the second term}) with (\ref{eq: decomposition of marginal posterior}) 
provides the upper bound of the target total variation and thus completes the proof.
\qed

\subsection{Proof of Proposition 2.6}

Let $c$ be any positive number.
Under Condition 2.5 (a),
Lemma \ref{lemma: exponential inequality for quadratic forms} (a) with $R=c\sqrt{p\log n}$
gives
\[\Pr ( Y \not\in H(c) ) \leq \tilde{c}_{1} p^{1-q/2} (\log n)^{-q/2}+\delta_{3}\]
for some $\tilde{c}_{1}>0$ depending only on $c$, $C_{3}$, and $q$.
Under Condition 2.5 (b),
Lemma \ref{lemma: exponential inequality for quadratic forms} (b) with $R=(c^{2}+1)p\log n$
gives
\[\Pr( Y \not\in H(c) ) \leq 2\exp[-\tilde{c}_{2} \min\{ p (\log n)^{2} , p\log n \} ]+\delta_{3}\]
for some $\tilde{c}_{2}>0$ depending only on $c$, $C_{3}$, and $q$.
Thus, we complete the proof.\qed

\section{Proofs of Propositions 2.1--2.4}

\subsection{Proof of Proposition 2.1}
Let $\tilde{B}(R) : = \{\beta:\|\beta-\beta_{0}\|\leq \sigma_{0}\uplambda^{1/2}R \}$ for $R>0$.
Observe that we have
\begin{align}
	\phi_{\Pi_{\beta}}(c\sqrt{p\log n})
	\leq \sup_{\beta,\tilde{\beta} \in \tilde{B}} (1- \exp[-\log\{\pi(\beta)/\pi(\tilde{\beta})\}])
	\leq c \sigma_{0}L \uplambda^{1/2} \sqrt{p\log n}
	\label{eq: flatness evaluation}
\end{align}
for any $c>0$,
where
the first inequality follows because $\|X(\beta-\beta_{0})\|\geq \uplambda^{-1/2}\|\beta-\beta_{0}\|$
and
the second inequality follows because $1-\mathrm{e}^{-x}\leq x$.
Substituting $c=1/(\sqrt{pn\log n})$ into (\ref{eq: flatness evaluation}),
we obtain the desired inequality
$\phi_{\Pi_{\beta}}(1/\sqrt{n})\leq L\uplambda^{1/2}\sigma_{0}/\sqrt{n}$.\qed

\subsection{Proof of Proposition 2.2}

We start with an isotropic prior.
Observe that
\begin{align*}
	\log \pi( \beta_{0} ) & = \log \rho (\|\beta_{0}\|) - \log \int \rho(\|\beta\|) d\beta
	\\
	&= \log \rho (\|\beta_{0}\|) - \log \left[
	\{p\pi^{p/2} / \Gamma(p/2+1)\} \int_{0}^{\infty} x^{p-1} \rho (x) dx \right]
	\\
	&\geq \log \left\{\inf_{x\in [0,B]} \rho(x)\right\} - \tilde{c}_{1} p\log p
        \\
	&\geq \log \left\{\inf_{x\in [0,B]} \rho(x)\right\} - \tilde{c}_{1} p \log p
	- \tilde{c}_{1} p \log n + \log \left\{\sqrt{\mathrm{det}(X^{\top}X)}/\sigma^{p}_{0}\right\}
\end{align*}
for some positive constant $\tilde{c}_{1}$ depending only on 
$m$ and $c$ appealing in the definition of an isotropic prior and Condition 2.6.
This shows that an isotropic prior satisfies Condition 2.1.
Taylor's expansion gives
\begin{align*}
 \left| \log \frac{\pi(\beta_{0}+s_{1})}{\pi(\beta_{0}+s_{2})}\right| 
	\leq 
	\sup_{x: 0 \leq x \leq B+ \sqrt{\sigma_{0}^2 \uplambda p\log n} } | (\log\rho)'(x) | ( \| \beta_{0} + s_{1} \| - \| \beta_{0} + s_{2} \|)
\end{align*}
for $s_{1},s_{2}\in\R^{p}$,
which shows that an isotropic prior satisfies the locally log-Lipschitz continuity,
Thus, we complete the proof for an isotropic prior.

We next prove the case with a product prior $\pi(\beta)=\prod_{i=1}^{p}\pi_{i}(\beta_{i})$.
Observe that
\begin{align*}
	\log \pi(\beta_{0})
	&\geq p \log \left\{\min_{i}\pi_{i}(0)\right\} -\tilde{L}p^{1/2}\|\beta_{0}\|
	\nonumber\\
	&\geq p\log \left\{\min_{i}\pi_{i}(0)\right\}-\tilde{L}Bp\log n
	\nonumber\\
	&\geq -\tilde{L} B p (1+o(1)) \log n - \tilde{c}_{2} p \log n + \log \{\sqrt{\mathrm{det}(X^{\top}X)}/\sigma^{p}_{0}\}
\end{align*}
for some positive constant $\tilde{c}_{2}$ depending only on
$c$ appearing in Condition 2.6.
This shows that a product prior satisfies Condition 2.1.
The Lipschitz continuity of $\log\pi(\beta)$ gives 
\begin{align*}
	|\log\pi(\beta)-\log\pi(\beta_{0})|\leq \sum_{i=1}^{p}|\log\pi_{i}(\beta_{i})-\log\pi_{i}(\beta_{0,i})|
	\leq \tilde{L}p^{1/2}\|\beta-\beta_{0}\|,
\end{align*}
which shows that a product prior satisfies the locally log-Lipschitz continuity and thus completes the proof.\qed

\subsection{Proof of Proposition 2.3}
We only prove the theorem under Condition 2.5 (a).
The proof under Condition 2.5 (b) is done
by replacing Lemma \ref{lemma: exponential inequality for quadratic forms} (a) with Lemma \ref{lemma: exponential inequality for quadratic forms} (b).

Observe that
\begin{align*}
\hat{\sigma}^{2}_{\mathrm{u}} 
&= \|Y-X(X^{\top}X)^{-1}X^{\top}Y\|^{2} \big{/} \big{\{} \sigma_{0}^{2}(n-p)\sigma_{0}^{2} \big{\}} \\
&\leq \big{\{} \|\varepsilon - X(X^{\top}X)^{-1}X^{\top}\varepsilon\|^{2}+ 2 \|r- X(X^{\top}X)^{-1}X^{\top}r\|^{2} + |\varepsilon^{\top}u |^{2} \big{\}}
\big{/} \{ \sigma_{0}^{2} (n-p) \} \\
&= \big{\{} \varepsilon^{\top}\tilde{A}\varepsilon + 2 \|r- X(X^{\top}X)^{-1}X^{\top}r\|^{2} \big{\}} \big{/} \{\sigma_{0}^{2}(n-p)\},
\end{align*}
where 
\begin{align*}
u:= 
\begin{cases} 
\frac{\{I-X(X^{\top}X)^{-1}X^{\top}\} r}{ \| \{I-X(X^{\top}X)^{-1}X^{\top}\}r\|}
& \text{ if $\{I-X(X^{\top}X)^{-1}X^{\top}\}r \neq 0 $}, \\
\text{ arbitrary }
& \text{ if otherwise},
\end{cases}
\end{align*}
and $\tilde{A}:= I-X(X^{\top}X)^{-1}X^{\top} + uu^{\top}$.
Then Lemma \ref{lemma: exponential inequality for quadratic forms} (a) gives
\[\Pr ( \hat{\sigma}^2_{\mathrm{u}}/\sigma_{0}^{2} - 1 \geq \delta_{1} ) \leq \tilde{c}_{1}/(n-p)^{q/2-1}\tilde{\delta}_{1}^{q}\]
for some positive constant $\tilde{c}_{1}$ depending only on $q$.

Next, we will show that
\begin{align}
\Pr\left( \widehat{\sigma}^{2}_{\mathrm{u}}(Y) / \sigma_{0}^{2} - 1 \leq -\delta_{1} \right)
\leq 
\tilde{c}_{2} \frac{\max\{n^{q/4},n\}}{ \delta_{1}^{q/2}(n-p)^{q/2}} + \tilde{c}_{2} \frac{ p^{q/2+1}}{ (n-p)^{q}\delta_{1}^{q}}
\label{eq:variance_bound_lower}
\end{align}
for some positive constant $\tilde{c}_{2}$ depending only on $q$.
Letting $\tilde{P}$ be the projection onto the linear space spanned by columns of $X$ and $(I-X(X^{\top}X)^{-1}X^{\top})r$,
we have
\begin{align}
&\Pr\left( \widehat{\sigma}^{2}_{\mathrm{u}}(Y) / \sigma_{0}^{2} -1 \leq -\delta_{1} \right)
\nonumber\\
&\leq \Pr\left( \{\|\varepsilon\|^{2}-\|\tilde{P}\varepsilon\|^{2}\} / \{\sigma_{0}^{2}(n-p)\} \leq 1-\delta_{1} \right)
\nonumber\\
&\leq
\Pr\left( \|\varepsilon\|^{2} / \sigma_{0}^{2}(n-p) - n / (n-p) \leq - \delta_{1} / 2 \right)
+
\Pr\left( \| \tilde{P} \varepsilon\|^{2} / \sigma_{0}^{2}(n-p) \geq p/(n-p) + \delta_{1} / 2 \right).
\label{eq:decomp_variance_bound_lower}
\end{align}
For bounding the first term on the rightmost side in (\ref{eq:decomp_variance_bound_lower}), 
we use Rosenthal's inequality:
\begin{lemma}[Rosenthal's inequality; see \cite{Petrov(1995)} and \cite{vonBahrandEsseen(1965)}.]
For some positive constant $\tilde{c}_{3}$ depending only on $q$, we have
$\Ep\left|\|\varepsilon/\sigma_{0}\|^{2}-n\right|^{q/2}\leq \tilde{c}_{3} \max\{n^{q/4},n\}.$
\end{lemma}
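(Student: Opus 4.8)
The plan is to obtain this moment bound directly from the classical Rosenthal inequality for sums of independent mean-zero random variables (see \cite{Petrov(1995)}, and also \cite{vonBahrandEsseen(1965)}), applied to the centred summands $X_{i}:=\varepsilon_{i}^{2}/\sigma_{0}^{2}-1$, $i=1,\dots,n$. These are i.i.d.\ with $\Ep[X_{i}]=0$, since $\Ep[\varepsilon_{i}^{2}]=\sigma_{0}^{2}$, and $\|\varepsilon/\sigma_{0}\|^{2}-n=\sum_{i=1}^{n}X_{i}$. Because Condition 2.5~(a) is in force with an integer $q\geq 4$, the exponent $q/2$ is itself an integer $\geq 2$, which is precisely the regime in which Rosenthal's inequality applies; it gives
\[
\Ep\Big|\sum_{i=1}^{n}X_{i}\Big|^{q/2}\leq \tilde{c}\Big(n\,\Ep[|X_{1}|^{q/2}]+\big(n\,\Ep[X_{1}^{2}]\big)^{q/4}\Big)
\]
for a constant $\tilde{c}$ depending only on $q$.

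It then remains to bound the two scalar moments on the right-hand side by constants. For the first, the convexity bound $|a-1|^{q/2}\leq 2^{q/2-1}(|a|^{q/2}+1)$ with $a=\varepsilon_{1}^{2}/\sigma_{0}^{2}$ gives $\Ep[|X_{1}|^{q/2}]\leq 2^{q/2-1}(\Ep[|\varepsilon_{1}/\sigma_{0}|^{q}]+1)\leq 2^{q/2-1}(C_{3}^{q}+1)$, using Condition 2.5~(a). For the second, since $\Ep[X_{1}]=0$ and $4/q\leq 1$, Jensen's inequality gives $\Ep[X_{1}^{2}]\leq \Ep[(\varepsilon_{1}^{2}/\sigma_{0}^{2})^{2}]=\Ep[|\varepsilon_{1}/\sigma_{0}|^{4}]\leq (\Ep[|\varepsilon_{1}/\sigma_{0}|^{q}])^{4/q}\leq C_{3}^{4}$. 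Substituting these two bounds, $\Ep|\sum_{i}X_{i}|^{q/2}\leq \tilde{c}_{3}(n+n^{q/4})\leq 2\tilde{c}_{3}\max\{n^{q/4},n\}$, which is the asserted inequality after renaming $\tilde{c}_{3}$; strictly the constant also depends on the model constant $C_{3}$, but this is immaterial for the subsequent use.

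I do not expect any genuine obstacle in this lemma itself: it is a routine application of a standard inequality, and the only points needing (minimal) attention are that $q/2\geq 2$, so that the sharp form of Rosenthal's inequality is available rather than the weaker von Bahr--Esseen estimate, and that for $q\geq 4$ the term $n^{q/4}$ dominates $n$, which is why the bound is written with a $\max$. The delicate part of Proposition 2.3 lies rather in the lower-tail estimate (\ref{eq:variance_bound_lower}): there one feeds this lemma into a Markov bound of order $q/2$ to control the first term of (\ref{eq:decomp_variance_bound_lower}) (on that event $\bigl|\|\varepsilon/\sigma_{0}\|^{2}-n\bigr|\geq \delta_{1}(n-p)/2$), while the second term, involving $\varepsilon^{\top}\tilde{P}\varepsilon/\sigma_{0}^{2}$ for the projection $\tilde{P}$ of rank at most $p+1$, is bounded by Lemma \ref{lemma: exponential inequality for quadratic forms}~(a) with a threshold of order $p+\delta_{1}(n-p)/2$, the positivity condition on $\tilde{\delta}_{1}$ (together with $n\geq cp$) being what renders this threshold admissible for the lemma; combined with the already-established upper-tail bound this delivers the two-sided estimate of Proposition 2.3.
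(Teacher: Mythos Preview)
Your argument is correct and is precisely the standard derivation that the paper's bare citation to \cite{Petrov(1995)} and \cite{vonBahrandEsseen(1965)} is pointing to: write $\|\varepsilon/\sigma_{0}\|^{2}-n$ as a sum of i.i.d.\ centred summands $X_{i}=\varepsilon_{i}^{2}/\sigma_{0}^{2}-1$, apply Rosenthal's inequality at exponent $q/2\geq 2$, and bound the individual moments via Condition~2.5~(a). One small slip: $q$ is an integer $\geq 4$ but $q/2$ need not be an integer (e.g.\ $q=5$); this is harmless since Rosenthal's inequality holds for all real exponents $\geq 2$, and your observation that the constant also depends on $C_{3}$ is accurate and, as you note, absorbed into the constants of Proposition~2.3.
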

From Markov's inequality and from Rosenthal's inequality, we have
\begin{align}
\Pr &\left ( \|\varepsilon\|^{2} / \{\sigma_{0}^{2}(n-p)\} - n / (n-p) \leq  - \delta_{1} / 2 \right)
\leq \tilde{c}_{4} \max\{n^{q/4},n\} / \{\delta_{1}^{q/2}(n-p)^{q/2}\}
\label{eq:decomp_variance_bound_lower_1}
\end{align}
for some $\tilde{c}_{4}>0$ depending only on $q$.
For bounding the second term on the rightmost hand side in (\ref{eq:decomp_variance_bound_lower}),
Lemma \ref{lemma: exponential inequality for quadratic forms} (a) with $R = \sqrt{p+(n-p)\delta_{1}/2}$ gives
\begin{align}
\Pr &\left( \| \tilde{P}\varepsilon\|^{2} / \{\sigma_{0}^{2}(n-p)\} \geq p/(n-p)+ \delta_{1}/2 \right) \leq \tilde{c}_{4} n^{1-q/2}/\delta_{1}^{q/2}.
\label{eq:decomp_variance_bound_lower_2}
\end{align}
Combining (\ref{eq:decomp_variance_bound_lower_1}) and (\ref{eq:decomp_variance_bound_lower_2}) with (\ref{eq:decomp_variance_bound_lower}),
we have (\ref{eq:variance_bound_lower}),
which completes the proof under Condition 2.5 (a).\qed

\subsection{Proof of Proposition 2.4}
The marginal posterior distribution of $\sigma^{2}$ is given by
the inverse Gamma distribution $\mathrm{IG}(a^{*},b^{*})$,
where $a^{*} = \mu_{1} + n/2 - p/2$ and $b^{*} = \mu_{2} +\|Y-PY\|^{2}/2$.
The mean of this marginal posterior is 
$\{ 2\mu_{2} + \| (I-X (X^{\top}X)^{-1} X^{\top} ) Y \|^{2} \} / \{ 2\mu_{1} + n-p -2 \};$
while the variance is 
$2 \{ 2\mu_{2} + \| (I-X (X^{\top}X)^{-1} X^{\top} ) Y  \|^{2} \}^{2}  /  \{ (2\mu_{1} + n-p - 2 )^{2} ( 2\mu_{1} + n - p - 4) \} .$
From Chebyshev's inequality, we have
\[
\Pi_{\sigma^{2}} ( \sigma^{2} : |\sigma^{2} / \sigma^{2}_{0} - 1 | \geq \delta_{1} \mid Y )
\leq \tilde{c}_{1} \frac{\| (I-X (X^{\top}X)^{-1} X^{\top} ) Y \|^{2}}{n^{2} (\delta_{1} - |\Ep[\sigma^{2}/\sigma^{2}_{0} \mid Y] - 1| )^2}
\]
for some positive constant $\tilde{c}_{1}$ depending only on $\mu_{1}$ and $\mu_{2}$.
From the proof of Proposition 2.3,
we obtain the desired upper bound of $\Pr( \| (I-X(X^{\top}X)^{-1}X^{\top } ) Y  \|^{2} / (n-p) - 1 \geq \delta_{1}/2 )$
and thus complete the proof.
\qed

\section{Proofs for Section 3}

In this section, we provide proofs for Section 3.

\subsection{Proof of Proposition 3.1}

We use the same notation as in the proof sketch.
In addition, let $\{N_{l,k}:(l,k)\in\mathcal{I}(J)\}\sim \mathcal{N}(0,1)$ i.i.d.

\subsubsection*{Step 1: Upper bounds on $\Pr(\max_{(l,k)\in\mathcal{I}(J)}|\varepsilon_{l,k}/w_{l}|\leq \hat{R}_{\alpha})$ and $\hat{R}_{\alpha}$}

We start with bounding $\Pr(\max_{(l,k)\in\mathcal{I}(J)}|\varepsilon_{l,k}/w_{l}|\leq \hat{R}_{\alpha})$
and $\hat{R}_{\alpha}$.
From Theorem 2.1,
there exist $\tilde{c}_{1},\tilde{c}_{2}$ depending only on $C_{1}$ in Condition 2.1 for which we have
\begin{align}
\Big{|}\Pr\Big{(}\max_{(l,k)\in\mathcal{I}(J)}|\varepsilon_{l,k}/w_{l}|\leq \hat{R}_{\alpha}\Big{)} - (1-\alpha)\Big{|}
\leq 
\phi_{\Pi_{\beta}}\Big{(}\tilde{c}_{1}\sqrt{2^{J}\log n}\Big{)} + \tilde{c}_{1} n^{-\tilde{c}_{2}2^{J}}.
\label{eq: bound on coverage errors in low frequencies}
\end{align}
From the assumption that $w_{l}\geq \sqrt{l}$ and since
$\Ep [ \max_{(l,k)\in\mathcal{I}(J)}  |  N_{l,k} / \sqrt{l} |  ] < K$
with some universal constant $K$ (cf.~the proof of Proposition 2 in \cite{CastilloandNickl(2014)}),
we have
\begin{align}
\Ep\left[\max_{(l,k)\in\mathcal{I}(J)}\left|\frac{N_{l,k}}{w_{l}}\right|\right] 
\leq 
\Ep\left[\max_{(l,k)\in\mathcal{I}(J)}\left|\frac{N_{l,k}}{\sqrt{l}}\right|\right]
\leq K.
\label{eq: bound on E max}
\end{align}
Assume that the right hand side in (\ref{eq: bound on coverage errors in low frequencies}) is smaller than $\alpha/2$.
Then, 
from Theorem 2.1 and from (\ref{eq: bound on E max}),
there exists $\tilde{c}_{3}>0$ depending only on $\alpha$ for which we have
\begin{align}
\hat{R}_{\alpha} \leq \frac{\tilde{c}_{3}}{\sqrt{n}} \Ep\left[\max_{(l,k)\in\mathcal{I}(J)}\left|\frac{N_{l,k}}{w_{l}}\right|\right] 
\leq \frac{\tilde{c}_{3}K}{\sqrt{n}}
\label{eq: bound on hat R in low frequencies}
\end{align}
with probability at least $1-\tilde{c}_{1}n^{-\tilde{c}_{2}2^{J}}$.
This completes Step 1.
Note that it is unnecessary for upper bounding $\hat{R}_{\alpha}$ to assume that the right hand side is also smaller than $(1-\alpha)/2$;
see the remark below Theorem 2.1.

\subsubsection*{Step 2: Upper bound on $\Pr(Y_{\infty}\not\in\tilde{H}')$}
Next we bound $\Pr(Y_{\infty}\not\in\tilde{H}')$.
From Theorem 2.1,
we have the set $H$ satisfying the following:
\begin{enumerate}
\item[P1] 
Assume that the right hand side in (\ref{eq: bound on coverage errors in low frequencies}) is smaller than $(1-\alpha)/2$.
Then,
there exists $\tilde{c}_{4}>0$ depending only on $\alpha$ such that
we have
\begin{align*}
\tilde{c}_{4} \frac{1}{\sqrt{n}} \frac{J^{1/2}}{\overline{w}_{J}} \leq \hat{R}_{\alpha}
\end{align*}
for $Y_{\infty}\in H$ and for $J\ge J_{\alpha}$ with $J_{\alpha}$ depending only on $\alpha$;
\item[P2] We have $\Pr(Y_{\infty}\not\in H)\leq \tilde{c}_{1}n^{-\tilde{c}_{2}2^{J}}$.
\end{enumerate}
From the first property P1 of $H$, we have
\begin{align*}
\Pr(Y_{\infty}\not\in \tilde{H}') 
&= \Pr(Y_{\infty}\not\in\tilde{H}',Y_{\infty}\in H) + \Pr(Y_{\infty}\not\in\tilde{H}',Y_{\infty}\not\in H)\\
&\leq 
\Pr\left(\sup_{ J \leq l <\infty, 0\leq k \leq 2^{l}-1} \left|\frac{N_{l,k}}{w_{l}}\right| \geq \tilde{c}_{4}\frac{J^{1/2}}{\overline{w}_{J}} \right)
+ \Pr(Y_{\infty}\not\in H).
\end{align*}
The second property P2 of $H$ gives an upper bound on $\Pr(Y_{\infty}\not \in H)$.
From Lemma 4.4,
we have, for $J\leq l<\infty$,
\begin{align*}
&\Pr\left(\max_{0\leq k \leq 2^{l}-1} \left|\frac{N_{l,k}}{w_{l}}\right| \geq \tilde{c}_{4} \frac{J^{1/2}}{\overline{w}_{J}} \right)\\
&\leq \Pr\left(\max_{0\leq k \leq 2^{l}-1} |N_{l,k}| \geq \tilde{c}_{4} \frac{J^{1/2}}{\overline{w}_{J}} u_{J} \sqrt{l} \right) \\
&\leq \Pr\left(\max_{0\leq k \leq 2^{l}-1} |N_{l,k}| - \Ep\left[\max_{0 \leq k \leq 2^{l}-1} |N_{l,k}|\right] \geq \tilde{c}_{4} \frac{J^{1/2}}{\overline{w}_{J}} u_{J} \sqrt{l} - \Ep[\max_{0 \leq k \leq 2^{l}-1} |N_{l,k}|] \right)\\
&\leq \Pr\left(\max_{0\leq k \leq 2^{l}-1} |N_{l,k}| - \Ep\left[\max_{0 \leq k \leq 2^{l}-1} |N_{l,k}|\right] \geq \tilde{c}_{4} \frac{J^{1/2}}{\overline{w}_{J}} u_{J} \sqrt{l} - \sqrt{2} \sqrt{l} \right)\\
&\leq 2\exp\left\{ - \tilde{c}_{5} \left( (J^{1/2}/\overline{w}_{J})u_{J}-\sqrt{2}/\tilde{c}_{4}\right)^{2}l \right\}
\end{align*}
with a positive constant $\tilde{c}_{5}$ depending only on $\tilde{c}_{4}$.
Together with the assumption that $1\leq (J/\overline{w}_{J}^{2})u_{J}^{2}$,
this implies
that there exist $\tilde{c}_{6},\tilde{c}_{7}>0$ depending only on $\tilde{c}_{4}$ such that we have
\begin{align*}
\Pr\left(\sup_{ J \leq l <\infty, 0\leq k \leq 2^{l}-1} \left|\frac{N_{l,k}}{w_{l}}\right| 
\geq \tilde{c}_{4}\frac{J^{1/2}}{\overline{w}_{J}} \right)
&\leq \sum_{J \leq l < \infty} \Pr\Bigg{(}\max_{0\leq k \leq 2^{l}-1} \left|\frac{N_{l,k}}{w_{l}}\right| 
\geq \tilde{c}_{4} \frac{J^{1/2}}{\overline{w}_{J}} \Bigg{)} \\
&\leq \sum_{J \leq l < \infty} 2\exp \{ - \tilde{c}_{6} l (J/\overline{w}_{J}^{2})u_{J}^{2} \} \\
&\leq \tilde{c}_{7}\exp\{-\tilde{c}_{6}J(J/\overline{w}_{J}^{2})u_{J}^{2}\}
\end{align*}
for $J$ satisfying
$\{ (J^{1/2}/\overline{w}_{J}) u_{J} - \sqrt{2} / \tilde{c}_{4}\}^{2} \geq (1/2)(J^{1/2}/\overline{w}_{J})^{2}u_{J}^{2}$
(such $J$ exists since $(J/\overline{w}_{J}^{2})u_{J}^{2}\uparrow \infty$ as $J\to\infty$).
Thus we complete Step 2.

\subsubsection*{Step 3: Upper bound on the $L^{\infty}$-diameter}
Finally we provide a high-probability upper bound on the $L^{\infty}$-diameter.
Fix $f,g\in \mathcal{C}_{w}^{B}(\hat{f}_{\infty},\hat{R}_{\alpha})$ and let $h:=f-g$.
From the property of a wavelet basis (cf.~p.~325 of \cite{GineandNickl_book}),
there exists $\tilde{c}_{8}>0$ depending only on $\{\psi_{l,k}:(l,k)\in\mathcal{I}_{\infty}\}$
for which we have
\[
\|h\|_{\infty} \leq 
\tilde{c}_{8}\sum_{J_{0}-1\leq l<\infty}2^{l/2}\max_{0\leq k\leq 2^{l}-1}|\langle h ,\psi_{l,k} \rangle|
= \tilde{c}_{8} ( A_{1} + A_{2} ),
\]
where 
\[
A_{1} := \sum_{J_{0}-1 \leq l \leq J-1} 2^{l/2}\max_{0\leq k \leq 2^{l}-1} |\langle h, \psi_{l,k} \rangle|
\text{ and }
A_{2} := \sum_{J \leq l < \infty } 2^{l/2} \max_{ 0 \leq k \leq 2^{l}-1} |\langle h, \psi_{l,k} \rangle|.
\]
Inequality (\ref{eq: bound on hat R in low frequencies}) gives
\begin{align*}
A_{1} \leq \max_{J_{0}-1 \leq l \leq J-1}\left\{\frac{w_{l}}{\sqrt{l}}\right\} 
\sum_{J_{0}-1 \leq l \leq J-1}2^{l/2}\sqrt{l} 2\hat{R}_{\alpha}
      \leq \tilde{c}_{9} v_{J} \sqrt{\frac{2^{J}J}{n}}
\end{align*}
with some $\tilde{c}_{9}>0$ depending on $\tilde{c}_{3}$ appearing in (\ref{eq: bound on hat R in low frequencies}).
Since $\max\{\|f\|_{B^{s}_{\infty,\infty}}, \|g\|_{B^{s}_{\infty,\infty}}\} \leq B$,
we have
\begin{align*}
A_{2} \leq \sum_{J\leq l < \infty} 2^{-ls}\max_{0\leq k \leq 2^{l}-1}2^{l(s+1/2)}|\langle h, \psi_{l,k}\rangle|
      \leq 2^{-Js+2}B,
\end{align*}
which completes Step 3 and thus completes the proof.

\begin{remark}[The choice of $J$ in the second part of Proposition 3.1]
For ``sufficiently large $J$" appearing in the second part of Proposition 3.1,
we can take $J$ satisfying $J\ge J_{\alpha}$ and
\[
\{ (J^{1/2}/\overline{w}_{J})u_{J} - \sqrt{2}/\tilde{c}_{4} \}^{2} \ge (1/2) (J^{1/2} / \overline{w}_{J})^{2} u_{J}^{2},
\]
where $J_{\alpha}$ and $\tilde{c}_{4}=\tilde{c}_{4}(\alpha)$ are the constants in the property P1.
Thus even in the case that $\{w_{l}\}$ depends on $n$ as in Remark 3.1,
we can apply Proposition 3.1 to deduce the coverage error.
\end{remark}

\subsection{Proof of Proposition 3.2}

The proof follows essentially the same line and the same notation as those of Proposition 3.1.
The only difference is the way of bounding $\Pr(Y\not\in\tilde{H}_{2})$.
From the lower estimate of $\hat{R}_{\alpha}$ in Theorem 2.1,
we have
\[
\Pr(Y\not\in\tilde{H}_{2},Y\in H) \leq \Pr\left(\sup_{J\leq l < \infty, 0\leq k \leq 2^{l}-1} \frac{\langle f_{0}, \psi_{l,k} \rangle}{w_{l}}
\geq \tilde{c}_{1}\frac{(J^{1/2}/\overline{w}_{J})}{\sqrt{n}} \right)
\]
for sufficiently large $J$ depending only on $\alpha$.
From the assumption that $\|f_{0}\|_{B^{s}_{\infty,\infty}} \leq B$,
we have
\[
\Pr(Y\not\in\tilde{H}_{2},Y\in H) \leq \Pr\left( \frac{ \sqrt{n} \overline{w}_{J} B}{u_{J} J 2^{J(s+1/2)}} 
\geq \tilde{c}_{1} \right).
\]
This completes the proof.
\qed

\subsection{Proof of Proposition 3.3}

We use the same notation as in the proof sketch.
In addition, let $\{N_{l,k}:(l,k)\in\mathcal{I}(J)\}\sim \mathcal{N}(0,1)$ i.i.d..
From the near-orthogonality of $\{v^{(1)}_{l,k}\}$,
there exist positive constants $\underline{b}$ and $\overline{b}$
depending only on $K$ and $\{\psi_{l,k}:(l,k)\in\mathcal{I}_{\infty}\}$
such that
\begin{align}
\underline{b} I_{2^{J} } / n \preceq \Sigma \preceq \overline{b} I_{2^{J} } / n.
\label{eq: near-orthogonality}
\end{align}

\subsubsection*{Step 0: the well-definedness of $\hat{f}_{\infty}$}

Before proving the proposition, we will show that $\hat{f}_{\infty}$ converges almost surely in $\mathcal{M}_{0}(w)$ for any sequence $w$ such that $\min_{0\le k \le 2^{l}-1} \kappa_{l,k}w_{l} / \sqrt{l} \uparrow \infty$.
Here 
for a positive sequence $w=(w_{1},w_{2},\ldots)$,
$\mathcal{M}(w):=\{f: \|f\|_{\mathcal{M}(w)}  :=  \sum_{(l,k)\in\mathcal{I}_{\infty}} | \langle f , \psi_{l,k} \rangle  |  /  w_{l}  < \infty  \}$
and $\mathcal{M}_{0} (w)   :=   \{   f  \in  \mathcal{M}(w) : \lim_{l\to\infty}\max_{k=0,\ldots, 2^{l}-1} | \langle  f, \psi_{l,k} \rangle | / w_{l} = 0 \}$.

We begin with showing that $\|\hat{f}_{\infty}\|_{\mathcal{M}(w)}$ has a finite expectation, which implies it exists almost surely in $\mathcal{M}(w)$. Observe that for $M>0$,
\begin{align*}
\Pr\left( \|\hat{f}_{\infty} - f_{0} \|_{\mathcal{M}(w)} > \frac{M}{\sqrt{n}} \right)
&= \Pr  \left( \sup_{(l,k) \in \mathcal{I}_{\infty}} \frac{| \tilde{\varepsilon}_{l,k}  | }{\kappa_{l,k}w_{l}} > \frac{M}{\sqrt{n}}  \right)  \\
&\le \sum_{J_{0}-1 \le l < \infty} \Pr \left( \max_{0\le k \le 2^{l}-1} \frac{|\tilde{\varepsilon}_{l,k} |}{ \sqrt{l} }  > \frac{M}{\sqrt{n}} \min_{0 \le k \le 2^{l}-1} \frac{\kappa_{l,k}w_{l}}{\sqrt{l}} \right)\\
&\le \sum_{J_{0}-1 \le l < \infty} \Pr \left( \max_{0\le k \le 2^{l}-1} |N_{l,k}| > \frac{M}{\overline{b}} \min_{0\le k \le 2^{l}-1} \frac{\kappa_{l,k}w_{l}}{\sqrt{l}} \sqrt{l} \right),
\end{align*}
where the last inequality follows from Anderson's lemma (Lemma 4.3).
Together with the concentration inequality (Lemma 4.4) and the maximal inequality,
this implies that for sufficiently large $M>0$,
\begin{align*}
\Pr( \| \hat{f}_{\infty} - f_{0} \|_{\mathcal{M}(w)} > M/\sqrt{n} )
&\le 2\sum_{J_{0} -1 \le l < \infty} \exp\left[ - \left\{ \frac{M}{\overline{b}} \min_{0 \le k \le 2^{l}-1}\frac{\kappa_{l,k}w_{l}}{\sqrt{l}}  - \sqrt{2} \right\}^{2} l/2  \right]\\
&\le \tilde{c}_{0} \exp\{ - \tilde{c}M^{2} \}
\end{align*}
with some $\tilde{c}_{0},\tilde{c}>0$,
where, for example, take $M$ such that
\begin{align*}
\frac{M}{\overline{b}}\min_{(l,k)\in\mathcal{I}_{\infty}}\frac{\kappa_{l,k}w_{l}}{\sqrt{l}} - \sqrt{2} 
> 
\frac{1}{2}
\frac{M}{\overline{b}}\min_{(l,k)\in\mathcal{I}_{\infty}}\frac{\kappa_{l,k}w_{l}}{\sqrt{l}}.
\end{align*}
Note that 
\[\inf_{(l,k)\in\mathcal{I}_{\infty}} \frac{\kappa_{l,k} w_{l}}{\sqrt{l}}
= \min_{(l,k)\in\mathcal{I}_{\infty}} \frac{\kappa_{l,k} w_{l}}{\sqrt{l}} > 0
\]
by the assumption that $\min_{0\le k \le 2^{l}-1} \kappa_{l,k}w_{l} / \sqrt{l}\uparrow \infty$.
Using $\Ep[X] \le K + \int_{K}^{\infty} \Pr(X \ge t) dt$ for any real valued random variable $X$ and any $K\ge 0$
and
observing that $\|f_{0}\|_{\mathcal{M}(w)}<\infty$ for $f_{0}\in B^{s}_{\infty,\infty}$,
we obtain that $\|\hat{f}_{\infty}\|_{\mathcal{M}(w)}$ has a finite expectation.

Next, the assumption that $\min_{0\le k \le 2^{l}-1} \kappa_{l,k}w_{l} / \sqrt{l}\uparrow \infty$
gives
\begin{align*}
\Pr&\left(\lim_{l\to\infty}\max_{0\le k \le 2^{k}-1}\frac{ | \langle (\hat{f}_{\infty} - f_{0}) , \psi_{l,k} \rangle | }{w_{l}} 
\neq 0\right)
=
\Pr\left(\lim_{l\to\infty}\max_{0\le k \le 2^{k}-1}\frac{ | \tilde{\varepsilon}_{l,k} | }{\kappa_{l,k} w_{l}}
\neq 0\right) \\
&\le 
\sum_{M \in \mathbb{Q} , M>0} \lim_{L\to\infty}
\sum_{l \ge L} \Pr \left( \max_{k=0,\ldots,2^{l}-1} \frac{|\tilde{\varepsilon}_{l,k}|}{\sqrt{l}} 
\ge \min_{0\le k \le 2^{l}-1} \frac{ \kappa_{l,k}w_{l}}{\sqrt{l}} M\right) \to 0,
\end{align*}
where the last convergence follows from Lemmas 4.3-4.4.
This shows that $\hat{f}_{\infty}$ converges almost surely in $\mathcal{M}_{0}(w)$.

\subsubsection*{Step 1: Upper bounds on $\Pr(\max_{(l,k)\in\mathcal{I}(J)}|\kappa_{l,k}^{-1}\tilde{Y}_{l,k}-\beta_{0,l,k}|/w_{l}\leq \hat{R}_{\alpha})$
and $\hat{R}_{\alpha}$}

We start with bounding $\Pr(\max_{(l,k)\in\mathcal{I}(J)}|\kappa_{l,k}^{-1}\tilde{Y}_{l,k}-\beta_{0,l,k}|/w_{l}\leq \hat{R}_{\alpha})$
and $\hat{R}_{\alpha}$.
From Theorem 2.1,
and by the same way as in the previous subsection,
there exist $\tilde{c}_{1},\tilde{c}_{2}>0$ depending only on 
$C_{1}$ in Condition 2.1 for which we have
\begin{align}
\left|\Pr\left(\max_{(l,k)\in\mathcal{I}(J)}\left|\frac{\kappa_{l,k}^{-1}\tilde{Y}_{l,k}-\beta_{0,l,k}}{w_{l}}\right|
\leq \hat{R}_{\alpha}\right) - (1-\alpha)\right|
\leq 
\phi_{\Pi_{\beta}}\Big{(}\tilde{c}_{1}\sqrt{2^{J}\log n}\Big{)} + \tilde{c}_{1}\mathrm{e}^{-\tilde{c}_{2}2^{J}\log n}.
\label{eq: bound on coverage errors in low frequencies in inverse problems}
\end{align}
Assume that the right hand side above is smaller than $\alpha/2$.
Then, from Theorem 2.1 and from (\ref{eq: near-orthogonality}),
there exist $\tilde{c}_{3}$ depending only on $\alpha$ and $\overline{b}$ in (\ref{eq: near-orthogonality}) for which we have
\begin{align}
\hat{R}_{\alpha} \leq \frac{\tilde{c}_{3}}{\underline{\kappa}_{J}\sqrt{n}}
\label{eq: bound on hat R in low frequencies in inverse problems}
\end{align}
with probability at least $1-\tilde{c}_{1}n^{-\tilde{c}_{2}2^{J}}$.

\subsubsection*{Step 2: Upper bound on $\Pr(\tilde{Y}_{\infty}\not\in\tilde{H}'_{3})$}
Next we bound $\Pr(\tilde{Y}_{\infty}\not\in\tilde{H}'_{3})$.
Theorem 2.1 gives the set $H$ satisfying the following:
\begin{enumerate}
\item[P'1] Assume that the right hand side in (\ref{eq: bound on coverage errors in low frequencies in inverse problems}) is smaller than $(1-\alpha)/2$.
Then, there exists $\tilde{c}_{4}>0$ depending only on $\alpha$ and $\underline{b}$ in (\ref{eq: near-orthogonality}) 
such that we have
\begin{align}
\tilde{c}_{4} \frac{J^{1/2}}{\overline{w}_{J}\overline{\kappa}_{J}n^{1/2}} \leq \hat{R}_{\alpha}
\label{eq: lower bound on hat R in low frequencies in inverse problems}
\end{align}
for $\tilde{Y}_{\infty} \in H$ and for $J \ge J_{\alpha}$ with $J_{\alpha}$ depending only on $\alpha$;
\item[P'2] We have $\Pr(\tilde{Y}_{\infty}\not\in H) \leq \tilde{c}_{1} n^{-\tilde{c}_{2}2^{J}}$.
\end{enumerate}
From the first property P'1, we have
\begin{align*}
\Pr(\tilde{Y}_{\infty}\not\in \tilde{H}'_{3}) 
&= \Pr(\tilde{Y}_{\infty}\not\in\tilde{H}'_{3},\tilde{Y}_{\infty}\in H) + \Pr(\tilde{Y}_{\infty}\not\in\tilde{H}'_{3},\tilde{Y}_{\infty}\not\in H)\\
&\leq 
\Pr\left(\sup_{ J \leq l <\infty, 0\leq k \leq 2^{l}-1} \left|\frac{\tilde{\varepsilon}_{l,k}}{\kappa_{l,k}w_{l}}\right| \geq 
\tilde{c}_{4}\frac{J^{1/2}}{n^{1/2}\overline{\kappa}_{J}\overline{w}_{J}} \right)
+ \Pr(\tilde{Y}_{\infty}\not\in H).
\end{align*}
The second property P'2 bounds $\Pr(\tilde{Y}_{\infty}\not\in H)$.
From Lemmas 4.3-4.4 together with the assumption that
$1\leq \{J/\overline{\kappa}_{J}^{2}\overline{w}_{J}^{2}\}u_{J}^{2}$,
there exist positive constants $\tilde{c}_{5},\tilde{c}_{6},\tilde{c}_{7}$ depending only on $\tilde{c}_{4}$ and $\overline{b}$
such that we have
\begin{align*}
\Pr\left(\sup_{ J \leq l <\infty, 0\leq k \leq 2^{l}-1} \left|\frac{\tilde{\varepsilon}_{l,k}}{\kappa_{l,k}w_{l}}\right| 
\geq \tilde{c}_{4}\frac{J^{1/2}}{n^{1/2}\overline{\kappa}_{J}\overline{w}_{J} }\right)
&\leq \sum_{J \leq l < \infty} \Pr\left(\max_{0\leq k \leq 2^{l}-1} \left|\frac{\tilde{\varepsilon}_{l,k}}{\kappa_{l,k}w_{l}}\right| 
\geq \tilde{c}_{4}\frac{J^{1/2}}{n^{1/2}\overline{\kappa}_{J}\overline{w}_{J} } \right) \\
&\leq 
\sum_{J \leq l < \infty} \Pr\left( \underline{b} \max_{0\leq k \leq 2^{l}-1} \left|\frac{N_{l,k}}{\kappa_{l,k}w_{l}}\right| 
\geq \tilde{c}_{4}\frac{J^{1/2}}{\overline{\kappa}_{J}\overline{w}_{J} } \right) \\
&\leq \sum_{J \leq l < \infty} \Pr\left(\max_{0\leq k \leq 2^{l}-1} |N_{l,k}| \geq 
\tilde{c}_{5}\frac{J^{1/2}u_{J}}{\overline{\kappa}_{J}\overline{w}_{J}} \sqrt{l} \right) \\
&\leq \tilde{c}_{6}\exp[-\tilde{c}_{7}J\{J/(\overline{\kappa}_{J}^{2}\overline{w}_{J}^{2})\}u_{J}^{2}]
\end{align*}
for sufficiently large $J$ satisfying 
$\{J^{1/2}u_{J}/(\overline{\kappa}_{J}\overline{w}_{J}) - \sqrt{2} (\overline{b}/\tilde{c}_{4}) \}^{2} 
\geq (1/2) J u_{J}^{2}/(\overline{\kappa}_{J}\overline{w}_{J})^{2}$,
which completes Step 2.

\subsubsection*{Step 3: Upper bound on the $L^{\infty}$-diameter}
We finally provide a high-probability upper bound on the $L^{\infty}$-diameter.
Fix $f,g\in \mathcal{C}_{w}^{B}(\hat{f}_{\infty},\hat{R}_{\alpha})$ and let $h:=f-g$.
From the property of a wavelet basis,
there exists $\tilde{c}_{8}>0$ depending only on $\{\psi_{l,k}:(l,k)\in\mathcal{I}_{\infty}\}$
for which we have
\[
\|h\|_{\infty} \leq 
\tilde{c}_{8}\sum_{J_{0}-1\leq l<\infty}2^{l/2}\max_{0\leq k\leq 2^{l}-1}|\langle h ,\psi_{l,k} \rangle|
= \tilde{c}_{8} ( A_{1} + A_{2} ),
\]
where 
\[
A_{1} := \sum_{J_{0}-1 \leq l \leq J-1} 2^{l/2}\max_{0\leq k \leq 2^{l}-1} |\langle h, \psi_{l,k} \rangle|
\text{ and }
A_{2} := \sum_{J \leq l < \infty } 2^{l/2} \max_{ 0 \leq k \leq 2^{l}-1} |\langle h, \psi_{l,k} \rangle|.
\]
Inequality (\ref{eq: bound on hat R in low frequencies in inverse problems}) gives
\begin{align*}
A_{1} \leq \max_{J_{0}-1 \leq l \leq J-1}\left\{\frac{w_{l}}{\sqrt{l}}\right\} \sum_{J_{0}-1 \leq l \leq J-1}2^{l/2}\sqrt{l} 2\hat{R}_{\alpha}
\leq \tilde{c}_{9} v_{J} \sqrt{\frac{2^{J}J}{\underline{\kappa}_{J}^{2}n}}
\end{align*}
with some $\tilde{c}_{9}>0$ depending only on $\tilde{c}_{3}$ appearing in (\ref{eq: bound on hat R in low frequencies in inverse problems}).
Since $\max\{\|f\|_{B^{s}_{\infty,\infty}}, \|g\|_{B^{s}_{\infty,\infty}}\} \leq B$,
we have
\begin{align*}
A_{2} \leq \sum_{J\leq l < \infty} 2^{-ls}\max_{0\leq k \leq 2^{l}-1}2^{l(s+1/2)}|\langle h, \psi_{l,k}\rangle|
      \leq 2^{-Js+2}B,
\end{align*}
which completes Step 3 and thus completes the proof.
\qed

\begin{remark}[The choice of $J$ in the second part of Proposition 3.3]
For ``sufficiently large $J$" appearing in the second part of Proposition 3.3,
we can take $J$ satisfying $J\ge J_{\alpha}$ and
\[
\{J^{1/2}u_{J}/(\overline{\kappa}_{J}\overline{w}_{J}) - \sqrt{2} (\overline{b}/\tilde{c}_{4}) \}^{2} 
\geq (1/2) J u_{J}^{2}/(\overline{\kappa}_{J}\overline{w}_{J})^{2},
\]
where $J_{\alpha}$ and $\tilde{c}_{4}=\tilde{c}_{4}(\alpha)$ are the constants in the property P'1.
\end{remark}

\subsection{Proof for Section 3.3}

We first transform the nonparametric regression model
into the following approximately regression model via $p$ basis functions $\{ \psi^{p}_{j} : 1 \leq j \leq p \}$:
\[
Y = X \beta_{0} + r + \varepsilon,
\]
where 
$Y = (Y_{1} , \ldots , Y_{n})^{\top}$,
$X = (X_{1} , \ldots , X_{n})^{\top}$ 
with  $X_{i}$ whose $j (\in \{1, \ldots, p\})$-th component is $\psi^{p}_{j}(T_{i})$,
and
$r = (r_{1} , \ldots , r_{n})^{\top}$ 
with $r_{i} = f_{0}(T_{i}) - \psi^{p}(T_{i})^{\top}\beta_{0}$.
Recall that $\beta_{0} \in \mathrm{argmin}~\Ep[(f_{0}(T_{1}) - \sum_{j=1}^{p}\psi^{p}_{j}(T_{1})\beta_{j})^{2}]$.

\subsubsection{Supporting lemmas}

We begin with stating five supporting lemmas used in the proof.
Let 
$N_{(n)}$ be a random $n$-vector from $\mathcal{N}( 0 , \sigma_{0}^{2} I_{n} )$,
and
$N_{(p)}$ be a random $p$-vector from $\mathcal{N}( 0, \sigma_{0}^{2} I_{p} )$.
Let $B=(B_{ij}):=(\Ep \psi^{p}_{i}(T_{1})\psi^{p}_{j}(T_{1}))$
and
recall $\tilde{\psi}^{p}(\cdot) := \psi^{p}(\cdot) / \|\psi^{p}(\cdot)\|$ and $\xi_{p} := \|\| \psi^{p}(\cdot) \|\|_{\infty}$.

\begin{lemma}[Matrix Chernoff inequality; \cite{Tropp(2012)}]
\label{lemma: Matrix Chernoff}
Let $\{A_{i} : i=1,\ldots, n \}$ be an i.i.d.~sequence of positive semi-definite and self-adjoint $p\times p$ matrices of which the maximum eigenvalues are almost surely bounded by $R$.
Then, we have
\begin{align*}
	\Pr &\Big{\{} \lambda_{\mathrm{min}} \Big{(} \sum A_{i} / n \Big{)}  \leq (1-\delta) \lambda_{\mathrm{min}}(\Ep[A_{1}])  \Big{\}} 
	\leq  
	p \{\mathrm{e}^{-\delta} / (1-\delta)^{1-\delta} \}^{ n \lambda_{\mathrm{min}}(\Ep [A_{1}]) / R }
	\nonumber\\
	\Pr &\Big{\{} \lambda_{\mathrm{max}} \Big{(} \sum A_{i} / n \Big{)}  \leq (1-\delta) \lambda_{\mathrm{max}}(\Ep[A_{1}]) \Big{\}} 
	\leq  
	p \{\mathrm{e}^{\delta} / (1+\delta)^{1-\delta} \}^{ n \lambda_{\mathrm{min}}(\Ep [A_{1}]) / R }
\end{align*}
for any $\delta\in (0,1]$, where $\lambda_{\mathrm{{min}}}(\cdot)$ and $\lambda_{\mathrm{max}}(\cdot)$ are the maximum and the minimum eigenvalues.
\end{lemma}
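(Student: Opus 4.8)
Since Lemma~\ref{lemma: Matrix Chernoff} is quoted verbatim from \cite{Tropp(2012)}, the simplest route is to cite that paper; if a self-contained argument were wanted, the plan is to run the matrix Laplace transform (Ahlswede--Winter) method. First I would abbreviate $S := \sum_{i=1}^{n} A_{i}/n$, $\mu_{\min} := \lambda_{\min}(\Ep[A_{1}])$, $\mu_{\max} := \lambda_{\max}(\Ep[A_{1}])$, noting $\Ep[nS] = n\Ep[A_{1}]$ by the i.i.d.\ assumption, and treat the lower tail of $\lambda_{\min}(S)$ and the upper tail of $\lambda_{\max}(S)$ as two mirror-image cases. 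For the lower tail: for every $\theta>0$, combining the operator inequality $\mathrm{e}^{-\theta\lambda_{\min}(nS)} = \lambda_{\max}(\mathrm{e}^{-\theta nS}) \le \mathrm{tr}\,\mathrm{e}^{-\theta nS}$ with Markov's inequality gives
\[
\Pr\bigl(\lambda_{\min}(S) \le (1-\delta)\mu_{\min}\bigr) \le \mathrm{e}^{\theta(1-\delta)n\mu_{\min}}\,\Ep[\mathrm{tr}\,\mathrm{e}^{-\theta nS}],
\]
so the task reduces to bounding the matrix moment generating function on the right.

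For that I would use subadditivity of matrix cumulant generating functions (a consequence of Lieb's concavity theorem): $\Ep[\mathrm{tr}\,\mathrm{e}^{\sum_{i}H_{i}}] \le \mathrm{tr}\,\exp\bigl(\sum_{i}\log\Ep[\mathrm{e}^{H_{i}}]\bigr)$ for independent self-adjoint $H_{i}$, applied with $H_{i} = -\theta A_{i}$. Each summand is then controlled by a scalar-to-operator transfer: since $0 \preceq A_{i} \preceq R\,I_{p}$ almost surely and $x\mapsto\mathrm{e}^{-\theta x}$ is convex, on $[0,R]$ it lies below its chord, $\mathrm{e}^{-\theta x} \le 1+(\mathrm{e}^{-\theta R}-1)x/R$, which transfers to $\mathrm{e}^{-\theta A_{i}} \preceq I_{p} + \tfrac{\mathrm{e}^{-\theta R}-1}{R}A_{i}$ and hence, via $\log(I+M)\preceq M$, to $\log\Ep[\mathrm{e}^{-\theta A_{i}}] \preceq \tfrac{\mathrm{e}^{-\theta R}-1}{R}\Ep[A_{i}]$. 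Summing over $i$ and using $\mathrm{e}^{-\theta R}-1<0$, the largest eigenvalue of $\sum_{i}\log\Ep[\mathrm{e}^{-\theta A_{i}}]$ is at most $\tfrac{\mathrm{e}^{-\theta R}-1}{R}n\mu_{\min}$, so $\mathrm{tr}\,\mathrm{e}^{M} \le p\,\mathrm{e}^{\lambda_{\max}(M)}$ yields $\Ep[\mathrm{tr}\,\mathrm{e}^{-\theta nS}] \le p\exp\bigl(\tfrac{\mathrm{e}^{-\theta R}-1}{R}n\mu_{\min}\bigr)$. Finally I would minimize the composite bound over $\theta>0$; the choice $\theta = -R^{-1}\log(1-\delta)$ (positive for $\delta\in(0,1]$) collapses it to $p\,[\mathrm{e}^{-\delta}/(1-\delta)^{1-\delta}]^{n\mu_{\min}/R}$, which is the claimed lower-tail bound.

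The upper-tail statement is proved the same way, starting from $\mathrm{e}^{\theta\lambda_{\max}(nS)} \le \mathrm{tr}\,\mathrm{e}^{\theta nS}$ and using the chord bound $\mathrm{e}^{\theta x} \le 1+(\mathrm{e}^{\theta R}-1)x/R$ on $[0,R]$; now $\mathrm{e}^{\theta R}-1>0$, so the relevant eigenvalue involves $\mu_{\max}$, and optimizing with $\mathrm{e}^{\theta R} = 1+\delta$ produces the bound with $[\mathrm{e}^{\delta}/(1+\delta)^{1+\delta}]^{n\mu_{\max}/R}$. The only genuinely nontrivial ingredient in this plan is the matrix cumulant subadditivity, i.e.\ Lieb's concavity theorem; the rest is the scalar chord inequality, monotonicity of $\log$ on the positive cone, and a one-variable optimization. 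Accordingly, in the paper we simply invoke \cite{Tropp(2012)}.
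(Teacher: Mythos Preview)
Your proposal is correct and matches the paper's approach: the paper provides no proof at all for this lemma, simply stating it with the citation to \cite{Tropp(2012)}, which is exactly what you recommend. Your additional sketch of the matrix Laplace transform argument is accurate and more than the paper offers; note in passing that the paper's stated upper-tail inequality contains apparent typos (the event should read $\lambda_{\max}(\sum A_i/n) \geq (1+\delta)\lambda_{\max}(\Ep[A_1])$, the exponent should involve $\mu_{\max}$, and the base should be $\mathrm{e}^{\delta}/(1+\delta)^{1+\delta}$), and your sketch in fact derives the correct form.
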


\begin{lemma}[Lemma 4.2 in \cite{BelloniChernozhukovChetverikovKato(2015)}]
\label{lemma: Uniform linearization}
Under Conditions 3.3-3.4 and 2.5, we have
\begin{align*}
	\left\| \tilde{\psi}^{p}(\cdot)^{\top} \sqrt{n}(\hat{\beta} -\beta_{0})
	- \tilde{\psi}^{p}(\cdot)^{\top} B^{-1} X^{\top}\varepsilon / \sqrt{n} \right\|_{\infty} 
	\leq R_{1} +R_{2},
\end{align*}
where $R_{1}$ and $R_{2}$ are random variables such that
there exist positive constants $\tilde{c}_{1}$ and $\tilde{c}_{2}$
depending only on $q$ appearing in Condition 2.5 (a) for which
we have
\begin{align*}
	R_{1} &\leq
	\begin{cases} 
		\tilde{c}_{1}\eta^{2} \sqrt{\{\xi^{2}_{p} \log p \}/ n} (n^{1/q}\sqrt{\log p}+ \sqrt{p} \tau_{\infty})
		& \text{under Condition 2.5 (a)}, \\
		\tilde{c}_{1}\eta^{2} \sqrt{\{\xi^{2}_{p} \log p \}/ n} (\sqrt{\log n} \sqrt{\log p}+ \sqrt{p} \tau_{\infty})
		& \text{under Condition 2.5 (b)},
	\end{cases} \\
	R_{2} &\leq \tilde{c_{2}} \eta \sqrt{\log p} \tau_{\infty}
\end{align*}
with probability at least $1-\tilde{c}_{2}/\eta$ with any $\eta>1$.
\end{lemma}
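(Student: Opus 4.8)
The plan is to follow the proof of Lemma 4.2 in \cite{BelloniChernozhukovChetverikovKato(2015)}, reorganized so as to make the dependence on the level $\eta$ explicit. Write $\hat{E}_{n} := n^{-1}X^{\top}X = n^{-1}\sum_{i=1}^{n}\psi^{p}(T_{i})\psi^{p}(T_{i})^{\top}$, so that $\Ep[\hat{E}_{n}]=B$, and recall that $r_{i}=f_{0}(T_{i})-\psi^{p}(T_{i})^{\top}\beta_{0}$ satisfies $\Ep[\psi^{p}(T_{1})r_{1}]=0$ since $\beta_{0}$ is the population least-squares coefficient. Since $\hat{\beta}=\beta_{0}+(X^{\top}X)^{-1}X^{\top}(r+\varepsilon)$, on the event that $\hat{E}_{n}$ is invertible we have $\sqrt{n}(\hat{\beta}-\beta_{0})=\hat{E}_{n}^{-1}(n^{-1/2}X^{\top}\varepsilon)+\hat{E}_{n}^{-1}(n^{-1/2}X^{\top}r)$, and by $\hat{E}_{n}^{-1}-B^{-1}=\hat{E}_{n}^{-1}(B-\hat{E}_{n})B^{-1}$ the quantity inside $\|\cdot\|_{\infty}$ in the lemma equals
\[
\tilde{\psi}^{p}(\cdot)^{\top}\hat{E}_{n}^{-1}(B-\hat{E}_{n})B^{-1}n^{-1/2}X^{\top}\varepsilon \;+\; \tilde{\psi}^{p}(\cdot)^{\top}\hat{E}_{n}^{-1}n^{-1/2}X^{\top}r.
\]
I would take $R_{1}$ to be the supremum over $x\in[0,1]$ of the first summand together with the part of the second summand carried by $\hat{E}_{n}^{-1}-B^{-1}$, and $R_{2}:=\sup_{x\in[0,1]}|\tilde{\psi}^{p}(x)^{\top}B^{-1}n^{-1/2}X^{\top}r|$, so that the displayed expression is dominated in $\|\cdot\|_{\infty}$ by a constant times $R_{1}+R_{2}$.

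First I would install the good event for the Gram matrix: applying the matrix Chernoff inequality (Lemma~\ref{lemma: Matrix Chernoff}) to $A_{i}=\psi^{p}(T_{i})\psi^{p}(T_{i})^{\top}$, whose largest eigenvalue is at most $\xi_{p}^{2}$, together with Condition~\ref{Condition: basis eigenvalues}, one gets that, in the regime where $\xi_{p}^{2}(\log p)/n$ is bounded, with probability at least $1-p^{-c}$ the matrix $\hat{E}_{n}$ is invertible, $\|\hat{E}_{n}^{-1}\|_{\op}\lesssim\underline{b}^{-2}$, and $\|\hat{E}_{n}-B\|_{\op}\lesssim\sqrt{\xi_{p}^{2}(\log p)/n}$. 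On this event, using $\|\tilde{\psi}^{p}(x)\|=1$ and $\|B^{-1}\|_{\op}\le\underline{b}^{-2}$, each block of $R_{1}$ is at most a constant times $\|\hat{E}_{n}-B\|_{\op}$ multiplied by an appropriate norm of $n^{-1/2}X^{\top}\varepsilon$ (the noise block) or of $n^{-1/2}X^{\top}r$ (the residual bias block).

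Next I would control the two bias-type quantities. Since $\Ep[\|n^{-1/2}X^{\top}r\|^{2}]=\Ep[\|\psi^{p}(T_{1})\|^{2}r_{1}^{2}]\le\tau_{\infty}^{2}\,\mathrm{tr}(B)\lesssim p\,\tau_{\infty}^{2}$, Markov's inequality gives $\|n^{-1/2}X^{\top}r\|\lesssim\eta^{1/2}\sqrt{p}\,\tau_{\infty}$ with probability $\ge1-c/\eta$; multiplying by $\|\hat{E}_{n}-B\|_{\op}$ produces the $\sqrt{p}\,\tau_{\infty}$ term of $R_{1}$. For $R_{2}$, because $\Ep[\psi^{p}(T_{1})r_{1}]=0$ the quantity $\tilde{\psi}^{p}(x)^{\top}B^{-1}n^{-1/2}X^{\top}r$ is a centered empirical average with second moment at most $\tau_{\infty}^{2}\,\tilde{\psi}^{p}(x)^{\top}B^{-1}\tilde{\psi}^{p}(x)\le\tau_{\infty}^{2}/\underline{b}^{2}$, so Chebyshev gives $|\cdot|\lesssim\eta^{1/2}\tau_{\infty}$ at a fixed $x$; a union bound over a grid of cardinality $\mathrm{poly}(p)$ — which suffices because the modulus of continuity of $\tilde{\psi}^{p}$ is polynomial in $p$ by Condition~\ref{Condition: basis conditions}, making the discretization error negligible — yields the extra $\sqrt{\log p}$ and hence the stated bound on $R_{2}$.

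The step I expect to be the main obstacle is the noise block of $R_{1}$: controlling, simultaneously uniformly in $x\in[0,1]$ and sharply enough that the union bound over the net costs only logarithmic factors rather than a power of $p$, the conditionally centered linear form $v(x)^{\top}n^{-1/2}X^{\top}\varepsilon$ with $v(x)=B^{-1}(B-\hat{E}_{n})\hat{E}_{n}^{-1}\tilde{\psi}^{p}(x)$. Here one conditions on $\{T_{i}\}$: under Condition~\ref{Condition: moment condition}(a) the Rosenthal / von~Bahr--Esseen moment inequalities already used in the proof of Proposition~\ref{proposition: Condition in the case of empirical Bayes}, combined with the union bound over the same $\mathrm{poly}(p)$ grid, give the factor $n^{1/q}\sqrt{\log p}$, whereas under Condition~\ref{Condition: moment condition}(b) a sub-Gaussian tail bound gives $\sqrt{\log n}\,\sqrt{\log p}$ instead; multiplying by $\|\hat{E}_{n}-B\|_{\op}$ gives the noise term of $R_{1}$. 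This is precisely where the dichotomy between Conditions~\ref{Condition: moment condition}(a) and (b) originates. Collecting the three pieces, and observing that $R_{1}$ is built from a product of two ``probability $\ge1-c/\eta$'' events while $R_{2}$ uses only one, yields the claimed bounds — with the $\eta^{2}$ in $R_{1}$ arising from the product — on an event of probability at least $1-\tilde{c}_{2}/\eta$; for the detailed moment bookkeeping I would defer to the proof of Lemma 4.2 in \cite{BelloniChernozhukovChetverikovKato(2015)}.
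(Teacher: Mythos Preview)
The paper does not actually prove this lemma: it is quoted from \cite{BelloniChernozhukovChetverikovKato(2015)}, and the only addition is the one-line remark that case~(b) follows from the same argument once one replaces the bound $\Ep[\max_{i}|\varepsilon_{i}|]\lesssim n^{1/q}$ (valid under Condition~2.5(a)) by $\Ep[\max_{i}|\varepsilon_{i}|]\lesssim\sqrt{\log n}$ (valid under Condition~2.5(b)). So there is no detailed proof to compare against, and your decomposition $\hat{E}_{n}^{-1}-B^{-1}=\hat{E}_{n}^{-1}(B-\hat{E}_{n})B^{-1}$ together with the matrix Chernoff control of $\|\hat{E}_{n}-B\|_{\op}$ is indeed the skeleton of the argument in \cite{BelloniChernozhukovChetverikovKato(2015)}.

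That said, the specific tail mechanism you propose has a genuine gap. For $R_{2}$ you invoke Chebyshev at a fixed $x$ and then a union bound over a $\mathrm{poly}(p)$ grid, claiming this ``yields the extra $\sqrt{\log p}$''. It does not: Chebyshev gives only polynomial tails, so a union bound over $p^{C}$ points inflates the threshold by $p^{C/2}$, not by $\sqrt{\log p}$. The same objection applies to your treatment of the noise block of $R_{1}$ under Condition~2.5(a): Rosenthal-type $q$-th moment bounds plus a union bound over $p^{C}$ points give a $p^{C/q}$ loss, not $n^{1/q}\sqrt{\log p}$. The way \cite{BelloniChernozhukovChetverikovKato(2015)} actually obtain these factors is to bound the \emph{expectation} of the supremum via empirical-process maximal/multiplier inequalities (symmetrization, the multiplier inequality producing the factor $\Ep[\max_{i}|\varepsilon_{i}|]$, and an entropy bound using Condition~3.4 to get the $\sqrt{\log p}$), and only then apply Markov's inequality to pass to a high-probability statement with cost $1/\eta$. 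This is also exactly why the paper's remark singles out $\Ep[\max_{i}|\varepsilon_{i}|]$ as the sole place where cases~(a) and~(b) differ. Your final deferral to \cite{BelloniChernozhukovChetverikovKato(2015)} is therefore appropriate, but the intermediate ``Chebyshev/Rosenthal + union bound'' heuristic should be replaced by ``expectation bound via multiplier/maximal inequality + Markov''.
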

\begin{remark}
Belloni et al.~\cite{BelloniChernozhukovChetverikovKato(2015)} provides the proof under Condition 2.5 (a).
Observing $\Ep [\max_{i=1,\ldots, n} |\varepsilon_{i}| ]\leq Kn^{1/q}$ with some universal constant $K$,
we can prove the case under Condition 2.5 (b).
\end{remark}

\begin{lemma}[Corollary 2.2 and Proposition 3.3 in \cite{CCK(2014a)}]
\label{lemma: Gaussian approximation of linear statistics}
Under Conditions 3.3-3.4,
for any $\eta>0$,
there exists a random variable $\tilde{Z}\overset{d}{=}\|\tilde{\psi}^{p}(\cdot)^{\top}B^{-1}N_{(p)}\|_{\infty}$ 
such that
the inequality
\begin{align*}
	&\left| \left\| \tilde{\psi}^{p}(\cdot)^{\top} B^{-1}X^{\top}\varepsilon / \sqrt{n} \right\|_{\infty}
	- \tilde{Z} \right|
	\\
	& \leq
	\begin{cases}
	\tilde{c}_{1} \frac{n^{1/q} \log n}{\eta^{1/2}}
	\frac{ \xi_{p} }{n^{1/2}}
	+
	\frac{(\log n)^{3/4}}{\eta^{1/2}}
	\frac{ \xi_{p}^{1/2} }{n^{1/4} }
	+
	\frac{(\log n)^{2/3}}{\eta^{1/3}}
	\frac{\xi_{p}^{1/3}}{n^{1/6}}
		& \text{under Condition 2.5 (a)}, \\
	\tilde{c}_{1} \frac{\log n}{\eta^{1/2}}
	\frac{\xi_{p}}{n^{1/2}}
	+
	\frac{(\log n)^{3/4}}{\eta^{1/2}}
	\frac{ \xi_{p}^{1/2} }{n^{1/4} }
	+
	\frac{(\log n)^{2/3}}{\eta^{1/3}}
	\frac{\xi_{p}^{1/3}}{n^{1/6}}
		& \text{under Condition 2.5 (b)} 
	\end{cases}
\end{align*}
holds with probability at least $1 - \tilde{c}_{2} \{ \eta + (\log n) / n \}$
for some $\tilde{c}_{1},\tilde{c}_{2}>0$ not depending on $n$ and $p$.
\end{lemma}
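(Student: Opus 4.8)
The plan is to read the statement off the general Gaussian approximation theory for suprema of empirical processes in \cite{CCK(2014a)}. First I would recast the quantity of interest as the supremum of a centered empirical process indexed by $t\in[0,1]$. Set $f_{t}(s,e):=\tilde{\psi}^{p}(t)^{\top}B^{-1}\psi^{p}(s)\,e$ and let $\mathbb{G}_{n}$ be the empirical process built from the i.i.d.\ pairs $(T_{i},\varepsilon_{i})$. Since $\varepsilon$ is independent of $\{T_{i}\}$ and $\Ep[\varepsilon_{1}]=0$, each $f_{t}$ has mean zero, and
\[
\tilde{\psi}^{p}(t)^{\top}B^{-1}X^{\top}\varepsilon/\sqrt{n}=\mathbb{G}_{n}f_{t},\qquad
\| \tilde{\psi}^{p}(\cdot)^{\top}B^{-1}X^{\top}\varepsilon/\sqrt{n}\|_{\infty}=\sup_{t\in[0,1]}|\mathbb{G}_{n}f_{t}|,
\]
so it is enough to apply the empirical-process central limit theorem to the class $\mathcal{F}:=\{f_{t}:t\in[0,1]\}$ and then identify the limiting Gaussian object.

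Second, I would check the hypotheses of Corollary 2.2 of \cite{CCK(2014a)} for $\mathcal{F}$. For the envelope: $\|\tilde{\psi}^{p}(s)\|=1$, $\|B^{-1}\|_{\mathrm{op}}\le\underline{b}^{-2}$ by Condition \ref{Condition: basis eigenvalues}, and $\|\psi^{p}(s)\|\le\xi_{p}$, so one may take $F(s,e)=\underline{b}^{-2}\xi_{p}|e|$, whose $L^{q}(P)$-norm is $\lesssim\xi_{p}$ under Condition \ref{Condition: moment condition}(a) and which is sub-exponential with Orlicz norm $\lesssim\xi_{p}$ under Condition \ref{Condition: moment condition}(b). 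For the variance: $\Ep[\psi^{p}(T_{1})\psi^{p}(T_{1})^{\top}]=B$ gives $\Ep[f_{t}^{2}]=\sigma_{0}^{2}\tilde{\psi}^{p}(t)^{\top}B^{-1}\tilde{\psi}^{p}(t)$, which by Condition \ref{Condition: basis eigenvalues} lies in $[\sigma_{0}^{2}\overline{b}^{-2},\sigma_{0}^{2}\underline{b}^{-2}]$, i.e.\ is bounded above and below by constants. For the entropy: the second half of Condition \ref{Condition: basis conditions} makes $t\mapsto\tilde{\psi}^{p}(t)$ Lipschitz with constant at most $p^{C_{5}}$, hence $t\mapsto f_{t}/F$ Lipschitz with constant polynomial in $p$; covering $[0,1]$ by a uniform net then yields a VC-type uniform-entropy bound $\sup_{Q}N(\mathcal{F},L^{2}(Q),\epsilon\|F\|_{L^{2}(Q)})\le(A/\epsilon)^{v}$ with $v$ universal and $\log A\lesssim\log p$. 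Since $\log\xi_{p}\le C_{4}\log p$ and $p$ is at most polynomial in $n$, every logarithm of a parameter is $\lesssim\log n$.

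Third, I would invoke Corollary 2.2 of \cite{CCK(2014a)}, which produces, for each $\eta>0$, a random variable equal in law to $\sup_{t}|W_{t}|$, where $W$ is the centered Gaussian process with $\mathrm{Cov}(W_{t},W_{t'})=\Ep[f_{t}f_{t'}]$, such that on an event of probability at least $1-\tilde{c}_{2}\{\eta+(\log n)/n\}$ the difference between $\sup_{t}|\mathbb{G}_{n}f_{t}|$ and $\sup_{t}|W_{t}|$ is, in absolute value, at most the sum of the three displayed terms: the ``$\|F\|_{P,q}$/Orlicz'' term gives the $n^{1/q}\log n$ (resp.\ $\log n$) numerator together with the factor $\xi_{p}/n^{1/2}$, the ``$\sigma$'' term gives $(\log n)^{3/4}\xi_{p}^{1/2}/n^{1/4}$, and the entropy term gives $(\log n)^{2/3}\xi_{p}^{1/3}/n^{1/6}$. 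Finally, $W_{t}=\tilde{\psi}^{p}(t)^{\top}B^{-1}Z$ is a linear image of a single finite-dimensional Gaussian vector, so Proposition 3.3 of \cite{CCK(2014a)} identifies $\sup_{t}|W_{t}|$ in law with $\|\tilde{\psi}^{p}(\cdot)^{\top}B^{-1}N_{(p)}\|_{\infty}$, which is the asserted $\tilde{Z}$.

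The main obstacle is the bookkeeping in the last two steps: one must confirm that the Lipschitz bound from Condition \ref{Condition: basis conditions} produces a uniform-entropy envelope parameter that is only polynomial in $p$ (so that the final rate carries only logarithmic powers of $n$ and the single envelope factor $\xi_{p}$, with no genuine power of $p$), and then match the generic three-parameter rate of Corollary 2.2 of \cite{CCK(2014a)}---stated in terms of $\|F\|_{P,q}$, the variance proxy, and the entropy exponents---to the exact three-term expression claimed here. The sub-exponential case additionally uses the exponential-moment version of that corollary together with the elementary bound $\Ep[\max_{1\le i\le n}|\varepsilon_{i}|]\lesssim\log n$ in place of the $n^{1/q}$ factor, exactly as in the remark following Lemma \ref{lemma: Uniform linearization}.
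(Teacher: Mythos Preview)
Your proposal is essentially correct and matches the paper's approach: the paper states this lemma purely as a citation of Corollary~2.2 and Proposition~3.3 in \cite{CCK(2014a)} and gives no proof at all, so your outline of how to verify the hypotheses of those results (envelope $F\lesssim\xi_{p}|e|$, variance bounded above and below via Condition~\ref{Condition: basis eigenvalues}, VC-type entropy from the Lipschitz bound in Condition~\ref{Condition: basis conditions}) is exactly the intended route and in fact supplies more detail than the paper does.

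One small point to tighten in your last step: the covariance of the limiting Gaussian process is
\[
\Ep[f_{t}f_{t'}]=\sigma_{0}^{2}\,\tilde{\psi}^{p}(t)^{\top}B^{-1}\tilde{\psi}^{p}(t'),
\]
which is the covariance of $t\mapsto\tilde{\psi}^{p}(t)^{\top}B^{-1/2}N_{(p)}$, not of $\tilde{\psi}^{p}(t)^{\top}B^{-1}N_{(p)}$ (the latter would have covariance $\sigma_{0}^{2}\tilde{\psi}^{p}(t)^{\top}B^{-2}\tilde{\psi}^{p}(t')$). The paper's own statement appears to carry a typo here; in the subsequent Lemmas~\ref{lemma: Expectation of suprema}--\ref{lemma: Anti-concentration of Gaussian process suprema} and in the proof of Proposition~3.4 the authors consistently work with $\|\tilde{\psi}^{p}(\cdot)^{\top}B^{-1/2}N_{(p)}\|_{\infty}$. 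Your identification ``$W_{t}=\tilde{\psi}^{p}(t)^{\top}B^{-1}Z$'' is fine provided $Z\sim\mathcal{N}(0,\sigma_{0}^{2}B)$, i.e.\ $Z=B^{1/2}N_{(p)}$, which gives precisely $\tilde{Z}\overset{d}{=}\|\tilde{\psi}^{p}(\cdot)^{\top}B^{-1/2}N_{(p)}\|_{\infty}$.
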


\begin{lemma}
	\label{lemma: Expectation of suprema}
	Under Condition 3.4,
	we have
	$\Ep \|\tilde{\psi}^{p}(\cdot)^{\top}B^{-1}N_{(p)}\|_{\infty} \leq \tilde{c}_{1}\sqrt{\log p}$
	for some positive constant $\tilde{c}_{1}$ depending only on $C_{5}$ appearing in Condition 3.4.
\end{lemma}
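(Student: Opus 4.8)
The plan is to recognize $G(\cdot):=\tilde\psi^p(\cdot)^\top B^{-1}N_{(p)}$ as a centered Gaussian process on $[0,1]$ and to bound $\Ep\|G\|_\infty$ by a discretization (one-step chaining) argument, using a uniform variance bound on $G$ together with the Lipschitz regularity of $t\mapsto\tilde\psi^p(t)$ supplied by Condition \ref{Condition: basis conditions}.

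First I would record the two elementary consequences of Conditions \ref{Condition: basis eigenvalues} and \ref{Condition: basis conditions}. Since the eigenvalues of $B=(\Ep\psi^p_i(T_1)\psi^p_j(T_1))_{i,j}$ lie in $[\underline b^2,\overline b^2]$, we have $\|B^{-1}\|_{\op}\le\underline b^{-2}$; because $\|\tilde\psi^p(t)\|=1$ for every $t$, this yields the uniform variance bound $\Var\{G(t)\}=\sigma_0^2\,\tilde\psi^p(t)^\top B^{-2}\tilde\psi^p(t)\le\sigma_0^2\underline b^{-4}$ and also $\Ep\|B^{-1}N_{(p)}\|\le(\sigma_0^2\operatorname{tr} B^{-2})^{1/2}\le\sigma_0\underline b^{-2}\sqrt p$. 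From the second inequality in Condition \ref{Condition: basis conditions}, the map $t\mapsto\tilde\psi^p(t)$ is Lipschitz on $[0,1]$ with constant at most $\mathrm e^{C_5\log p}=p^{C_5}$.

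Next I would fix the integer mesh $M:=\lceil p^{C_5+1}\rceil$ and the grid $t_j:=j/M$, $0\le j\le M$. By a standard maximal inequality for (possibly dependent) centered Gaussian variables, $\Ep\max_{0\le j\le M}|G(t_j)|\le\sqrt{2\log(2(M+1))}\,\max_j(\Var G(t_j))^{1/2}\le\sigma_0\underline b^{-2}\sqrt{2\log(2(M+1))}\le\tilde c\sqrt{\log p}$, since $\log M\asymp\log p$. For the oscillation term, Cauchy--Schwarz and the Lipschitz bound give, for $t\in[t_j,t_{j+1}]$, $|G(t)-G(t_j)|\le\|\tilde\psi^p(t)-\tilde\psi^p(t_j)\|\,\|B^{-1}N_{(p)}\|\le p^{C_5}M^{-1}\|B^{-1}N_{(p)}\|$, hence $\Ep\max_{0\le j<M}\sup_{t\in[t_j,t_{j+1}]}|G(t)-G(t_j)|\le p^{C_5}M^{-1}\sigma_0\underline b^{-2}\sqrt p\le\sigma_0\underline b^{-2}p^{-1/2}$. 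Combining via $\|G\|_\infty\le\max_{0\le j\le M}|G(t_j)|+\max_{0\le j<M}\sup_{t\in[t_j,t_{j+1}]}|G(t)-G(t_j)|$ and taking expectations yields $\Ep\|\tilde\psi^p(\cdot)^\top B^{-1}N_{(p)}\|_\infty\le\tilde c_1\sqrt{\log p}$.

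The only mildly delicate point is the choice of mesh size: the crude bound $\Ep\|B^{-1}N_{(p)}\|\lesssim\sqrt p$ is all that is available without extra work, so the grid must be taken of order $p^{-C_5-1/2}$ or finer to kill the oscillation term; taking it polynomially fine in $p$ costs nothing in the leading factor because $\log M\asymp\log p$. Strictly $\tilde c_1$ depends on $C_5$ and on $\underline b,\sigma_0$, which in the present setting are treated as absolute constants (recall $\sigma_0$ is assumed independent of $n$ in Section \ref{section: credible bands in nonparametric regression model}). Equivalently one could run Dudley's entropy integral against the canonical metric $d(t,t')=\sigma_0\|B^{-1}(\tilde\psi^p(t)-\tilde\psi^p(t'))\|\le\sigma_0\underline b^{-2}p^{C_5}|t-t'|$, whose covering numbers satisfy $N([0,1],d,\varepsilon)\lesssim p^{C_5}/\varepsilon$ and whose diameter is $O(1)$, obtaining the same $\sqrt{\log p}$ bound.
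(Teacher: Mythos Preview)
Your argument is correct and essentially the same as the paper's: both control the Gaussian supremum via chaining against the metric induced by the Lipschitz bound in Condition~\ref{Condition: basis conditions}, the paper through Dudley's entropy integral (exactly the alternative you mention at the end) and you through the equivalent one-step discretization. Your observation that the constant also depends on $\underline{b}$ and $\sigma_{0}$, not only on $C_{5}$, is accurate; the paper's proof implicitly uses Condition~\ref{Condition: basis eigenvalues} in the same way when bounding the diameter $\theta$.
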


\begin{proof}
From Dudley's entropy integral (e.g., see Corollary 2.2.8 in \cite{vanderVaartandWellner}), we have
\begin{align*}
	\Ep [ &\|\tilde{\psi}^{p}(\cdot)^{\top}B^{-1/2}N_{(p)} \|_{\infty} ] 
	\\
	&\leq
	\Ep [ | \tilde{\psi}^{p}(0)^{\top}B^{-1/2}N_{(p)} |  ]
	+
	\Ep [ \sup_{t \neq t' \in [0,1]} | \tilde{\psi}^{p}(t)^{\top}B^{-1/2}N_{(p)} - \tilde{\psi}^{p}(t')^{\top}B^{-1/2}N_{(p)}  | ]
	\\
	&\leq \underline{b} +  \int_{0}^{\theta} \sqrt{\log N([0,1], d_{X}, \delta )} d \delta,
\end{align*}
where
$N([0,1], d_{X}, \delta)$ is a $\delta$-covering number of $[0,1]$ with respect to
\[d_{X}(t,t') := \{\Ep [ \tilde{\psi}^{p}(t)^{\top}B^{-1/2}N_{(p)} -  \tilde{\psi}^{p}(t')^{\top}B^{-1/2}N_{(p)} ]^{2}\}^{1/2}\]
and
$\theta:= \sup_{t\in [0,1]} d_{X}(t,0)$.
Since $\theta$ is bounded by $2\underline{b}$, we have
\[
	\int_{0}^{\theta} \sqrt{\log N([0,1], d_{X}, \delta )} d \delta
	\leq \int_{0}^{2\underline{b}} \sqrt{\log N([0,1] , d_{X}, \delta  )} d\delta.
\]
From the bound on covering numbers of functions Lipschitz in one parameter
(e.g., see Theorem 2.7.11 in \cite{vanderVaartandWellner}),
we have $N ( [0,1] , d_{X} , \delta ) \leq \left( \tilde{c}_{2} p^{C_{5}} / \delta \right)$
for some $\tilde{c}_{2}>0$.
This gives
\begin{align*}
\int_{0}^{2\underline{b}} \sqrt{\log N([0,1] , d_{X}, \delta  )} d\delta
\leq \sqrt{C_{5} \log p} + \int_{0}^{2\underline{b}}\sqrt{\log (\tilde{c}_{2}\underline{b} / \delta)}d\delta.
\end{align*}
Thus, we obtain the desired inequality.
\end{proof}

\begin{lemma}
\label{lemma: Anti-concentration of Gaussian process suprema}
Under Conditions 3.3-3.4,
there exists a positive constant $\tilde{c}_{1}$ not depending on $n$ and $p$ for which
we have
\begin{align*}
	\sup_{x\in \R}  \Pr\big{(} \big{|} \| \tilde{\psi}^{p}(\cdot)^{\top}B^{-1/2}N_{(p)} \|_{\infty}- x \big{|} \leq R \big{)} \leq \tilde{c}_{1} R \sqrt{\log p}, \ R>0.
\end{align*}
\end{lemma}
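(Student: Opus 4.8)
The plan is to recognize the left-hand quantity as the supremum of (the absolute value of) a separable centered Gaussian process on $[0,1]$ and to apply an anti-concentration bound for such suprema, combined with the expectation bound of Lemma \ref{lemma: Expectation of suprema}. Concretely, set $G(t):=\tilde{\psi}^{p}(t)^{\top}B^{-1/2}N_{(p)}$ for $t\in[0,1]$. Under Condition \ref{Condition: basis conditions} the map $\tilde{\psi}^{p}$ is Lipschitz, hence continuous, so $G$ has continuous sample paths and is in particular separable, and $\|\tilde{\psi}^{p}(\cdot)^{\top}B^{-1/2}N_{(p)}\|_{\infty}=\sup_{t\in[0,1]}|G(t)|$. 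Because $\|\tilde{\psi}^{p}(t)\|=1$ and the eigenvalues of $B$ lie in $[\underline{b}^{2},\overline{b}^{2}]$ by Condition \ref{Condition: basis eigenvalues}, the variance $\Ep[G(t)^{2}]=\sigma_{0}^{2}\,\tilde{\psi}^{p}(t)^{\top}B^{-1}\tilde{\psi}^{p}(t)$ satisfies $\sigma_{0}^{2}/\overline{b}^{2}\le\Ep[G(t)^{2}]\le\sigma_{0}^{2}/\underline{b}^{2}$ for every $t$; thus the pointwise standard deviation of $G$, and of $-G$, is bounded above and below by positive constants depending only on $\sigma_{0},\underline{b},\overline{b}$.

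Next I would apply the anti-concentration inequality for suprema of separable Gaussian processes from \cite{CCK(2014a)} --- the continuous-index analogue of Nazarov's inequality (Lemma \ref{lemma: anti-concentration}) --- to the symmetrized process $\{G(t):t\in[0,1]\}\cup\{-G(t):t\in[0,1]\}$, whose supremum equals $\sup_{t}|G(t)|$. This yields $\sup_{x\in\R}\Pr\bigl(\bigl|\sup_{t}|G(t)|-x\bigr|\le R\bigr)\le \tilde{c}\,R\,(\Ep[\sup_{t}|G(t)|]+1)$ for $R>0$, with $\tilde{c}$ depending only on the variance bounds above, hence only on $\sigma_{0},\underline{b},\overline{b}$. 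Finally, by Lemma \ref{lemma: Expectation of suprema} --- whose proof, via Dudley's entropy integral and Condition \ref{Condition: basis conditions}, in fact bounds $\Ep\|\tilde{\psi}^{p}(\cdot)^{\top}B^{-1/2}N_{(p)}\|_{\infty}$ directly --- we have $\Ep[\sup_{t}|G(t)|]\le \tilde{c}_{2}\sqrt{\log p}$, so $\Ep[\sup_{t}|G(t)|]+1\le(\tilde{c}_{2}+1)\sqrt{\log p}$ for $p\ge 2$; combining the two displays gives the claimed bound with $\tilde{c}_{1}=\tilde{c}(\tilde{c}_{2}+1)$.

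The main obstacle is justifying the anti-concentration estimate at the level of the continuous-index supremum rather than of a finite maximum: Nazarov's bound (Lemma \ref{lemma: anti-concentration}) degrades like $\sqrt{\log(\mathrm{cardinality})}$, so one cannot merely discretize $[0,1]$ and pass to the limit. The argument of \cite{CCK(2014a)} circumvents this by discretizing $[0,1]$ on a grid whose cardinality is polynomial in $p$ --- small enough that Nazarov still yields the $\sqrt{\log p}$ rate --- and controlling the approximation error $\sup_{|t-t'|\le\delta}|G(t)-G(t')|$ by the Gaussian concentration (Borell--TIS) inequality, using that the $\delta$-entropy of $[0,1]$ under the intrinsic pseudometric of $G$ is $O(\log p)$ by the Lipschitz bound in Condition \ref{Condition: basis conditions}; balancing the grid size against the modulus of continuity produces the uniform bound. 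Since this balancing is precisely the content of the cited result, I would quote it rather than reproduce it.
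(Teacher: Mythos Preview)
Your proposal is correct and follows essentially the same approach as the paper: apply the anti-concentration inequality for suprema of separable Gaussian processes (Theorem~2.1 in \cite{CCK(2014a)}) to obtain a bound of the form $\tilde{c}\,R\,\Ep[\|\tilde{\psi}^{p}(\cdot)^{\top}B^{-1/2}N_{(p)}\|_{\infty}]$ (up to additive constants), and then invoke Lemma~\ref{lemma: Expectation of suprema} to replace the expectation by $\sqrt{\log p}$. The paper's proof is two lines quoting exactly these two ingredients; your additional discussion of separability, the variance bounds from Condition~\ref{Condition: basis eigenvalues}, and the reason one cannot simply discretize and pass to the limit in Nazarov's inequality is accurate elaboration of what is needed to legitimately cite the CCK result, but not a different route.
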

\begin{proof}
	From Theorem 2.1 in \cite{CCK(2014a)}, we have
	\begin{align*}
		\sup_{x\in \R}  \Pr\big{(} \big{|} \| \tilde{\psi}^{p}(\cdot)^{\top}B^{-1/2}N_{(p)} \|_{\infty}- x \big{|} \leq R \big{)} 
		\leq \tilde{c}_{1} R \Ep [ \|\tilde{\psi}^{p}(\cdot)^{\top} B^{-1/2}N_{(p)} \|_{\infty} ]
	\end{align*}
	and
	thus from Lemma \ref{lemma: Expectation of suprema}, we complete the proof.
\end{proof}

\subsubsection{Proof of Proposition 3.5 }

We only prove the theorem under Condition 2.5 (a).
Although the proof is not a direct consequence of Theorem 2.1,
we can follow the same line as the proof of Theorem 2.1.

\noindent\textit{Step 1: Modification of the test set}

\noindent We start with modifying the test set $H$ that covers the randomness of the design.
Take $c_{1}>0$ sufficiently large.
Modify the test set
\[H = \{Y: \|X(\hat{\beta}(Y) - \beta_{0}) \| \leq c_{1}\sqrt{p \log n} \} \cap \{Y:  \Pi_{\sigma^{2}}(|\sigma^{2}/\sigma_{0}^{2}-1 | \geq \delta_{1} \mid Y) \leq \delta_{2} \}\]
in Proposition 2.5
as 
\begin{align*}
	H:=\{(X,Y) :& \|X(\hat{\beta}(Y) - \beta_{0} )\| \leq c_{1}\sqrt{p\log n} , (\underline{b}/2)^{2}I_{p} \preceq  X^{\top}X / n \preceq ( 2 \overline{b})^{2} I_{p} \}
\\
&\cap \{(X,Y) : \Pi_{\sigma^{2}}(|\sigma^{2}/\sigma_{0}^{2}-1| \geq \delta_{1}\mid Y)\leq \delta_{2} \}.
\end{align*}
We bound $\Pr ((X,Y)\not\in H)$ as follows:
\begin{align*}
	\Pr ( (X,Y) \not\in H)
	\leq &
	A_{1} + A_{2} + A_{3} + \delta_{3},
\end{align*}
where 
\begin{align*}
	A_{1}  & := \Pr (\|X(X^{\top}X)^{-1}X^{\top}\varepsilon \| \geq {c}_{1} \sqrt{p \log n} /2 , \  (\underline{b}/2)^2 I_{p} \preceq X^{\top}X / n \preceq ( 2\overline{b})^{2} I_{p} ),\\
	A_{2} &  := \Pr (\| X(X^{\top}X)^{-1}X^{\top}r \| \geq c_{1} \sqrt{p \log n} /2 ), \\
	A_{3} &  := \Pr ( X \not\in \{X: (\underline{b}/2)^2 I_{p} \preceq (X^{\top}X) / n  \preceq (2\overline{b})^{2} I_{p} \} ).
\end{align*}
Lemma \ref{lemma: exponential inequality for quadratic forms} gives
$A_{1} \leq \tilde{c}_{1} n^{-\tilde{c}_{2} p}$ for some $\tilde{c}_{1},\tilde{c}_{2}>0$.
Markov's inequality gives
\begin{align*}
	A_{2} &\leq  \frac{\Ep[r^{\top}X(X^{\top}X)^{-1}X^{\top}r] }{p\log n}  \leq \frac{n}{\log n}\frac{\tau^{2}_{2}}{p}.
\end{align*}
Lemma \ref{lemma: Matrix Chernoff} gives
$A_{3} \leq \tilde{c}_{1} n^{-\tilde{c}_{2} p}$.

\noindent\textit{Step 2: Upper bound on the coverage error}

\noindent We start with proving that $\hat{R}_{\alpha}$ concentrates on the $(1-\alpha)$-quantile of some distribution with high probability.
Let $\overline{\zeta}:= \tilde{\phi}_{\Pi_{\beta}} (c_{1} \sqrt{p\log n}) + c_{1}\delta_{1}p\log n + \delta_{2} + \delta_{3}
+c_{1}n^{-c_{2} p}$
with the constant $c_{2}$ in Proposition 2.5.
From Proposition 2.5, we have
\begin{align*}
	\big{|}& \Pi_{\beta}\{ \|\tilde{\psi}^{p} (\cdot)^{\top} (\hat{\beta} - \beta_{0}) \|_{\infty} \leq \hat{R}_{\alpha} \mid Y , X\}
	- \Pr ( \| \tilde{\psi}^{p}(\cdot)^{\top} (X^{\top}X)^{-1}X^{\top}N_{(n)} \|_{\infty} \leq \hat{R}_{\alpha} \mid Y , X ) \big{|}
	\\
	&\leq 
	\overline{\zeta} \text{ for }(X,Y)\in H.
\end{align*}
Letting $G$ be the distribution function of 
$\| \tilde{\psi}^{p}(\cdot)^{\top} (X^{\top}X)^{-1}X^{\top} N_{(n)} \|_{\infty}$
and letting $G^{-1}$ be its quantile function,
we have
\[\hat{R}_{\alpha}\leq G^{-1}(1-\alpha + \overline{\zeta}) \text{ for $(X,Y) \in H$}.\]

Next we bound the Kolmogorov distances between 
$\| \tilde{\psi}^{p} (\cdot)^{\top} (\hat{\beta}-\beta_{0}) \|_{\infty} $
and $\sqrt{n} \| \tilde{\psi}^{p} (\cdot)^{\top} \allowbreak B^{-1/2}N_{(p)} \|_{\infty}$;
between
$\| \tilde{\psi}^{p} (\cdot)^{\top} (X^{\top}X)^{-1}X^{\top} N_{(n)} \|_{\infty}$
and
$\sqrt{n} \| \tilde{\psi}^{p} (\cdot)^{\top} B^{-1/2}N_{(p)} \|_{\infty}$:
\begin{align*}
	\rho_{1}:=& \sup_{R>0} \big{|} \Pr( \|\tilde{\psi}^{p}(\cdot)^{\top} \sqrt{n} (\hat{\beta}-\beta_{0}) \|_{\infty} \leq R) 
	- \Pr( \| \tilde{\psi}^{p}(\cdot)^{\top} B^{-1/2}N_{(p)} \|_{\infty} \leq R) \big{|}, \\
\rho_{2}:=&
	\sup_{R>0} \big{|} \Pr( \| \tilde{\psi}^{p}(\cdot)^{\top} \sqrt{n} (X^{\top}X)^{-1}X^{\top}N \|_{\infty} \leq R) 
	- \Pr( \|\tilde{\psi}^{p}(\cdot)^{\top} B^{-1/2}N_{(p)} \|_{\infty} \leq R) \big{|}.
\end{align*}
We also bound the L\'{e}vy concentration function of $\sqrt{n} \| \tilde{\psi}^{p} (\cdot)^{\top} B^{-1/2}N_{(p)} \|_{\infty}$:
\begin{align*}
\gamma(R):=&\sup_{x > 0} \Pr ( | \| \tilde{\psi}^{p}(\cdot)^{\top}B^{-1/2}N_{(p)} \|_{\infty} -x | \leq R ).
\end{align*}
Let $\eta=\eta_{n}$ be an arbitrary divergent sequence.
We present useful inequalities for bounding $\rho_{1}$, $\rho_{2}$, and $\gamma(R)$ ahead.
Let
\begin{align*}
D_{1}&:=\sqrt{n} \bigg{|} \left\|\tilde{\psi}^{p}(\cdot)^{\top} (\hat{\beta} -\beta_{0} ) \right\|_{\infty} - \left\| \tilde{\psi}^{p}(\cdot)^{\top} B^{-1}X^{\top}\varepsilon / n \right\|_{\infty} \bigg{|}, \\
D_{2}&:=\sqrt{n} \bigg{|} \|\tilde{\psi}^{p}(\cdot)^{\top} (X^{\top}X)^{-1}X^{\top}N_{(n)} \|_{\infty} - \|\tilde{\psi}^{p}(\cdot)^{\top} B^{-1}X^{\top} N_{(n)} / n \|_{\infty} \bigg{|},\\
D_{3}&:=\sqrt{n} \bigg{|}
	\left\| \tilde{\psi}^{p}(\cdot)^{\top}B^{-1}X^{\top}\varepsilon / n \right\|_{\infty} 
	- \tilde{Z}  \bigg{|},\\
D_{4}&:=\sqrt{n} \bigg{|} 
	\left\| \tilde{\psi}^{p}(\cdot)^{\top}B^{-1}X^{\top}N_{(n)} / n \right\|_{\infty} 
	- \tilde{Z}  \bigg{|}.
\end{align*}
Then we have, for some $\tilde{c}_{3},\tilde{c}_{4}>0$ independent of $n$ and $p$,
\begin{align}
	&\Pr \bigg{(} D_{1}
	\geq \tilde{c}_{3} \eta \left\{ \left( \xi_{p}^{2} / n \right)^{1/2}
	\sqrt{\log p } (n^{1/q}\sqrt{\log p} + \sqrt{p}\tau_{\infty} )
	+ \sqrt{\log p} \tau_{\infty} \right\}
	\bigg{)}
	\leq  \tilde{c}_{4} / \eta^2,
	\label{ineq 1 with general basis functions}\\
	&\Pr  \bigg{(} D_{2}
	\geq
	\tilde{c}_{3} \eta \left( \xi_{p}^{2} / n \right)^{1/2}
	 n^{1/q}\log p
	\bigg{)}
	\leq  \tilde{c}_{4} / \eta^2,
	\label{ineq 2 with general basis functions}\\
	&\Pr \bigg{(} D_{3}
	\geq
	\tilde{c}_{3} \eta
	\bigg{\{}\bigg{(} \frac{\xi_{p}^{2}}{n}\bigg{)}^{1/2} (n^{1/q} \log n)
	       +
	       \bigg{(} \frac{\xi_{p}^{2}}{n} \bigg{)}^{1/4} (\log n)^{3/4}
	\bigg{\}}
	 +
	\tilde{c}_{3} \eta^{2/3}
		\left( \frac{\xi_{p}^{2}}{n} \right)^{1/6}  (\log n)^{2/3}
	\bigg{)}
	\nonumber\\
	&\leq \tilde{c}_{4} \left(\frac{1}{\eta^{2}} + \frac{\log n}{n}\right),
	\label{ineq 3 with general basis functions}
\end{align}
and
\begin{align}
&\Pr \bigg{(} D_{4} 
	\geq 
	\tilde{c}_{3} \eta
	\bigg{\{}\bigg{(} \frac{\xi_{p}^{2}}{n} \bigg{)}^{1/2} (n^{1/q} \log n)
	       +
	       \bigg{(} \frac{\xi_{p}^{2}}{n} \bigg{)}^{1/4} (\log n)^{3/4}
	\bigg{\}}
        +
	\tilde{c}_{3} \eta^{2/3}     
		\left( \frac{\xi_{p}^{2}}{n} \right)^{1/6}  (\log n)^{2/3}
	\bigg{)}
	\nonumber\\
	&\leq \tilde{c}_{4} \left(\frac{1}{\eta^{2}} + \frac{\log n}{n}\right),
	\label{ineq 4 with general basis functions}
\end{align}
where the first two inequalities follows from Lemma \ref{lemma: Uniform linearization}
and the last two inequalities follows from Lemma \ref{lemma: Gaussian approximation of linear statistics}.
From inequalities (\ref{ineq 1 with general basis functions}) and (\ref{ineq 3 with general basis functions})
and
from Lemma \ref{lemma: Anti-concentration of Gaussian process suprema},
we have 
\begin{align}
	\rho_{1} \leq & 
	\tilde{c}_{5} (A_{4} + A_{5}),
	\label{Berry--Esseen type 1 with general basis functions}
\end{align}
for some $\tilde{c}_{5}>0$,
where 
\begin{align*}
	A_{4} &:= \frac{1}{\eta^2} + \frac{\log n}{n}
	 + \eta (\log p)^{1/2}
	 \max\left\{ \left( \xi_{p}^{2} / n \right)^{1/2}n^{1/q}\log n ,
	 \left( \xi_{p}^{2} / n \right)^{1/6} (\log n)^{2/3} \right\} \text{ and }
	\nonumber\\
	A_{5} &:= \eta (\log p) \tau_{\infty}
	\max\left\{1 , \left( p \xi_{p}^{2} / n \right)^{1/2} \right\}.
\end{align*}
Likewise, 
from inequalities (\ref{ineq 2 with general basis functions}) and (\ref{ineq 4 with general basis functions}),
and from Lemma \ref{lemma: Anti-concentration of Gaussian process suprema},
we have 
\begin{align*}\rho_{2} \leq \tilde{c}_{5} A_{4}\end{align*}
for some $\tilde{c}_{5}>0$.
From Lemma \ref{lemma: Anti-concentration of Gaussian process suprema},
we have, for some $\tilde{c}_{5}>0$,
\begin{align}
	\gamma(R) \leq \tilde{c}_{5} R \sqrt{\log p}.
	\label{Anti-concentration bound with general basis functions}
\end{align}

Finally,
we have
\begin{align*}
	&\Pr(f_{0} \in \mathcal{C}(\hat{f},\hat{R}_{\alpha}) ) - (1-\alpha)\\
	&\leq \Pr \{\| \tilde{\psi}^{p}(\cdot)^{\top}(\hat{\beta}-\beta_{0}) \|_{\infty} 
\leq  G^{-1}(1-\alpha+ \overline{\zeta} ) + \tau \} - (1-\alpha) + \Pr\{ (X,Y) \not\in H\}
	\nonumber\\
	&\leq \Pr \{ \| \tilde{\psi}^{p}(\cdot)^{\top} B^{-1/2}N_{(p)} /\sqrt{n} \|_{\infty} 
\leq G^{-1}(1-\alpha+\overline{\zeta}) + \tau \} - (1-\alpha) + \rho_{1} + \Pr\{ (X,Y) \not\in H\}
	\nonumber\\
	&\leq
	\Pr \{ \| \tilde{\psi}^{p}(\cdot)^{\top}B^{-1/2}N_{(p)} / \sqrt{n} \|_{\infty} 
	\leq G^{-1}(1-\alpha+\overline{\zeta} ) \} - (1-\alpha)
	\nonumber\\
	&\qquad + \gamma(\sqrt{n} \tau) + \rho_{1} + \Pr\{ (X,Y) \not\in H\}
	\nonumber\\
	&\leq
	\overline{\zeta} + \rho_{1}+\rho_{2} + \gamma(\sqrt{n} \tau)+ \Pr( (X,Y) \not\in H),
\end{align*}
and thus
from (\ref{Berry--Esseen type 1 with general basis functions})-(\ref{Anti-concentration bound with general basis functions}),
taking $\eta = n^{\delta}$,
we obtain the desired upper bound of $\Pr(f_{0} \in \mathcal{C}(\hat{f},\hat{R}_{\alpha}) ) - (1-\alpha)$.
Likewise, we obtain the desired lower bound of $\Pr(f_{0} \in \mathcal{C}(\hat{f},\hat{R}_{\alpha})) - (1-\alpha)$,
which completes Step 2.

\noindent\textit{Step 3: Upper bound on the $L^{\infty}$-diameter}

\noindent We will show that $G^{-1}( 1 - \alpha + \overline{\zeta}) \leq \tilde{c}_{6} \sqrt{ (\log p) / n}$ for some $\tilde{c}_{6}>0$.
From the concentration inequality for the suprema of the Gaussian process,
and
from Lemma \ref{lemma: Expectation of suprema},
we have, for sufficiently large $\tilde{c}_{7}>0$ depending only on $\alpha$ and $\underline{b}$,
\[
\Pr( \| \tilde{\psi}^{p}(\cdot)^{\top}B^{-1/2}N_{(p)} \|_{\infty} - \Ep[ \| \tilde{\psi}^{p}(\cdot)^{\top}B^{-1/2}N_{(p)} \|_{\infty}] \geq \tilde{c}_{7} \sqrt{\log p} )
\leq \alpha - \overline{\zeta} - \rho_{2}.
\]
Observing
\begin{align*}
	G^{-1}( 1 - \alpha + \overline{\zeta})
	&:=\inf\{ R : \Pr( \| \tilde{\psi}^{p}(\cdot)^{\top}(X^{\top}X)^{-1}X^{\top}N_{(n)} \|_{\infty} \geq R ) \leq \alpha - \overline{\zeta} \}
	\nonumber\\
	&\leq
	\inf\{ R : \Pr( \| \tilde{\psi}^{p}(\cdot)^{\top}B^{-1/2}N_{(p)} / \sqrt{n} \|_{\infty} \geq R ) \leq \alpha - \overline{\zeta} - \rho_{2}\}
	\nonumber\\
	&=
	\inf\bigg{\{} R : 
		\Pr\bigg{(} \| \tilde{\psi}^{p}(\cdot)^{\top}B^{-1/2}N_{(p)} \|_{\infty} / \sqrt{n} 
		- \Ep[ \| \tilde{\psi}^{p}(\cdot)^{\top}B^{-1/2}N_{(p)} /\sqrt{n} \|_{\infty}] 
	\nonumber\\
	& \qquad\qquad\qquad \geq R - \Ep[\| \tilde{\psi}^{p}(\cdot)^{\top}B^{-1/2}N_{(p)} / \sqrt{n} \|_{\infty} ]  \bigg{)}
	\leq \alpha - \overline{\zeta} -\rho_{2} \bigg{\}},
\end{align*}
we have $G^{-1} (1-\alpha +\overline{\zeta})  \lesssim \sqrt{(\log p) /n}$
and thus we complete the proof.
\qed

\subsubsection{Proof of the bound on $\tau$}

We will show that $\tau \lesssim \tau_{\infty}/\sqrt{p}$ for periodic $S\geq 2$-regular wavelets.
Consider a wavelet pair $(\phi,\psi)$ satisfying the following three assumptions:
\begin{itemize}
\item There exists an integer $N$ for which the support of $\phi$ is included in $[0,N]$ and the support of $\psi$ is included in $[-N+1,N]$;
\item $\phi$ and $\psi$ are $C^{S}[0,1]$;
\item The inequality $\inf_{x \in \R} \sum_{k \in \mathbb{Z}} \{\psi(x-k)\}^{2} > 0$ holds.
\end{itemize}
We periodize the pair $(\phi,\psi)$ as follows:
\begin{align*}
\phi^{(\mathrm{per})}_{l,k} (t) := \sum_{m\in\mathbb{Z}} 2^{l/2} \phi ( 2^{l} t + 2^{l} m - k) \ \text{ and }
\psi^{(\mathrm{per})}_{l,k} (t) := \sum_{m\in\mathbb{Z}} 2^{l/2} \psi ( 2^{l} t + 2^{l} m - k)
\end{align*}
for $k=0,\ldots, 2^{l}-1$ and $l=1,\ldots,J$.
With $J_{0}$ such that $2^{J_{0}}\geq 2N$,
$\{ \phi^{(\mathrm{per})}_{J_{0}, k} : k=0, \ldots, 2^{J_{0}}-1 \} \cup \{ \psi^{(\mathrm{per})}_{l,k} : k=0 , \ldots , 2^{l}-1 , l=J_{0}, \ldots, J\}$
forms $p=2^{J}$ basis functions based on periodic $S$-regular wavelets.

It suffices to show that $\inf_{t\in [0,1]} \| \psi^{p} (t) \| \gtrsim \sqrt{p}$.
Since $2^{J}> 2N$ and since the support of $\psi$ is included in $[-N+1 , N ]$,
we have
\begin{align*}
	\|\psi^{p}(t)\|^2 \geq 2^{J} \sum_{k=0}^{2^{J}-1} \left\{ \sum_{m\in \mathbb{Z}} \psi(2^{J}t + 2^{J}m - k) \right\}^{2}
	= 2^{J} \sum_{k=0}^{2^{J}-1} \sum_{m\in\mathbb{Z}} \{\psi(2^{J}t + 2^{J}m - k )\}^{2}
\end{align*}
and 
\begin{align*}
2^{J} \sum_{k=0}^{2^{J}-1} \sum_{m\in\mathbb{Z}} \{\psi(2^{J}t + 2^{J}m - k )\}^{2}
\geq
2^{J} \inf_{ x  \in  \R}  \sum_{k  \in  \mathbb{Z}}  \{  \psi  (  x - k )  \}^{2}.
\end{align*}
Thus we complete the proof.

\end{document}